\newcommand\setItemnumber[1]{\setcounter{enum\romannumeral\@enumdepth}{\numexpr#1-1\relax}}
\newcommand{\p}{\mathbb{P}}
\DeclareMathOperator{\PGL}{PGL}
\DeclareMathOperator{\SL}{SL}
\DeclareMathOperator{\RR}{\mathbb{R}}
\DeclareMathOperator{\ZZ}{\mathbb{Z}}
\DeclareMathOperator{\CC}{\mathbb{C}}
\DeclareMathOperator{\Q}{\mathscr{Q}}
\DeclareMathOperator{\X}{\mathscr{X}}
\DeclareMathOperator{\Y}{\mathscr{Y}}
\DeclareMathOperator{\I}{\mathcal{I}}
\DeclareMathOperator{\Ind}{Ind}
\DeclareMathOperator{\Exc}{Exc}
\newcommand{\tC}{\widetilde{C}}
\newcommand{\tG}{\widetilde{\Gamma}}
\newcommand{\tsi}{\widetilde{\sigma}}
\newcommand{\xdashrightarrow}[2][]{\ext@arrow 0359\rightarrowfill@@{#1}{#2}}
\newtheorem{theorem}[equation]{Theorem}
\newtheorem{lemma}[equation]{Lemma}
\newtheorem{proposition}[equation]{Proposition}
\newtheorem{corollary}[equation]{Corollary}
\theoremstyle{definition}
\newtheorem{notation}[equation]{Notation}
\newtheorem{condition}{Condition}
\title{Regularizations of positive entropy pseudo-automorphisms}
\author{Alexandra Kuznetsova}
\address{
National research university Higher School of economics, Russia, Usacheva str. 6, 119048;
\'Ecole Polytechnique, France, CMLS, Route de Saclay, 91128 Palaiseau.}
 \email{sasha.kuznetsova.57@gmail.com}
\begin{document}
\begin{abstract}
  We study positive entropy birational automorphisms of threefolds. 
  We identify some conditions which imply that such an automorphism is non-regularizable.  
  We show that this criterion applies in the example of a positive entropy birational 
  automorphism of $\p^3$ constructed in \cite{Blanc_Pseudo}, thus showing that for a 
  general choice of parameters it is non-regularizable. Additionally, we establish a criterion 
  which proves that the automorphism in this example does not preserve a structure of a
  fibration over a surface.
\end{abstract}

 \maketitle

\section{Introduction}

Let $X$ be any normal projective variety defined over an algebraically closed field $k$ of characteristic~$0$.
We shall say that a birational automorphism $\varphi\colon X\dashrightarrow X$ is regularizable if 
there exists a birational 
map~\mbox{$\alpha\colon X\dashrightarrow Y$} to a variety $Y$ such that $\psi = \alpha\circ\varphi\circ\alpha^{-1}$ is a regular 
automorphism of $Y$. Understanding whether a given birational self-map is regularizable or not is a delicate problem.

We note immediately that any birational self-map of a curve, or of a variety of general type is regularizable. 
Any map  of finite order is also regularizable (see, e.g., \cite[ Lemma-Definition 3.1]{PS-regularization}).


The question whether one can regularize a birational automorphism $\varphi\colon X\dashrightarrow X$ of infinite order becames increasingly difficult
when dimension of $X$ grows. The case of surfaces is now well-understood and precise criteria have been devised in \cite{Diller_Favre} and \cite{Blanc-Cantat}
which are all based on the growth of degrees of the iterates of $\varphi$. 
Since the degree growth also plays a key role in all criteria that are known in higher dimension, we recall some basic properties of such.

Let us fix any ample line bundle $H$ over $X$. The $i$-th degree of $\varphi$ for $0\leqslant i\leqslant d$ is defined as the following number:
\begin{equation*}
 \deg_i(\varphi) = (\varphi)^*(H^i)\cdot H^{d - i}.
\end{equation*}
The growth of degrees $\deg_i(\varphi^n)$ is a birational invariant by \cite{Dinh_Sibony}. 
Moreover one can prove for all $0\leqslant i\leqslant d$, that the sequence $\deg_i(\varphi^n)$ for all $n>0$ is essentially submultiplicative;
thus, we can define the $i$-th dynamical degree of $\varphi$:
\begin{equation*}
 \lambda_i(\varphi) = \lim_{n\to \infty} \left(\deg_i(\varphi^n)\right)^{\frac{1}{n}}.
\end{equation*}
These numbers are birational invariants, they are real and satisfy $\lambda_i(\varphi)\geqslant 1$ for each $1\leqslant i\leqslant d-1$.
In case of birational automorphisms we have $\lambda_0(\varphi) = \lambda_d(\varphi) = 1$. 
The sequence of real numbers $\lambda_1(\varphi),\dots,\lambda_{d-1}(\varphi)$ is log-concave; i.e. we have the following inequalities
for all~\mbox{$1\leqslant i\leqslant d-1$:}
\begin{equation*}\label{eq_log_concavity}
 \lambda_{i-1}(\varphi)\cdot \lambda_{i+1}(\varphi)\leqslant \lambda_i(\varphi)^2.
\end{equation*}
In the sequel we shall use the following convenient terminology: $\varphi$ has positive entropy iff $\lambda_1(\varphi) >1$. 
Note that by log-concavity, the latter condition is equivalent to $\lambda_i(\varphi) >1$ 
for any $i = 1, \dots, d-1$.
For any birational automorphism $\varphi\colon X\dashrightarrow X$ of a variety $X$  
the growth of degrees $\deg_i(\varphi^n)$ and the dynamical degrees $\lambda_i(\varphi)$  give strong constraints on the possibility for~$\varphi$ to be regularizable.

We now list all currently known criteria ensuring regularizability or non-regularizability.
\begin{itemize}
 \item If $X$ is a surface and $\lambda_1(\varphi) = 1$, then $\varphi$ is is regularizable if and only if $\deg_1(\varphi^n)$ is bounded 
 or~\mbox{$\deg_1(\varphi^n)\asymp n^2$} by \cite{Diller_Favre}. If  $\lambda_1(\varphi)>1$ is a Salem number, then $\varphi$ is regularizable by~\cite{Blanc-Cantat}.
 \item If $\deg_1(\varphi^n)$ is bounded and $X$ is any variety then $\varphi$ can be regularized by \cite{Weil_reg}.
 \item If $\deg_1(\varphi^n)\asymp n^d$ where $d$ is an odd integer and $X$ is any variety then $\varphi$ can not be regularized by~\cite[Proposition 1.2]{Cantat_Deserti_Xie}.

 \item If $X$ is any algebraic variety and $\varphi$ is regularizable, then $\lambda_1(\varphi)$ is an algebraic integer. 
 However, if $\lambda_1(\varphi)$ lies in $\ZZ$
 and greater than $1$ then the birational automorphism~$\varphi$
 is not regularizable by  \cite[Proposition 1.1]{Cantat_Deserti_Xie}. In case when the dimension of $X$ equals 3 there are more restrictions 
 on the algebraic number $\lambda_1(\varphi)$ for a
 regularizable automorphism $\varphi$ which are listed in~\mbox{\cite[Proposition 4.6.7, 4.7.2, 5.0.1]{Lo_Bianco_kahler_threefolds}.} 
 
 \item If $X$ is a projective space $\p^d$ over a field $k$ of characteristic $0$ and $\varphi\colon \p^d \dashrightarrow \p^d$ is a birational automorphism, 
 then by \cite[Corollary 1.7]{Cantat_Deserti_Xie} one can find a linear transformation $A\in\PGL_{d+1}(k)$ such that $A\circ f$ is not regularizable. 
 Moreover, by \cite[Theorem 1.5]{Cantat_Deserti_Xie} the set of such $A$ is Zariski dense if $k$ is uncountable. Thus, a very general birational automorphism of $\p^d$ 
 over an uncountable field is not regularizable.
\end{itemize}


In this paper we explore the problem or regularization in the case when $X$ is a smooth  threefold and $\varphi\colon X\dashrightarrow X$ 
is a pseudo-automorphism such that~\mbox{$\lambda_1(\varphi)>1$}. 
Recall that a birational map is a pseudo-automorphism if $\varphi$ and $\varphi^{-1}$ does not contract any divisor in $X$. Note that in case of surfaces any 
pseudo-automorphism can be extended to a regular automorphism. Thus, pseudo-automorphisms form a class of birational automorphisms which are very 
close to being regular. One might expect that any pseudo-automorphism can be regularized; however, it turns out to be false. There are several examples of 
non-regularizable pseudo-automorphisms, e.g. \cite{Bedford-Kim_ex} and \cite{Bedford_Cantat_Kim}.
%
%
%

The easiest construction of an automorphism $\varphi$ of threefold $X$ with $\lambda_1(\varphi)>1$
is to take~\mbox{$X = S\times C$,} where $S$ is a surface admitting a positive entropy automorphism $f$ and $C$ is a curve. Then for 
any automorphism $g$ of $C$ the product $f\times g$ induces a positive entropy automorphism of $X$. 

However, we are mostly interested in 
automorphisms of varieties which can not be induced from lower dimensions.  To avoid such situations
we shall look at \emph{primitive} pseudo-automorphisms $\varphi\colon X \dashrightarrow X$, i.e. such that for any rational dominant map $\pi\colon X\to B$ 
to a variety $B$ of dimension $1\leqslant \dim(B)\leqslant \dim(X)- 1$ and any birational automorphism $f$ of $B$ we have inequality $\pi\circ\varphi \ne f\circ\pi$.

An interesting property \cite{Lesieutre_constraints} of a regular positive entropy automorphism $\varphi\colon X\to X$ of a smooth variety $X$ of dimension $3$ 
is that if $\delta\colon X\to Y$ is an extremal contraction, then either $\varphi$ induces a regular automorphism on $Y$ or $\varphi$ is not primitive.
This implies in particular that a positive entropy regular automorphism can not be constructed on any sequence of blow-ups with smooth centers of a 
smooth Fano variety. 


We know several constructions of  positive entropy primitive pseudo-automorphisms of rational 
threefolds:~\mbox{\cite{Oguiso-Truong},  \cite{Bedford-Kim_Pseudo}, \cite{Bedford_Cantat_Kim},
\cite{Dolgachev_Ortland}, etc.}. The example in \cite{Oguiso-Truong} is the only 
known example of regular positive entropy primitive automorphism of a rational threefold. For examples  \cite{Bedford-Kim_Pseudo} and \cite{Bedford_Cantat_Kim} 
there was proved that they are not regularizable. In these papers authors proved that if $\varphi\colon X\dashrightarrow X$ is a 
birational automorphism and there is a $\varphi$-invariant surface $S$ in $X$ such that 
$\varphi|_S$ induces a non-regularizable birational automorphism of $S$, then $\varphi$ is also non-regularizable. 
Thus, they find an invariant  surfaces and applied criteria for positive entropy birational automorphisms for surfaces.

Our result is a new criterion of non-regularizibility of pseudo-automorphisms of threefolds.

\begin{condition}\label{condition}
 We say that a pseudo-automorphism $\varphi\colon X\dashrightarrow X$ of a smooth projective threefold~$X$ satisfies Condition \ref{condition}
 if
 \begin{itemize}
  \item[{\textup{(1)}}] $\lambda_1(\varphi)^2> \lambda_2(\varphi);$
  \item[{\textup{(2)}}]  there exists a curve $C$ such that $\theta_1(\varphi)\cdot [C] <0;$
  \item[{\textup{(3)}}]  there exists infinitely many integers $m>0$ such that $C\not\subset \Ind(\varphi^{-m})$.
 \end{itemize}
\end{condition}
Some comments are in order. It is a theorem due to \cite{Truong} that the property $(1)$ implies the existence of a unique class $\theta_1(\varphi)$ 
in the group of classes of divisors $N^1(X)$
which satisfies the property $\varphi^*\theta_1(\varphi) = \lambda_1(\varphi)\theta_1(\varphi)$. Property $(2)$ implies that the class $\theta_1(\varphi)$ is 
not nef and the last property is required to avoid transformations obtained after flopping a curve with infinite orbit (see Section \ref{sect_examples}).

%
%
%

\begin{theorem}\label{thm_criterion}
 Assume that $\varphi\colon X\dashrightarrow X$ is a pseudo-automorphism of a smooth projective threefold~$X$ 
 which satisfies Condition \textup{\ref{condition}}. Then $\varphi$ is not regularizable.
\end{theorem}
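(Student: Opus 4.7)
The plan is to argue by contradiction: assume $\alpha:X\dashrightarrow Y$ is birational with $\psi := \alpha\circ\varphi\circ\alpha^{-1}:Y\to Y$ a regular automorphism (one may take $Y$ smooth by $\psi$-equivariant resolution), and derive a violation of condition~(2).

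First I would establish that $\theta_1(\psi)$ is nef on $Y$: since $\psi$ is regular, the iterated pullbacks $(\psi^n)^*H$ of an ample class $H$ are all nef, and a subsequential limit of $\lambda_1^{-n}(\psi^n)^*H$ yields a nonzero nef $\lambda_1$-eigenvector of $\psi^*$; by birational invariance of the dynamical degrees $\psi$ still satisfies condition (1), and Truong's uniqueness identifies this limit with $\theta_1(\psi)$ up to a positive scalar. Next I would pass to a smooth common resolution $Z$ with birational morphisms $p:Z\to X$ and $q:Z\to Y$ such that $\alpha = q\circ p^{-1}$, and let $F = p^{-1}\varphi p = q^{-1}\psi q$ be the induced birational self-map of $Z$. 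The map $F$ still satisfies condition (1), so Truong's theorem gives a one-dimensional $\lambda_1$-eigenspace of $F^*$ in $N^1(Z)$. Using the identities $\varphi\circ p=p\circ F$ and $\psi\circ q=q\circ F$, both $p^*\theta_1(\varphi)$ and $q^*\theta_1(\psi)$ are nonzero pseudo-effective $\lambda_1$-eigenvectors of $F^*$, so they must agree up to a positive scalar:
\[
p^*\theta_1(\varphi) \;=\; c\cdot q^*\theta_1(\psi),\qquad c>0.
\]
Since $q$ is a morphism and $\theta_1(\psi)$ is nef on $Y$, the right-hand side is nef on $Z$; therefore $p^*\theta_1(\varphi)$ itself is nef on $Z$.

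Then I would exhibit a curve on $Z$ against which this nef class pairs negatively. If $C\not\subset\Ind(\alpha)$, its proper transform $\tilde C\subset Z$ satisfies $p_*[\tilde C]=[C]$, and the projection formula yields
\[
0 \;\le\; p^*\theta_1(\varphi)\cdot\tilde C \;=\; \theta_1(\varphi)\cdot C,
\]
contradicting condition (2). In the remaining case $C\subset\Ind(\alpha)$ I would invoke condition (3): in the smooth threefold $X$ the indeterminacy locus of $\alpha$ has codimension at least two, so it is a finite union of irreducible curves (with possibly some points), while condition (3) produces infinitely many iterates $C_m := \varphi^{-m}_*[C]$ that are well-defined curves on $X$. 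Using the eigenvalue identity $\varphi^*\theta_1(\varphi)=\lambda_1\theta_1(\varphi)$ and the pseudo-automorphism property one checks $\theta_1(\varphi)\cdot C_m<0$ for each such $m$. If the orbit $\{C_m\}$ is infinite, only finitely many $C_m$ can lie in the finite union $\Ind(\alpha)$, so eventually $C_m\not\subset\Ind(\alpha)$, reducing to the previous paragraph.

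The hardest step will be the final one, especially the finite-orbit subcase alluded to in the commentary following Condition~\ref{condition}: if the $\varphi$-orbit of $C$ were finite and entirely contained in $\Ind(\alpha)$, one has to rule out the possibility of regularizing $\varphi$ after simultaneously flopping the orbit — precisely the subtlety that motivates the particular form of condition~(3). The key structural input throughout is Truong's uniqueness, which forces $p^*\theta_1(\varphi)$ to be a positive multiple of the nef class $q^*\theta_1(\psi)$ on $Z$ as soon as $\varphi$ is regularizable; condition (2) then yields an immediate contradiction on any model containing a well-defined preimage of $C$ or of one of its iterates.
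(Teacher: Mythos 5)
Your overall strategy matches the paper's: identify $\theta_1(\varphi)$ with a positive multiple of $\alpha^*\theta_1(\psi)$ (the paper's Lemma \ref{lemma_pb_of_theta}), deduce that $C$ and all its well-defined proper transforms under $\varphi^{-m}$ must lie in $\Ind(\alpha)$, and play this off against condition (3). However, there is a genuine gap at exactly the step you flag as ``the hardest'': the case where the backward orbit of $C$ meets only finitely many distinct curves, all contained in $\Ind(\alpha)$. You do not resolve it, and it is the crux of the theorem --- it is precisely the case realized by the flopped example in Section \ref{sect_examples}, so it cannot be waved away. The paper closes it in Lemma \ref{lemma_cond_implies} as follows: since $\theta_1(\varphi)\cdot[C]<0$ and $\lambda_1(\varphi)^2>\lambda_2(\varphi)$, Lemma \ref{lemma_theta-negative_in_ind} forces $C\subset\Ind(\varphi^n)$ for all $n>N$; on the other hand, if infinitely many proper transforms $C_{-m}$ exist and all lie in the finite set of curves of $\Ind(\alpha)$, the pigeonhole principle produces some $m>0$ with $C_{-m}=C$, whence $C\not\subset\Ind(\varphi^{km})$ for all $k>0$ (using that $\varphi$ is a pseudo-automorphism, so the proper image of $C_{-m}$ under $\varphi^{m}$ is again $C$). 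These two conclusions are incompatible. Your write-up never invokes the fact that a $\theta_1(\varphi)$-negative curve must lie in $\Ind(\varphi^n)$ for all large $n$, which is the ingredient needed to kill the periodic case.

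Two further points need repair. First, your claim that $\theta_1(\varphi)\cdot C_m<0$ for every proper transform $C_m$ is not justified: by Lemma \ref{lemma_pf_of_curve} one has $\varphi^{-m}_*[C]=[C_{-m}]+\sum\mu_i[T_i]$ with $T_i$ effective curves in the image of the indeterminacy locus, so $\theta_1(\varphi)\cdot[C_{-m}]=\lambda_1^{-m}\,\theta_1(\varphi)\cdot[C]-\sum\mu_i\,\theta_1(\varphi)\cdot[T_i]$, and the correction terms have no a priori sign. The paper avoids this entirely: it deduces $C_{-m}\subset\Ind(\alpha)$ directly from $C\subset\Ind(\alpha)=\Ind(\psi^{-m}\circ\alpha)=\Ind(\alpha\circ\varphi^{-m})$, with no intersection computation on the transforms. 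Second, on the common resolution $Z$ the lift $F$ is only a birational self-map (not a pseudo-automorphism), and $p^*\theta_1(\varphi)$ is not an exact $F^*$-eigenvector: by Lemma \ref{lemma_composition_of_pb}, $F^*(p^*\theta_1(\varphi))=\lambda_1(\varphi)\,p^*\theta_1(\varphi)-E$ for some correction divisor $E$ supported over $\Ind(\varphi)$. The paper instead runs the eigenvector argument downstairs on $N^1(X)$, where $\varphi^*(\alpha^*\theta_1(\psi))=(\alpha\circ\varphi)^*\theta_1(\psi)=(\psi\circ\alpha)^*\theta_1(\psi)=\lambda_1\,\alpha^*\theta_1(\psi)$ holds cleanly because $\psi$ is regular, and only then pulls back by $\delta_X$ to obtain the effective error term $E$ used in Lemma \ref{lemma_theta-negative_curves}.
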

 The main step of the proof of Theorem \ref{thm_criterion} consists in proving that if we have a regularization $\psi\colon Y\to Y$ of $\varphi$ and if $\alpha\colon X\dashrightarrow Y$ is a 
 birational map such that $\varphi = \alpha^{-1}\circ\psi\circ\alpha$, then we have $\theta_1(\varphi) = \alpha^*\theta_1(\psi)$. This fact relies on our assumption that $\varphi$ 
 is a pseudo-automorphism, but it can fail in general. Property $(2)$ of Condition \ref{condition} implies that all proper images of $C$ are included in $\Ind(\alpha)$. 
 We then obtain a contradiction with property $(3)$. 



Our second result is motivated by searching a criterion of primitivity for birational automorphisms.

\begin{theorem}\label{thm_primitivity}
 Assume that $\varphi\colon X\dashrightarrow X$ is pseudo-automorphism of a smooth projective threefold~$X$ such that $\lambda_1(\varphi)^2> \lambda_2(\varphi)$.
  If $\varphi$ satisfies Condition \textup{\ref{condition}}, then there exists no dominant rational map $\pi\colon X \dashrightarrow S$ 
  to a normal surface $S$ such that $\pi \circ \varphi = f \circ \pi$ for some birational map $f\colon S \dashrightarrow S$.
\end{theorem}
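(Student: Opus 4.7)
I will argue by contradiction: assume there exists a dominant rational map $\pi\colon X\dashrightarrow S$ to a normal surface together with a birational self-map $f\colon S\dashrightarrow S$ satisfying $\pi\circ\varphi = f\circ\pi$. After normalising $S$, resolving singularities, and passing to an algebraically stable model via Diller--Favre, I may assume $S$ is smooth projective and $(f^n)^* = (f^*)^n$ on $N^1(S)$, the equivariance being preserved at each reduction.

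The first substantive step is to compute the dynamical degrees of $f$ via the Dinh--Nguyen product formula for the equivariant fibration $\pi$, whose generic fibre is a curve. Since a birational self-map of a curve has trivial dynamical degrees and $\lambda_2 = 1$ for any birational self-map of a surface, the formula collapses to $\lambda_1(\varphi) = \lambda_1(f)$ and $\lambda_2(\varphi) = \lambda_1(f)$. Combined with Condition \ref{condition}(1), this forces $\lambda_1(f) > 1$, so $f$ has positive entropy and (after the AS reduction) admits a nonzero nef eigenclass $\theta_1(f) \in N^1(S)$, unique up to positive scalar, with $f^*\theta_1(f) = \lambda_1(f)\,\theta_1(f)$.

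Next I transport $\theta_1(f)$ to $X$. Since $\varphi$ is a pseudo-automorphism, its action $\varphi^*$ on $N^1(X)$ is unambiguous and composes correctly with the pullback $\pi^*\colon N^1(S)\to N^1(X)$ defined via a common resolution of indeterminacies. The equivariance $\pi\circ\varphi = f\circ\pi$ then gives
\[
 \varphi^*(\pi^*\theta_1(f)) \;=\; \pi^*(f^*\theta_1(f)) \;=\; \lambda_1(\varphi)\,\pi^*\theta_1(f),
\]
and $\pi^*\theta_1(f)$ is a nonzero, pseudo-effective class (nonzero because $\pi$ is dominant). By Truong's theorem, applied under Condition \ref{condition}(1), the $\lambda_1(\varphi)$-eigenspace of $\varphi^*$ is one-dimensional and spanned by $\theta_1(\varphi)$; hence $\theta_1(\varphi) = c\,\pi^*\theta_1(f)$ for some $c>0$.

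To close the argument, for any irreducible curve $C\subset X$ I fix a smooth common resolution $\sigma\colon W\to X$, $\tpi\colon W\to S$ and denote by $\tC\subset W$ the strict transform of $C$. The projection formula together with $\sigma_*\tC = C$ yields
\[
 \pi^*\theta_1(f)\cdot C \;=\; \tpi^*\theta_1(f)\cdot \tC \;=\; \theta_1(f)\cdot \tpi_*\tC \;\geq\; 0,
\]
since $\tpi_*\tC$ is an effective $1$-cycle (or zero) on $S$ and $\theta_1(f)$ is nef. Therefore $\theta_1(\varphi)\cdot C \geq 0$ for every curve $C\subset X$, contradicting Condition \ref{condition}(2). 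The principal technical obstacle in this plan is the rigorous verification of the commutation $\varphi^*\pi^* = \pi^*f^*$ for the rational map $\pi$; this demands a careful choice of resolutions of indeterminacy and uses essentially that $\varphi$ is a pseudo-automorphism, so no divisorial contraction spoils the pullback.
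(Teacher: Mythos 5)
Your reduction to an algebraically stable model, the eigenvalue identity $\varphi^*(\pi^*\theta_1(f)) = \lambda_1(\varphi)\,\pi^*\theta_1(f)$, and the conclusion $\theta_1(\varphi) = c\,\pi^*\theta_1(f)$ with $c>0$ and $\theta_1(f)$ nef all match the paper's argument (the commutation $\varphi^*\pi^*=\pi^*f^*$ that you defer is handled there by first blowing up the finitely many points of $S$ with two-dimensional $\pi$-fibers and then using that a pseudo-automorphism contracts no divisor). The gap is in your final step. The first equality in
\begin{equation*}
 \pi^*\theta_1(f)\cdot C \;=\; \widetilde{\pi}^*\theta_1(f)\cdot \widetilde{C}
\end{equation*}
is false in general: by definition $\pi^*\theta_1(f)=\sigma_*\bigl(\widetilde{\pi}^*\theta_1(f)\bigr)$, and the projection formula gives
\begin{equation*}
 \sigma_*\bigl(\widetilde{\pi}^*\theta_1(f)\bigr)\cdot C \;=\; \sigma^*\sigma_*\bigl(\widetilde{\pi}^*\theta_1(f)\bigr)\cdot \widetilde{C}
 \;=\;\bigl(\widetilde{\pi}^*\theta_1(f)+E\bigr)\cdot \widetilde{C},
\end{equation*}
where $E$ is an effective $\sigma$-exceptional divisor. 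The correction term $E\cdot\widetilde{C}$ is nonnegative only when $\widetilde{C}\not\subset\Supp(E)$, i.e.\ when $C$ is not contained in $\Ind(\pi)$. So all that follows is the paper's Lemma \ref{lemma_pb_nef_is_almost_nef}: every $\theta_1(\varphi)$-negative curve lies in $\Ind(\pi)$. That is not a contradiction with Condition \ref{condition}(2), and it cannot become one with your argument, since you never use Condition \ref{condition}(3). Indeed, if your claim were correct, the pullback of a nef class under any birational map would intersect every curve nonnegatively; Section \ref{sect_examples} of the paper exhibits a flop for which $\theta_1(\varphi_+)\cdot[C_+]<0$ even though $\theta_1(\varphi_+)$ is the pullback of a nef eigenclass.

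The missing step is dynamical. From the above, $C\subset\Ind(\pi)$, and since $\Ind(f^{-m})$ is a finite set of points, $C\subset\Ind(f^{-m}\circ\pi)=\Ind(\pi\circ\varphi^{-m})$, so whenever $C\not\subset\Ind(\varphi^{-m})$ the proper transform $C_{-m}$ also lies in $\Ind(\pi)$. Condition \ref{condition}(3) supplies infinitely many such $m$, while $\Ind(\pi)$ contains only finitely many curves; by pigeonhole $C_{-m}=C_{-m'}$ for some $m\neq m'$, so $C$ is $\varphi$-periodic and in particular $C\not\subset\Ind(\varphi^{k(m-m')})$ for all $k>0$. This contradicts Lemma \ref{lemma_theta-negative_in_ind}, which places the $\theta_1(\varphi)$-negative curve $C$ inside $\Ind(\varphi^{n})$ for all large $n$ (this is Lemma \ref{lemma_cond_implies} in the paper). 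Without this pigeonhole argument, and without invoking part (3) of Condition \ref{condition}, the proof does not close.
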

Condition \ref{condition} implies only that the pseudo-automorphism $\varphi$ can not  preserve the structure of a fibration over a surface.
The case of a fibration over a curve seems to be much harder.


We apply Theorems \ref{thm_criterion} and \ref{thm_primitivity} to the family of pseudo-automorphisms introduced by J. Blanc in \cite{Blanc_Pseudo} that we now recall.
Let $Q\subset \p^{d}$ be a smooth cubic hypersurface for some $d\geqslant 2$. We associate a birational involution $\sigma_p$ of $\p^d$ to each point $p\in Q$.
For a general line $L$ passing through $p$ and intersecting $Q$ in three distinct points $p, q_1$ and $q_2$ we define $\sigma_p|_L$ as a unique non-trivial involution 
of $\p^1$ fixing both points $q_1$ and $q_2$. This defines a birational transformation of the projective space.

 Now take any general points $p_1,\dots,p_k$ on $Q$ with~\mbox{$k\geqslant 3$}.
 Then~\mbox{$\sigma_{p_1}\circ\dots\circ\sigma_{p_k}$} induces a positive
 entropy birational automorphism of $\p^d$ by \cite[Proposition 2.3]{Blanc_Pseudo}. 
 If $d = 2$, then the composition $\sigma_{p_1}\circ\dots\circ\sigma_{p_k}$ is a regularizable non-primitive 
 automorphism by \cite{Blanc_ex}.
 Here is the main result of this paper.

 \begin{theorem}
\label{theorem_blanc_example}
 Assume that $Q\subset \p^3$ is a very general smooth cubic surface over $\mathbb{C}$ and $p_1,p_2,p_3$ are general points on $Q$.
 Then the composition $\varphi = \sigma_{p_3}\circ\sigma_{p_2}\circ\sigma_{p_1}$
 is a positive entropy birational automorphism of $\p^3$ which is non-regularizable and does not preserve the structure of a fibration over a surface.
\end{theorem}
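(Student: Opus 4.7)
The plan is to apply Theorems \ref{thm_criterion} and \ref{thm_primitivity} to $\varphi$. Both theorems require working on a smooth projective model $X$ birational to $\p^3$ on which the induced birational self-map is a pseudo-automorphism, so the first task is to build such an $X$: following Blanc, the indeterminacy of each involution $\sigma_{p_i}$ is resolved by blowing up $p_i$ together with the curves in its base scheme (all supported on the invariant cubic $Q$), and $X$ is obtained by iterating this procedure enough times that all $\sigma_{p_i}$, and hence their composition, lift to a pseudo-automorphism $\widetilde{\varphi}\colon X \dashrightarrow X$. Since non-regularizability and the absence of a $\varphi$-equivariant fibration are birational invariants, it suffices to verify the three items of Condition \ref{condition} for $\widetilde{\varphi}$.

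To establish Condition \ref{condition}(1), one writes down the matrix of $\sigma_{p}^{*}$ acting on $N^{1}(X)$ in the basis given by the hyperplane class and the exceptional divisors; this matrix is essentially what powers Blanc's lower bound on $\lambda_{1}$. Taking the product over $p=p_{1},p_{2},p_{3}$ yields the matrix of $\widetilde{\varphi}^{*}$, whose spectral radius is $\lambda_{1}(\varphi)$. Doing the analogous calculation on $N^{2}(X)$ (or dually on $N_{1}(X)$) produces $\lambda_{2}(\varphi)$. For parameters on a very general cubic these numbers are explicit algebraic integers, and the inequality $\lambda_{1}(\varphi)^{2} > \lambda_{2}(\varphi)$ becomes a concrete numerical comparison that should be verified directly. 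With (1) in hand, Truong's theorem cited after Condition \ref{condition} delivers the eigenclass $\theta_{1}(\varphi)\in N^{1}(X)$.

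The next step is to exhibit the curve $C$ of Condition \ref{condition}(2). The natural candidates live either on the preserved cubic $Q$ or on the exceptional divisors introduced in the construction of $X$: for instance, the strict transform of a tangent line to $Q$ at one of the $p_{i}$, or a ruling of the exceptional divisor over some $p_{i}$. Using the explicit coordinates of $\theta_{1}(\varphi)$ obtained in the previous step and the known intersection theory on $X$, one computes $\theta_{1}(\varphi)\cdot[C]$ and checks that for at least one such geometrically natural choice this intersection is negative; heuristically this comes from the fact that $\theta_{1}$ lies on the boundary of (or outside) the nef cone and is therefore forced to pair negatively with curves contracted by the sequence of blow-ups used to resolve $\varphi$.

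The main obstacle is property (3), showing that $C\not\subset\Ind(\varphi^{-m})$ for infinitely many $m>0$; this is exactly where the very general hypothesis on $(Q,p_{1},p_{2},p_{3})$ must be used. Blanc's construction is algebraic in these parameters, so for each fixed $m$ the condition ``$C\subset\Ind(\varphi^{-m})$'' cuts out a proper Zariski-closed subset of the parameter space (otherwise $C$ would be forced inside the base locus of $\varphi^{-m}$ for every choice, which fails for a direct test case). Hence if (3) failed, the parameters would have to lie in a countable intersection of such proper closed subsets, and a very general point avoids all of them. Once Condition \ref{condition} is verified, Theorem \ref{thm_criterion} yields non-regularizability of $\varphi$ and Theorem \ref{thm_primitivity} rules out any $\varphi$-equivariant dominant rational map to a normal surface, completing the proof.
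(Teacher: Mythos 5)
Your overall architecture matches the paper's: pass to Blanc's model $X$ where $\varphi$ lifts to a pseudo-automorphism, verify the three items of Condition \ref{condition}, and invoke Theorems \ref{thm_criterion} and \ref{thm_primitivity}. Items (1) and (2) are handled essentially as you describe (for (1) Blanc's computation even gives $\lambda_1(\varphi)=\lambda_2(\varphi)>1$, so the strict inequality is immediate; for (2) the paper's curve is the strict transform $L$ of the line through $p_1$ and a point $q\in\Gamma_1\cap\Gamma_2$, and the negativity $\theta_1(\varphi)\cdot[L]<0$ is a short intersection computation using the explicit form of $\theta_1(\varphi)$ --- your candidates are left vague, but this part is fillable).

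The genuine gap is in your treatment of property (3). Your argument is: for each $m$ the locus of parameters with $C\subset\Ind(\varphi^{-m})$ is Zariski closed, it is proper because it ``fails for a direct test case,'' hence a very general parameter avoids the countable union. The closedness and the very-generality step are fine (and the paper uses exactly this reduction in its Corollary on the universal family), but the existence of the test case is not a throwaway remark --- it is the entire difficulty, and there is no a priori reason it should exist. Indeed, by Lemma \ref{lemma_theta-negative_in_ind} any $\theta_1(\varphi)$-negative curve necessarily satisfies $C\subset\Ind(\varphi^{n})$ for \emph{all} large $n>0$, so the forward orbit is always entirely contained in the indeterminacy loci; one must prove that the backward orbit behaves in the opposite way, and Section \ref{sect_examples} of the paper exhibits a pseudo-automorphism satisfying (1) and (2) for which (3) genuinely fails (the negative curve lies in $\Ind(\varphi^{m})$ for every nonzero $m$). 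So ``a direct test case'' cannot be presumed; producing one uniformly in $m$ is what the paper's Lemmas \ref{lemma_preimage_of_point}--\ref{corollary_line_on_vg_cubic} do, via a delicate induction over $\ZZ[a_I][X,Y]$ tracking divisibility by $2$ and $4$ and degrees in $Y$ of the coordinates of a point of $L$ under iterates of $F_R^{-1}$, verified by computer algebra. Without this step (or some substitute for it), the proof is incomplete at its central point.
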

 Note that none of the previously known criteria from~\cite{Bedford-Kim_Pseudo} and \cite[Proposition 1.1]{Cantat_Deserti_Xie} apply in this situation.

To prove Theorem \ref{theorem_blanc_example} we find a $\theta_1(\varphi)$-negative curve $C$ on a birational model $X$ of $\p^3$ where $\varphi$ induces 
an algebraically stable automorphism.
The most difficult part in the proof of Theorem \ref{theorem_blanc_example} is to show that~$C$
satisfies property $(3)$ of Condition \ref{condition}. 
We do this by computing the orbit of a well-chosen point in $C$.
Namely, we take the cubic surface $Q$ such that its coefficients when we write it in some coordinates of $\p^3$ are algebraically independent.
We fix points $p_1,p_2$ and $p_3$, then we can write formulas defining $\sigma_i$ for $i = 1,2,3$.
Then we chose some concrete point in $\p^3$ and consider its images under the action of $\sigma_i$ as a set of four polynomials of coefficients of $Q$.
We show that after several iterations of involutions these polynomials has some form which remains the same after applying new involutions.
Thus, we manage to show that its orbit never falls into the indeterminacy locus of $\varphi$.

Dealing with three involutions makes our computation already quite tricky. We believe that our theorem is valid for any composition 
of at least three involutions associated to general points on $Q$.

The paper is organized in the following way. In Section \ref{sect_preliminaries} we recall properties of birational maps.
In Section~\ref{sect_proofs} we prove Theorems \ref{thm_criterion} and \ref{thm_primitivity}.
In Section~\ref{sect_Blanc} we recall the construction of the positive entropy automorphism of $\p^3$ 
introduced in \cite{Blanc_Pseudo}, show that Condition \ref{condition} is satisfied for it  
and prove Theorem \ref{thm_blanc}. In Section \ref{sect_examples} we give an example of a 
regularizable pseudo-automorphism which satisfies all but third properties of Condition \ref{condition}.

\medskip

{\bf Acknowledgements.} I am very grateful to my advisor, Charles Favre, 
for his interest in this work and many useful suggestions.

\setcounter{tocdepth}{1}
\tableofcontents

\section{Preliminaries}
\label{sect_preliminaries}
\subsection{Birational maps acting on the divisor class group} Throughout this paper we consider smooth algebraic varieties over an algebraically closed field of characteristic 0. 
If~\mbox{$\alpha\colon X\dashrightarrow Y$} is a rational map between two varieties 
we denote by $\Ind(\alpha)$ the complement to the greatest open subset of $X$ on which $\alpha$ is regular, we call it the \emph{ locus of indeterminacy of~$\alpha$}.
By~$\Exc(\alpha)$ we denote the union of divisors in $X$ which are contracted under the action of $\alpha$ and we call it the \emph{exceptional locus of the map $\alpha$.}

Assume that $\alpha\colon X\dashrightarrow Y$ is a rational map between smooth varieties $X$ and $Y$. Consider a smooth variety~$V$ and two regular 
morphisms $\delta_X$ and~$\delta_Y$ to $X$ and $Y$ respectively such that the diagram
commutes and $\delta_X$ is birational:
  \begin{equation}\label{eq_birational_graph}
  \xymatrix{
  &V  \ar[ld]_{\delta_X} \ar[rd]^{\delta_Y} & \\
  X \ar@{-->}[rr]_{\alpha} && Y
  }
 \end{equation}
Note that such a diagram always exists (take, e.g., the resolution of indeterminacy of the graph of $\alpha$ in $X\times Y$).
Moreover, here we can assume that $\Ind(\alpha)$ coinsides with $\Ind(\delta_X^{-1})$.

By the \emph{total image} of a subset $W$ in $X$ we denote the subset $\alpha(W) = \delta_Y(\delta_X^{-1}(W))$ in $Y$. 
By the \emph{proper transform} of a subvariety $W$ which does not lie in $\Ind(\alpha)$ we denote the subvariety $\check{\alpha}(W) = \overline{\alpha(W\setminus \Ind(\alpha))}$ 
of~$Y$. Note that the proper transform of an irreducible subvariety is always irreducible.
There are many choices for the smooth variety $V$ as in the diagram \eqref{eq_birational_graph}. However, 
constructions of the proper transform and total image do not depend on this choice.
 
 If~$X$ is a smooth variety  we can define the $\RR$-vector 
space $N^1(X)$ of classes of Cartier divisors modulo numerical equivalence.  
For any element in the divisor class group $D\in N^1(Y)$ we can define its \emph{inverse image}:
\begin{align*}
 \alpha^*D = {\delta_X}_*(\delta_Y^*(D))\in N^1(X).
\end{align*}
This operation also does not depend on the choice of resolution of $\alpha$.
The divisor class group $N^1(X)$
of a smooth variety $X$ is a finite-dimensional vector space.
If $S$ is a proper irreducible reduced hypersurface in $X$, then we denote by $[S]$ the class of  $S$ in $N^1(X)$.

On a smooth variety $X$ we consider the group of numerical classes of curves which is the $\RR$-vector space $N_1(X)$ 
generated by classes of irreducible reduced curves modulo numerical equivalence.
If $C$ is an irreducible reduced curve in $X$, then we denote by $[C]$ its class in $N_1(X)$. 
On a smooth variety $X$ there is a natural perfect pairing between $N^1(X)$ and~$N_1(X)$.

We say that the class $D\in N^1(X)$ 
is \emph{nef} if $D\cdot [C]\geqslant 0$ for any effective curve $C$ on $X$.
The inverse image of a nef class under a regular map is nef.
In case of rational maps it is not true, but we have the following generalization of this fact.
\begin{lemma}\label{lemma_pb_nef_is_almost_nef}
 Consider a birational map $\alpha\colon X\dashrightarrow Y$ between smooth varieties $X$ and $Y$. If $D$ is a class of a nef divisor on $Y$ 
 and $C$ is a curve on $X$ such that $\alpha^*D\cdot [C] <0$, then $C$ lies in $\Ind(\alpha)$. In particular, in case~\mbox{$\dim(X) = 3$} the set 
 of $\alpha^*D$-negative curves is finite for any nef class of divisor $D$.
\end{lemma}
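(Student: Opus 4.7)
The plan is to work on the resolution diagram \eqref{eq_birational_graph}, with $V$ smooth and $\delta_X,\delta_Y$ regular morphisms chosen so that $\Ind(\alpha)=\Ind(\delta_X^{-1})$, and to compare $\alpha^*D$ to $\delta_Y^*D$ on $V$. First I would note that $\delta_Y^*D$ is nef on $V$: any irreducible curve on $V$ either is contracted by $\delta_Y$, in which case it meets $\delta_Y^*D$ in degree zero, or is mapped dominantly onto a curve in $Y$, in which case its intersection with $\delta_Y^*D$ is a positive multiple of $D\cdot\delta_Y(\widetilde\gamma)\geqslant 0$ by the nefness of $D$.

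The core of the argument, and the step I expect to be the main obstacle, is to establish that the class
\begin{equation*}
E \;:=\; \delta_X^*\alpha^*D \;-\; \delta_Y^*D
\end{equation*}
is effective and $\delta_X$-exceptional. That $E$ is $\delta_X$-exceptional is immediate from $(\delta_X)_*E=\alpha^*D-\alpha^*D=0$, using $(\delta_X)_*\delta_Y^*D=\alpha^*D$ and $(\delta_X)_*\delta_X^*\alpha^*D=\alpha^*D$. For the effectivity I would invoke the negativity lemma for the birational morphism $\delta_X$: globally nef implies $\delta_X$-nef, so for any $\delta_X$-contracted curve $F$ one has $-E\cdot F=\delta_Y^*D\cdot F\geqslant 0$, and because $(\delta_X)_*E=0$ is trivially effective, negativity yields $E\geqslant 0$. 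A more hands-on route, if one prefers to avoid the lemma, is to factor $\delta_X$ as a sequence of blow-ups of smooth centers and verify $E\geqslant 0$ inductively at each step by intersecting with a fiber. Once $E$ is known to be effective, assume for contradiction that $C\not\subset \Ind(\alpha)$ and let $\widetilde C\subset V$ be the proper transform of $C$. By the assumption on $\Ind(\delta_X^{-1})$, the curve $\widetilde C$ is not contained in $\Exc(\delta_X)$, hence not in $\Supp(E)$, so $E\cdot[\widetilde C]\geqslant 0$. Since $\delta_X\colon\widetilde C\to C$ is birational of degree one, the projection formula gives
\begin{equation*}
 \alpha^*D\cdot [C] \;=\; \delta_X^*\alpha^*D\cdot[\widetilde C] \;=\; \delta_Y^*D\cdot [\widetilde C] \;+\; E\cdot [\widetilde C] \;\geqslant\; 0,
\end{equation*}
contradicting $\alpha^*D\cdot[C]<0$.

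For the final assertion, note that since $X$ is smooth and $\alpha$ is birational, $\Ind(\alpha)$ has codimension at least two in $X$; when $\dim X=3$ this forces $\dim\Ind(\alpha)\leqslant 1$, so $\Ind(\alpha)$ contains only finitely many irreducible curves (its one-dimensional components), and by the first part these are the only candidates for $\alpha^*D$-negative curves.
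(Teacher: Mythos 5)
Your argument is correct and follows essentially the same route as the paper: resolve $\alpha$, write $\delta_X^*\alpha^*D=\delta_Y^*D+E$ with $E$ a $\delta_X$-exceptional effective class, and conclude via the projection formula applied to the proper transform $\widetilde C$. The only cosmetic difference is that you invoke the negativity lemma for $\delta_X$ directly, while the paper delegates the effectivity of $E$ to its Lemma~\ref{lemma_composition_of_pb}, which rests on the same negativity lemma.
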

\begin{proof}
 Consider a diagram as in \eqref{eq_birational_graph}.
 If $D$ is a nef divisor in~$Y$, then its pullback $\widetilde{D} = \delta_Y^*D$ is also nef.
 
 Consider the class $\delta_X^*(\delta_{X*}(\widetilde{D}))$; the difference $E$ between this class and $\widetilde{D}$ is supported
 on the exceptional locus of $\delta_X$:
 \begin{equation*}
  \widetilde{D} = \delta_X^*(\delta_{X*}(\widetilde{D})) - E.
 \end{equation*}
 By Lemma \ref{lemma_composition_of_pb} we get that $E$ is the class of an effective divisor. 
 
 Take any irreducible curve $C$ on $X$ outside $\delta_X(\Exc(\delta_X))$. Denote by $\tC$ the proper transform of $C$ in~$V$ so 
 that $\delta_{X*}[\tC] = [C]$. Then we have
 \begin{equation*}
  \delta_{X*}(\widetilde{D})\cdot [C] = \delta_{X*}(\widetilde{D})\cdot \delta_{X*}[\tC] = \delta_X^*(\delta_{X*}(\widetilde{D}))\cdot [\tC] = 
  (\widetilde{D} + E)\cdot [\tC]\geqslant 0.
 \end{equation*}
 The last inequality is true since $\widetilde{D}$ is nef, $E$ is effective and $\tC$ lies outside the support of $E$. Thus, we obtain 
 that the product of $\delta_{X*}(\widetilde{D})$  and any curve outside $\delta_X(\Exc(\delta_X))$ is not negative and this implies the result. 
\end{proof}

%
%

By the above definition a rational map $\alpha\colon Y\dashrightarrow X$  between smooth varieties $X$ and $Y$ defines 
a map between their groups of numerical classes of divisors $ \alpha^*\colon N^1(X)\to N^1(Y)$. If $\alpha$ is regular, then $\alpha^*$ is 
the standard functor of the inverse image; in particular, the composition of the inverse images of two 
regular maps is equal to the inverse image of the composition of maps. In case of rational maps the situation is more complicated.
\begin{lemma}\label{lemma_composition_of_pb}
 Let $X, Y$ and $Z$ be smooth varieties and $\alpha\colon Y\dashrightarrow X$ and $\beta\colon Z\dashrightarrow Y$ be rational maps such that $\beta(Z)$ does not lie in $\Ind(\alpha)$. 
 Then the composition $\alpha\circ\beta\colon Z\dashrightarrow X$ is well-defined and for each class~\mbox{$D\in N^1(X)$} we have
 the following equality:
 \begin{equation}\label{eq_difference_of_pb}
  \beta^*(\alpha^*(D)) - (\alpha\circ\beta)^*(D) = E,
 \end{equation}
  where $E$ is a divisor on $Z$ supported in $\beta^{-1}(\Ind(\alpha))$. If $D$ is nef, then $E$ is effective.
\end{lemma}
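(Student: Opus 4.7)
The plan is to build a common smooth resolution of the three rational maps $\alpha$, $\beta$, and $\alpha\circ\beta$, and then reduce the identity \eqref{eq_difference_of_pb} to the comparison between $\nu^*D$ and its pushforward-pullback on a resolution of $\alpha$, which is controlled by $\mu$-exceptional divisors.

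First I would pick a smooth variety $V$ with a birational morphism $\mu\colon V\to Y$ and a regular morphism $\nu\colon V\to X$ resolving $\alpha$, so that $\alpha^*D=\mu_*\nu^*D$ for every $D\in N^1(X)$ and $\Exc(\mu)=\mu^{-1}(\Ind(\alpha))$. Because $\beta(Z)\not\subset\Ind(\alpha)=\Ind(\mu^{-1})$, the rational map $\mu^{-1}\circ\beta\colon Z\dashrightarrow V$ is defined on a dense open set; resolving its indeterminacy I obtain a smooth $W$ with a birational morphism $p\colon W\to Z$ and regular maps $t\colon W\to V$, $s:=\mu\circ t\colon W\to Y$ such that $\beta\circ p=s$. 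Setting $r:=\nu\circ t\colon W\to X$, one has $r=\nu\circ t=\alpha\circ\mu\circ t=\alpha\circ s=\alpha\circ\beta\circ p$, so $(W,p,r)$ resolves $\alpha\circ\beta$. In particular this already shows the composition is well-defined.

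Using that $\alpha^*$, $\beta^*$, and $(\alpha\circ\beta)^*$ may be computed with respect to any resolution, I compute
\begin{equation*}
\beta^*(\alpha^*D)=p_*s^*(\mu_*\nu^*D)=p_*t^*\mu^*\mu_*\nu^*D,\qquad (\alpha\circ\beta)^*D=p_*r^*D=p_*t^*\nu^*D.
\end{equation*}
Their difference is $E=p_*t^*F$ with $F:=\mu^*\mu_*\nu^*D-\nu^*D$. The projection formula gives $\mu_*F=0$, hence $F$ is supported on $\Exc(\mu)=\mu^{-1}(\Ind(\alpha))$; consequently $\Supp(E)\subset p(s^{-1}(\Ind(\alpha)))=\beta^{-1}(\Ind(\alpha))$, which proves the first part of the lemma.

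The main technical point is the effectivity statement, which I would derive from the negativity lemma. Assume $D$ is nef on $X$; then $\nu^*D$ is nef on $V$ since $\nu$ is regular. For any $\mu$-contracted curve $C\subset V$ the projection formula yields $-F\cdot C=\nu^*D\cdot C\geqslant 0$, so $-F$ is $\mu$-nef. Combined with $\mu_*F=0$, the negativity lemma forces $F$ to be effective on $V$; then $t^*F$ is effective on $W$ and $E=p_*t^*F$ is effective on $Z$. The construction of $W$ and the algebraic manipulations are routine, so the real obstacle is the invocation of the negativity lemma in the last step, which is what converts the nefness of $D$ into effectivity of $E$.
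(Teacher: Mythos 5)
Your proof is correct and follows essentially the same route as the paper's: both compute the two pullbacks on a common smooth resolution dominating a resolution of $\alpha$, identify the difference as the pushforward of the $\mu$-exceptional class $\mu^*\mu_*\nu^*D-\nu^*D$, and deduce effectivity in the nef case from the negativity lemma \cite[Lemma 3.39]{Kollar_Mori}. The only (cosmetic) difference is that you resolve $\mu^{-1}\circ\beta$ directly over $Z$ instead of first resolving the graph of $\beta$ and then the induced map between the two graphs, which slightly shortens the tower of resolutions.
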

\begin{proof}
 Denote by $\Gamma_{\alpha}$ and $\Gamma_{\beta}$ resolutions of singularities of graphs of maps $\alpha$ and $\beta$ respectively.
 Denote by~$p_{\alpha}$, $q_{\alpha}$ projections to $X$ and $Y$ from $\Gamma_{\alpha}$ and by $p_{\beta}$ and $q_{\beta}$
 projection from $\Gamma_{\beta}$ to $Y$ and $Z$. Both maps $q_{\alpha}$ and $q_{\beta}$ are birational and $q_{\alpha}(\Exc(q_{\alpha}^{-1})) = \Ind(\alpha)$. 
 The composition $q_{\alpha}^{-1}\circ p_{\beta}$ induces the rational map  $\gamma \colon \Gamma_{\beta} \dashrightarrow \Gamma_{\alpha}$.
 Denote by $\Gamma$ the resolution of the graph of $\gamma$
 and by $p$ and $q$ projections to $\Gamma_{\alpha}$ and $\Gamma_{\beta}$. The morphism $q$ is birational and the following diagram commutes:
 \begin{equation*}
  \xymatrix{
  &&\Gamma \ar[ld]_{q} \ar[rd]^p &&\\
  & \Gamma_{\beta} \ar[ld]_{q_{\beta}} \ar[rd]^{p_{\beta}} \ar@{-->}[rr]^{\gamma} && \Gamma_{\alpha} \ar[ld]_{q_{\alpha}} \ar[rd]^{p_{\alpha}} &\\
  Z \ar@{-->}[rr]^{\beta} && Y \ar@{-->}[rr]^{\alpha} && X
  }
 \end{equation*}
 By definition $(\alpha \circ\beta)^*D = q_{\beta *}(q_*(p^*(p_{\alpha}^*D)))$ and $\beta^*(\alpha^*D) = q_{\beta *}(p_{\beta}^*(q_{\alpha *}(p_{\alpha}^*D)))$.
 Denote by $\widetilde{D}$ the class $p_{\alpha}^*D$ in $N^1(\Gamma_{\alpha})$ and denote by $\widetilde{E}$ the following difference:
 \begin{equation*}
  \widetilde{E} = p_{\beta}^*(q_{\alpha *}\widetilde{D}) - q_*(p^*\widetilde{D}).
 \end{equation*}
 By definition we see that $\beta^*(\alpha^*D) - (\alpha\circ\beta)^*D= q_{\beta *}\widetilde{E}$. 
 Consider the following equality:
 \begin{multline*}
  \widetilde{E} = q_*(q^*\widetilde{E}) = q_*(q^*(p_{\beta}^*(q_{\alpha *}\widetilde{D}))) - q_*(p^*\widetilde{D}) =\\
  = q_*(p^*(q_{\alpha}^* (q_{\alpha *}\widetilde{D}))) - q_*(p^*\widetilde{D}) = q_*(p^*(\widetilde{D} + E')) - q_*(p^*\widetilde{D}) =  q_*(p^* E'),
 \end{multline*}
 where $E' = q_{\alpha}^* (q_{\alpha *}\widetilde{D}) - \widetilde{D}$ is a  class of divisor with the support in $\Exc(q_{\alpha})$. 
 Thus, the image of the support of $\widetilde{E} = q_*(p^* E')$ under $p_{\beta}$ lies in $\Ind(\alpha)$.
 This implies that $E$ is a divisor on $Z$ supported in $\beta^{-1}(\Ind(\alpha))$.
 
 Now assume that $D$ is a nef divisor. Then so is $\widetilde{D}$; thus, the class $E'$ is $q_{\alpha}$-antinef 
 by construction and~$q_{\alpha *}E' = 0$ is an effective divisor.
 By \cite[Lemma 3.39]{Kollar_Mori} we get that $E'$ is effective. 
 Consider the divisor class $q^*E'$; it is $q$-nef and $E' = q_*(q^*E')$ is effective. Thus, by \cite[Lemma 3.39]{Kollar_Mori} we get that $q^*E'$ is effective.
 Then so is $E$ and this finishes the proof.
\end{proof}

Let $\alpha\colon X\dashrightarrow Y$ be a rational map between smooth varieties $X$ and $Y$. 
Then using the natural perfect pairing between $N^1(X)$ and $N_1(X)$ we can define the direct 
image $\alpha_*\colon N_1(X)\to N_1(Y)$ as the dual map to $\alpha^*\colon N^1(Y)\to N^1(X)$.
If the curve $C$ does not lie in the indeterminacy locus of $\alpha$, then we have the following interpretation of the direct image of a curve:
\begin{lemma}\label{lemma_pb_of_C_under_bu}
 If $\delta\colon Y\to X$ is a blow-up of a smooth variety $X$ in smooth center $Z$ and $C \not\subset Z$ is an irreducible curve in $X$, 
 then for any irreducible curve $T$ such that $\delta(T)$ is a point we have 
 \begin{equation*}
  \delta^*[C] = [\widetilde{C}] + \mu[T]\in N_1(X),
 \end{equation*}
 where $\widetilde{C}$ is the proper transform of $C$ and $\mu$ is a non-negative number.
\end{lemma}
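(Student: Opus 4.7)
The plan is to identify $\delta^*[C]$ by testing it against a spanning set of $N^1(Y)$. Recall from the definition introduced just before Lemma~\ref{lemma_pb_of_C_under_bu} that $\delta^*\colon N_1(X)\to N_1(Y)$ is the transpose of $\delta_*\colon N^1(Y)\to N^1(X)$ with respect to the intersection pairings. Since $\delta$ is the blow-up of a smooth variety along a smooth (connected) centre, one has the standard decomposition
\begin{equation*}
 N^1(Y) = \delta^*N^1(X)\oplus \RR\cdot[E],
\end{equation*}
where $E$ denotes the exceptional divisor. Hence $\delta^*[C]\in N_1(Y)$ is uniquely determined by its intersection numbers with $\delta^* D'$ for every $D'\in N^1(X)$ and with $[E]$.

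These intersections are read off directly from the duality together with the projection formula:
\begin{equation*}
 \delta^* D'\cdot \delta^*[C] = \delta_*\delta^* D'\cdot [C] = D'\cdot [C] \quad\text{and}\quad [E]\cdot \delta^*[C] = \delta_*[E]\cdot [C] = 0,
\end{equation*}
using $\delta_*\delta^* = \id$ on $N^1(X)$ and the fact that $E$ is contracted to a subvariety of codimension at least two, whence $\delta_*[E] = 0$.

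The remaining step is to check that $[\widetilde{C}] + \mu[T]$ satisfies the same two constraints for a suitable $\mu\geqslant 0$. The first constraint is immediate from the projection formula: one has $\delta^*D'\cdot[\widetilde{C}] = D'\cdot\delta_*[\widetilde{C}] = D'\cdot[C]$ because $\delta$ restricts to a birational map from $\widetilde{C}$ onto $C$ (this uses $C\not\subset Z$), while $\delta^*D'\cdot [T] = D'\cdot\delta_*[T] = 0$ since $T$ is contracted. The second constraint forces
\begin{equation*}
 \mu = -\frac{[E]\cdot[\widetilde{C}]}{[E]\cdot[T]}.
\end{equation*}
The numerator is non-negative because $E$ is effective and $\widetilde{C}\not\subset E$ (again since $C\not\subset Z$). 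The denominator is strictly negative: $T$ lies in a fibre $\mathbb{P}^{r}$ of the projective bundle $E\to Z$, and $\mathcal{O}_E(E)$ restricts to $\mathcal{O}(-1)$ on each fibre, so $[E]\cdot[T] = -\deg(T)<0$. Hence $\mu\geqslant 0$, as required.

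The only conceptual point is to couple the definition of $\delta^*$ on curves via duality with the standard decomposition of $N^1$ of a smooth blow-up; once these are in place the argument reduces to the projection formula and a positivity check on the exceptional divisor.
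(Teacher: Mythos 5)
Your proof is correct and follows essentially the same route as the paper's: both identify $\delta^*[C]-[\widetilde{C}]$ as a multiple of $[T]$ by pairing against pulled-back divisors (via the projection formula), and both pin down the sign of the coefficient from $[E]\cdot[\widetilde{C}]\geqslant 0$ together with $[E]\cdot[T]<0$. The only cosmetic difference is that you phrase the uniqueness step through the decomposition $N^1(Y)=\delta^*N^1(X)\oplus\RR[E]$, whereas the paper works with the dual statement that the kernel of $\delta_*$ on $N_1$ is spanned by $[T]$.
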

\begin{proof}
  Take some class of divisor $D$ in $X$. Since 
 \begin{equation*}
   \delta^*D\cdot ([\widetilde{C}] - \delta^*[C])  = D\cdot \delta_{*}[\widetilde{C}] - \delta^*(D\cdot[C])  = 0,
 \end{equation*}
 we get that $\delta^*[C] = [\widetilde{C}] + \xi$, where $\xi\in N_1(X)$ is a class of curve such that $\delta_{1*}(\xi) = 0$. 
 Denote by $E$ the exceptional divisor of $\delta$.
 Since $\delta$ is a blow-up along a smooth center, the class $\xi$ is proportional to the class of the curve $T$ in $E_1$ such that $\delta(T)$ is a point.
 By the projection formula we have
 \begin{equation*}
   0 = E\cdot \delta^*[C] = E \cdot ([\widetilde{C}] + \xi) = E\cdot [\widetilde{C}] + E \cdot \xi
 \end{equation*}
 Since $\widetilde{C}$ does not lie inside $E$ the product $E\cdot [\widetilde{C}]$ is non-negative. Thus, $E\cdot \xi \leqslant 0$
 then we get that~\mbox{$\xi = \mu [T]$} is effective and  $\mu\geqslant 0$.
\end{proof}

\begin{lemma}\label{lemma_pf_of_curve}
 Assume that $\alpha\colon X\dashrightarrow Y$ is a birational map between smooth varieties $X$ and $Y$ and $C$ is an irreducible curve on $X$ such that $C\not\subset\Ind(\alpha)$.
 If $\widetilde{C}$ is the proper image of $C$ under $\alpha$, then there exist effective curves  $T_1,\dots,T_N$ in $\alpha(\Ind(\alpha))$ and non-negative numbers 
 $\mu_1,\dots \mu_N \geqslant 0$ such that
 \begin{equation*}
  \alpha_*[C] = [\widetilde{C}] + \sum_{i=1}^M \mu_i [T_i] \in N_1(X).
 \end{equation*}
  In particular, if $C$ does not lie in $\Ind(\alpha)$, then the class $\alpha_*[C]$ is effective.
\end{lemma}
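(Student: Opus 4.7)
The plan is to reduce to the blow-up case handled by Lemma \ref{lemma_pb_of_C_under_bu} and then push forward. Choose a smooth variety $V$ and birational morphisms $\delta_X\colon V \to X$, $\delta_Y\colon V \to Y$ as in diagram \eqref{eq_birational_graph}, with $\Ind(\alpha) = \Ind(\delta_X^{-1})$. Passing to a further blow-up of $V$ if necessary, we may assume by Hironaka that $\delta_X$ factors as a tower of blow-ups $V = V_n \to V_{n-1}\to \cdots \to V_0 = X$ along smooth centers $Z_{i-1}\subset V_{i-1}$, chosen so that at every stage the strict transform $\widetilde{C}_{i-1}$ of $C$ is not contained in $Z_{i-1}$. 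If some $\widetilde{C}_{i-1}$ were contained in $Z_{i-1}$, all subsequent strict transforms would lie in the exceptional divisor, forcing $C \subset \delta_X(\Exc(\delta_X)) = \Ind(\alpha)$, contrary to hypothesis. Iterating Lemma \ref{lemma_pb_of_C_under_bu} one blow-up at a time produces
\begin{equation*}
 \delta_X^*[C] \;=\; [\widetilde{C}_V] + \sum_{k} \nu_k [F_k],
\end{equation*}
where $\widetilde{C}_V$ is the strict transform of $C$ in $V$, each $F_k$ is a curve contracted by $\delta_X$, and $\nu_k \geqslant 0$.

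Next we push this identity forward to $Y$ via $\delta_Y$. The required identity $\alpha_*[C] = \delta_{Y*}(\delta_X^*[C])$ follows from the definition of $\alpha_*$ as the dual of $\alpha^* = \delta_{X*}\delta_Y^*$ together with the projection formula: for every $D \in N^1(Y)$,
\begin{equation*}
 \alpha_*[C]\cdot D \;=\; [C]\cdot \delta_{X*}\delta_Y^*D \;=\; \delta_X^*[C]\cdot \delta_Y^*D \;=\; \delta_{Y*}(\delta_X^*[C])\cdot D.
\end{equation*}
The morphism $\delta_X$ restricts to a birational map $\widetilde{C}_V \to C$, while $\delta_Y$ restricts to a morphism $\widetilde{C}_V \to \widetilde{C}$ which is either birational (if $\widetilde{C}$ is a curve) or constant (if $\alpha$ contracts $C$ to a point). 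In both cases $\delta_{Y*}[\widetilde{C}_V] = [\widetilde{C}]$ (with the convention $[\widetilde{C}]=0$ in the second case). Each $F_k$ lies in $\Exc(\delta_X)$, so $\delta_Y(F_k) \subset \delta_Y(\Exc(\delta_X)) \subset \alpha(\Ind(\alpha))$: if $\delta_Y$ contracts $F_k$ then $\delta_{Y*}[F_k] = 0$; otherwise $\delta_{Y*}[F_k] = d_k [T_{j(k)}]$ with $d_k > 0$ for an irreducible image curve $T_{j(k)}$. Collecting contributions and re-indexing yields the claimed formula with non-negative coefficients, and the final sentence of the lemma is then immediate.

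The main obstacle is the bookkeeping argument ensuring that the tower of blow-ups can be arranged so that no intermediate strict transform of $C$ is swallowed by the next blown-up center; I expect this to require a small but careful extraction from Hironaka, possibly by first blowing up $V$ further to separate $\widetilde{C}_V$ from $\Exc(\delta_X)$. A secondary technical point is to justify that the pullback $\delta_X^*$ on $N_1$, already used in Lemma \ref{lemma_pb_of_C_under_bu}, satisfies the projection formula in the form invoked above; this is standard under the smoothness hypotheses in force.
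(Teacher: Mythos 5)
Your argument is correct and follows essentially the same route as the paper's proof: factor $\delta_X$ into a tower of blow-ups along smooth centers, iterate Lemma \ref{lemma_pb_of_C_under_bu} to obtain $\delta_X^*[C] = [\widetilde{C}_V] + \sum_k \nu_k [F_k]$ with $\nu_k \geqslant 0$ and each $F_k$ contracted by $\delta_X$, and then push forward by $\delta_{Y*}$ using $\alpha_* = \delta_{Y*}\circ\delta_X^*$. The only substantive addition is your explicit check that no intermediate strict transform of $C$ is swallowed by a blow-up center (via $C\not\subset\Ind(\alpha)=\delta_X(\Exc(\delta_X))$), a point the paper's induction leaves implicit.
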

\begin{proof}
 Consider the diagram as in \eqref{eq_birational_graph}. We can assume that $\delta_X = \delta_N\circ\dots\circ \delta_1$ 
 is a composition of  several blow-ups~\mbox{$\delta_i\colon X_i\to X_{i-1}$} along smooth centers with $X = X_0$ and $V = X_N$.
 By definition we have~\mbox{$\alpha_*[C] = \delta_{Y*}(\delta_X^*[C])$.}
 
 Consider the blow-up $\delta_i\colon  X_i\to X_{i-1}$ along a smooth center $Z$ and a curve $T$ which does not lie in $Z$.
 Denote by $\widetilde{T}$ the proper preimage of $T$ in $X_i$. Denote by $E$ the exceptional divisor of $\delta_i$ and 
 consider the intersection number $n =E\cdot[\widetilde{T}]$. It is positive since $\widetilde{T}$ does not lie in $E$.
 Then by projection formula we get $\delta_i^*[T] = [\widetilde{T}] + n\cdot F$, where $F$ is the class of the curve on the extremal ray of the contraction $\delta$.
 The class $F$ is effective, its representative lies in $E$ and can be choosen such that it does not lie in a closed set of codimension~$2$.
 
 By induction we prove that  $\delta_X^*[C] = \check{\delta}_X(C) + \sum\mu'_i [T'_i]$, where $\check{\delta}_X(C)$ is the proper
 transform of $C$ under~$\delta_X$, curves $T'_i$ lie in $\Exc(\delta_X)$ and numbers $\mu'_i$ are non-negative. 
 
 Now by \cite{Fulton_IT} the direct image under a regular map $\delta_{Y*}$ of the linear combination of classes of 
 effective curves $\delta_X^*[C] = \check{\delta}_X(C) + \sum\mu'_i [T'_i]$ is a linear combination with the same coefficients of 
 images of these curves under $\delta_Y$. This finishes the proof.
\end{proof}

 Recall also
that $\varphi$ is \emph{regularizable},
if there exist a smooth variety $Y$, a birational map $\alpha$ and a regular automorphism $\psi\colon Y\to Y$ such that the following diagram commutes: 
\begin{equation*}\label{eq_regularization}
 \xymatrix{
  X\ar@{-->}[d]_{\alpha}\ar@{-->}[r]^{\varphi}&X\ar@{-->}[d]_{\alpha}\\Y \ar[r]^{\psi}  & Y 
 }
\end{equation*}
In this situation we call the triple $(Y,\psi, \alpha)$ is a smooth regularization of $\varphi$. 
By functorial desingularization (see, for instance, \cite[Theorem 3.26]{Kollar_resolution}) any
birational map which admits a regularization also admits
a smooth regularization.

 Recall that a birational map $\varphi\colon X\dashrightarrow Y$ is called a \emph{pseudo-isomorphism}, if sets $\Exc(\varphi)$ and $\Exc(\varphi^{-1})$ 
 are empty. If $Y = X$, then we call such map a \emph{pseudo-automorphism}. 
 On surfaces the notion of pseudo-automorphisms coincides with the one of regular automorphisms, 
 while in higher dimensions the notions differ. Nevertheless, in higher dimension there are restrictions on the indeterminacy locus of
 a pseudo-automorphism.
 \begin{lemma}[{\cite{Bedford-Kim_Pseudo}}] \label{lemma_pseudo-aut_ind_consists_of_curves}
  If $\varphi\colon X\dashrightarrow X$ is a pseudo-automorphism of a smooth variety $X$ of dimension $3$ or greater,
  then $\Ind(\varphi)$ has no isolated points.
 \end{lemma}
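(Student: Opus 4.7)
The plan is to argue by contradiction. Assume $p\in\Ind(\varphi)$ is isolated, and set $n=\dim X\geq 3$. I would blow up the point $p$ and study the induced rational map on the exceptional divisor, using the pseudo-automorphism property applied to both $\varphi$ and $\varphi^{-1}$.

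Let $\pi\colon Y:=\Bl_pX\to X$ be the blow-up at $p$, with exceptional divisor $E\cong\p^{n-1}$, and form $\tilde\varphi:=\varphi\circ\pi\colon Y\dashrightarrow X$. Since $p$ is isolated in $\Ind(\varphi)$, the indeterminacy of $\tilde\varphi$ near $E$ is contained in $\pi^{-1}(p)=E$; and since indeterminacy of any rational map between smooth varieties has codimension at least two in the source, $\Ind(\tilde\varphi)\cap E$ has codimension at least one in $E$. Hence $\tilde\varphi$ is regular on a dense open subset $U\subset E$. I write $D:=\overline{\tilde\varphi(U)}=\varphi(p)\subset X$ for the total image of $p$ and $d:=\dim D$.

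The two extreme values of $d$ already yield a contradiction. If $d=n-1$, then $D$ is a divisor; since $\Ind(\varphi^{-1})$ has codimension at least two in $X$, a general $q\in D$ lies in the regular locus of $\varphi^{-1}$, and continuity forces $\varphi^{-1}(q)=p$. Thus $\varphi^{-1}$ contracts the divisor $D$ to the point $p$, contradicting the pseudo-automorphism property of $\varphi^{-1}$. If $d=0$, then $\tilde\varphi$ is constant on $U$ and extends continuously to all of $E$ with a single value $q$, so $\varphi$ extends regularly through $p$ with value $q$, contradicting $p\in\Ind(\varphi)$.

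The hard step is to rule out the intermediate range $1\le d\le n-2$, which is where the hypothesis $n\geq 3$ is essential (no such range exists in dimension two, consistent with the fact that surface pseudo-automorphisms are already regular). The strategy I would pursue is to resolve $\tilde\varphi|_E$ to a morphism $f\colon\hat E\to X$ with $\mu\colon\hat E\to\p^{n-1}$ birational, and exploit the Picard rank one of $\p^{n-1}$: for a very ample divisor $H$ on $X$, the pushforward $\mu_*(f^*H)$ must be a non-negative multiple $a\,H_{\p^{n-1}}$ of the hyperplane class. A case split on $a$, combined with intersection-theoretic constraints on the $\mu$-exceptional divisors on $\hat E$ (a higher-dimensional analog of Zariski's negative-definiteness lemma), is designed to force $f$ to be either constant ($d=0$) or generically finite ($d=n-1$), returning one to the two already-treated extremes. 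This last intersection-theoretic step is the main obstacle; a complementary approach is to apply the pullback identity of Lemma \ref{lemma_composition_of_pb} to the relation $\varphi\circ\varphi^{-1}=\id$, which for pseudo-automorphisms forces the correction term supported on $\varphi(\Ind(\varphi))\supset D$ to vanish, and to extract from this vanishing the rigidity needed to exclude intermediate dimensions.
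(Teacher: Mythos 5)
The paper offers no proof of this lemma (it is quoted from \cite{Bedford-Kim_Pseudo}), so your attempt must be judged against the standard argument, and it has a genuine gap: the intermediate range $1\leqslant d\leqslant n-2$, which you yourself identify as ``the main obstacle,'' is left as an unexecuted strategy. Neither the intersection-theoretic sketch on $\p^{n-1}$ nor the appeal to Lemma~\ref{lemma_composition_of_pb} is carried out, and it is not clear either can be made to work as stated. There is also a conflation at the outset: $\overline{\widetilde{\varphi}(U)}$ is the proper image of the exceptional divisor of a \emph{single} blow-up, not the total image $\varphi(p)$. The total image is $\delta_Y(\delta_X^{-1}(p))$ computed on a full resolution of the graph and is in general strictly larger, because the blow-ups needed to resolve the residual indeterminacy of $\widetilde{\varphi}$ along $\Ind(\widetilde{\varphi})\cap E$ contribute further components. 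So your case analysis is being run on the wrong quantity, and even on that quantity it is incomplete.

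The missing idea is purity of the exceptional locus (van der Waerden), applied to the graph rather than to $\Bl_pX$. Let $\Gamma\subset X\times X$ be the normalization of the closure of the graph of $\varphi$, with projections $\pi_1,\pi_2$ to the two factors. Since $X$ is smooth and $\pi_1$ is proper and birational from a normal variety, $\Exc(\pi_1)$ is pure of codimension $1$, and $\pi_1(\Exc(\pi_1))\subset\Ind(\varphi)$. If $p$ is an isolated point of $\Ind(\varphi)$, then any component $E$ of $\Exc(\pi_1)$ meeting $\pi_1^{-1}(p)$ has irreducible image $\pi_1(E)\subset\Ind(\varphi)$ containing $p$, hence $\pi_1(E)=\{p\}$ and $E\subset\{p\}\times X$. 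Consequently $\pi_2|_E$ is finite and $D:=\pi_2(E)$ is a divisor in $X$; for a general $q\in D$ one has $q\notin\Ind(\varphi^{-1})$ (which has codimension at least $2$), so $\pi_2^{-1}(q)$ is a single point of $E$ and $\varphi^{-1}(q)=\pi_1(\pi_2^{-1}(q))=p$. Thus $\varphi^{-1}$ contracts the divisor $D$ to $p$, contradicting the pseudo-automorphism hypothesis. In other words, your case $d=n-1$ is the \emph{only} case that occurs: every component of the fiber of the graph over an isolated indeterminacy point is a divisor mapping finitely into $X$, so the intermediate and zero-dimensional cases never arise, and the two extreme cases you do treat cannot substitute for this purity input.
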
 
 Observe that pseudo-automorphisms  induce invertible maps on the group of classes of divisors on the variety:
 \begin{lemma}\label{lemma_pseudo_aut}
  If $\varphi\colon X\dashrightarrow X$ is a pseudo-automorphism of a smooth variety $X$, then
  \begin{equation*}\label{eq_map_on_NS}
   \varphi^*\colon N^1(X)\to N^1(X)
  \end{equation*}
  is an isomorphism of vector spaces and $(\varphi^n)^* = (\varphi^*)^n$.
 \end{lemma}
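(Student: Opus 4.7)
The plan is to derive both statements from Lemma \ref{lemma_composition_of_pb} by showing that the correction divisor $E$ produced there vanishes in the pseudo-automorphism setting.

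The central observation is the following. Let $X \xleftarrow{p} V \xrightarrow{q} X$ be a birational resolution of $\varphi$, so that $\varphi = q \circ p^{-1}$. I claim that $\Exc(p) = \Exc(q)$. Indeed, if $D \subset V$ is an irreducible $p$-exceptional divisor, then $p(D)$ has codimension at least $2$ in $X$; if $q(D)$ were a divisor, then $\varphi^{-1} = p \circ q^{-1}$ would contract the divisor $q(D)$ onto a codimension $\geqslant 2$ set, contradicting the pseudo-automorphism hypothesis. By symmetry, $\Exc(p) = \Exc(q)$. The same reasoning, combined with the fact that indeterminacy loci of rational maps from smooth varieties to projective ones have codimension at least $2$, yields by induction that every iterate $\varphi^n$ is itself a pseudo-automorphism.

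I now apply Lemma \ref{lemma_composition_of_pb} with $\alpha = \varphi^{-1}$ and $\beta = \varphi$, so that $\alpha \circ \beta = \Id$. It produces a divisor $E$ with
\[
\varphi^*((\varphi^{-1})^* D) - D = E, \qquad \Supp(E) \subseteq \varphi^{-1}(\Ind(\varphi^{-1})).
\]
Writing $\varphi^{-1}(\Ind(\varphi^{-1})) = p(q^{-1}(\Ind(\varphi^{-1})))$ and using that $\Ind(\varphi^{-1})$ has codimension $\geqslant 2$: any divisorial component of $q^{-1}(\Ind(\varphi^{-1}))$ is necessarily $q$-exceptional, hence $p$-exceptional by the claim above, and so $p$ collapses it to a codimension $\geqslant 2$ subset; the non-divisorial part is mapped by the birational morphism $p$ to a codimension $\geqslant 2$ subset as well. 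Therefore $\Supp(E)$ has codimension $\geqslant 2$ in $X$, and being a divisor, $E$ must vanish. The symmetric argument gives $(\varphi^{-1})^* \circ \varphi^* = \Id$, proving that $\varphi^*$ is an isomorphism with inverse $(\varphi^{-1})^*$.

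For the identity $(\varphi^n)^* = (\varphi^*)^n$ I proceed by induction on $n$. Applying Lemma \ref{lemma_composition_of_pb} with $\alpha = \varphi$ and $\beta = \varphi^{n-1}$ yields
\[
(\varphi^{n-1})^*(\varphi^* D) - (\varphi^n)^* D = E_n, \qquad \Supp(E_n) \subseteq (\varphi^{n-1})^{-1}(\Ind(\varphi)).
\]
Since $\varphi^{n-1}$ is itself a pseudo-automorphism, the same codimension analysis applied to a resolution of $\varphi^{n-1}$ shows that this support has codimension $\geqslant 2$, so $E_n = 0$ and the inductive step passes; negative exponents then follow by combining with the invertibility of $\varphi^*$. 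The main technical hurdle throughout is precisely this codimension analysis of $\beta^{-1}(\Ind(\alpha))$, which is exactly where the pseudo-automorphism hypothesis enters, via the equality $\Exc(p) = \Exc(q)$.
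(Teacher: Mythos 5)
Your proof is correct and takes essentially the same route as the paper, whose entire proof of this lemma is the single sentence ``This follows from Lemma~\ref{lemma_composition_of_pb}.'' You supply exactly the details that sentence leaves implicit: the equality $\Exc(p)=\Exc(q)$ for a resolution of a pseudo-automorphism, which forces the correction divisor of Lemma~\ref{lemma_composition_of_pb} to be supported in codimension $\geqslant 2$ and hence to vanish, giving both the invertibility of $\varphi^*$ and the compatibility with iteration.
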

 \begin{proof}
  This follows from Lemma \ref{lemma_composition_of_pb}.
 \end{proof}

\subsection{A special construction of a flop}
In this section we consider the construction of a concrete pseudo-isomorphism and construct the resolution of its graph.

Consider a smooth threefold $X$ and two smooth curves $\Gamma_1$ and $\Gamma_2$ on $X$ such that the intersection $\Gamma_1\cap\Gamma_2$
is a finite set of points and the union $\Gamma_1\cup\Gamma_2$ is a nodal curve.

 Denote by $\delta_1\colon Y_1\to X$ the blow-up of $X$ along the curve $\Gamma_1$. 
 Let $\tG_2\subset Y_1$ be the proper transform of the curve $\Gamma_2$ under $\delta_1$.
 Let $\delta_{12}\colon Y_{12}\to Y_1$ be the blow-up of $Y_1$ along $\tG_2$.
 
 Denote by $\delta_{2+}\colon Y_{2+} \to X$ the blow-up of $X$ along the curve $\Gamma_2$, by $\tG_1$ the proper transform of $\Gamma_1$ to~$Y_{2+}$ and 
 by $\delta_{12+}\colon Y_{21+} \to Y_{2+}$ the blow-up of $Y_{2+}$ in $\tG_1$. Consider the birational map $\tau\colon Y_{12}\to Y_{12+}$ induced by the following diagram:
 \begin{equation*}
  \xymatrix{ Y_{12}\ar[d]_{\delta_{12}}\ar@{-->}[rr]^{\tau} && Y_{21+} \ar[d]^{\delta_{21+}} \\ 
             Y_1\ar[dr]_{\delta_1} && Y_{2+} \ar[ld]^{\delta_{2+}} \\ &X&}
 \end{equation*}
 The map $\tau$  is a pseudo-isomorphism and its restriction to 
 $(\delta_{1}\circ\delta_{12})^{-1}\left(X\setminus \left(\Gamma_1\cap\Gamma_2\right)\right)\subset Y_{12}$ induces an isomorphism 
 to $(\delta_{2+}\circ\delta_{21+})^{-1}\left(X\setminus \left(\Gamma_1\cap\Gamma_2\right)\right)\subset Y_{21+}$.

 For each point $p_i$ in $\Gamma_1\cap\Gamma_2$ we consider a curve $C_i$ defined as the irreducible component 
of $\left(\delta_{1}\circ\delta_{12}\right)^{-1}(p_i)$ which does not lie in the exceptional divisor of $\delta_{12}$.
Analogously, we denote by $C_i^+$ the irreducible component of $\left(\delta_{2+}\circ\delta_{21+}\right)^{-1}(p_i)$
which does not lie in the exceptional divisor of $\delta_{21+}$. These are smooth rational curves 
and curves $C_i$ and $C_j$ (respectively, $C_{i}^+$ and $C_j^+$) do not intersect 
in $Y_{12}$ (respectively $Y_{21+}$) for distinct $i$ and $j$.

Then the map $\tau$ is a pseudo-isomorphism between $Y_{12}$ and $Y_{21+}$; it is a flop, see \cite[Example 1.12]{Flips_and_flops}.
Denote by $f_{1}$ and $f_{2}$ the curve classes of the fibers of the morphisms $\delta_1$ and $\delta_{12}$ over $\Gamma_1$ and $\tG_2$ respectively. Then the following assertion is true:
\begin{proposition}[{\cite[Example 1.12]{Flips_and_flops}}]\label{prop_flop}
 The map $\tau$ is a pseudo-isomorphism.
 The sets of indeterminacy of maps~$\tau$ and~$\tau^{-1}$ consist of disjoint unions of curves:
 \begin{align*}
  \Ind(\tau) = C_1\sqcup C_2 \sqcup \dots \sqcup C_m; &&   \Ind(\tau^{-1}) = C^+_1\sqcup C_2^+ \sqcup \dots \sqcup C^+_m. 
 \end{align*}
 Moreover, the class of each curve $C_i$ and of its direct image under $\tau$ satisfy:
 \begin{align*}
  [C_i] = f_1 - f_2 && \tau_*[C_i] = -[C_i^+].
 \end{align*}
\end{proposition}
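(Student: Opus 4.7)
The plan is to identify $\tau$ with a disjoint union of standard Atiyah flops, one centred above each intersection point $p_i$, and then to compute the two curve classes. Away from the finite set $\{p_1,\dots,p_m\}$ the curves $\Gamma_1$ and $\Gamma_2$ are disjoint, so the two blow-up orders commute and $Y_{12}$ and $Y_{21+}$ are canonically isomorphic over $X\setminus\{p_1,\dots,p_m\}$, with $\tau$ realising this isomorphism there. To analyse the local picture above each $p_i$, I would work in étale coordinates $(x,y,z)$ with $\Gamma_1=\{y=z=0\}$ and $\Gamma_2=\{x=z=0\}$, and compute both sequences of blow-ups explicitly. This shows that in $Y_{12}$ the fibre of $\delta_1\circ\delta_{12}$ over $p_i$ is $C_i\cup D_i$, where $D_i$ is the fibre of $\delta_{12}$ over $q_i\in\tG_2$, that $C_i$ meets $D_i$ transversally at one point, and that the normal bundle of $C_i$ in $Y_{12}$ is $\OP_{\p^1}(-1)\oplus\OP_{\p^1}(-1)$. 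The symmetric computation on $Y_{21+}$ produces $C_i^+$, so $\tau$ is the classical Atiyah flop along $\bigsqcup_i C_i$; its indeterminacy loci are as claimed, and it is a pseudo-isomorphism because these loci contain no divisor.

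For the class $[C_i]=f_1-f_2$, I would apply Lemma~\ref{lemma_pb_of_C_under_bu} to the blow-up $\delta_{12}$ and the fibre $F_i\subset Y_1$ of $\delta_1$ above $p_i$. Because $\Gamma_1\cup\Gamma_2$ is nodal at $p_i$, the curve $F_i$ meets $\tG_2$ transversally at the single point $q_i$, whence $E_{12}\cdot[\tF_i]=1$, where $E_{12}$ is the exceptional divisor of $\delta_{12}$ and $\tF_i=C_i$ is the proper transform. Writing $\delta_{12}^*[F_i]=[C_i]+\mu f_2$ with $\mu\geqslant 0$ as in the lemma, the projection formula gives $0=E_{12}\cdot\delta_{12}^*[F_i]=1-\mu$, hence $\mu=1$. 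Since $\delta_{12}^*[F_i]$ is numerically equal to $f_1$ (a nearby generic fibre of $\delta_1\circ\delta_{12}$ over $\Gamma_1\setminus\Gamma_2$ is sent isomorphically to $F_i$ by $\delta_{12}$), we conclude $f_1=[C_i]+f_2$.

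For $\tau_*[C_i]=-[C_i^+]$, I would work on the common resolution $W\to Y_{12}$ obtained by blowing up $\bigsqcup C_i$; by the local analysis the same $W$ resolves $\bigsqcup C_i^+\subset Y_{21+}$, giving two birational morphisms $r_1\colon W\to Y_{12}$ and $r_2\colon W\to Y_{21+}$ with $\tau=r_2\circ r_1^{-1}$. Each exceptional divisor $E_i$ is isomorphic to $\p^1\times\p^1$ with two rulings, of classes $g_{1,i}$ (contracted by $r_1$ onto $C_i$) and $g_{2,i}$ (contracted by $r_2$ onto $C_i^+$); the normal-bundle identification $N_{C_i/Y_{12}}\cong\OP(-1)\oplus\OP(-1)$ yields $E_i\cdot g_{1,i}=E_i\cdot g_{2,i}=-1$. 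Viewing $r_1^*\colon N_1(Y_{12})\to N_1(W)$ as the dual of $r_{1*}\colon N^1(W)\to N^1(Y_{12})$ and testing against the generators $r_1^*H$ and $E_i$, one finds $r_1^*[C_i]=g_{2,i}-g_{1,i}$. Pushing forward by $r_{2*}$, under which $g_{1,i}\mapsto[C_i^+]$ and $g_{2,i}\mapsto 0$, gives $\tau_*[C_i]=r_{2*}r_1^*[C_i]=-[C_i^+]$. The main obstacle is precisely the sign tracking in this last step: the symmetry $E_i\cdot g_{1,i}=E_i\cdot g_{2,i}=-1$ forces the two rulings to enter with opposite signs in $r_1^*[C_i]$, and this is exactly the mechanism behind the minus sign; the remaining steps amount to the standard Atiyah-flop bookkeeping already implicit in \cite[Example 1.12]{Flips_and_flops}.
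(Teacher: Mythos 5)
Your argument is correct, and it follows the same overall strategy as the paper's sketch --- both proofs hinge on the common resolution obtained by blowing up $\bigsqcup C_i$ in $Y_{12}$ and identifying it with the analogous blow-up of $Y_{21+}$ --- but you justify each step by different means. The paper establishes the isomorphism of the two resolutions abstractly, via the universal property of blow-ups applied to $\delta_1\circ\delta_{12}\circ\Delta$ together with a count of Picard numbers, whereas you obtain it from an explicit local computation in \'etale coordinates at each node, which in addition hands you the normal bundle $\OP_{\p^1}(-1)\oplus\OP_{\p^1}(-1)$ and hence the Atiyah-flop structure for free. For the class computations the paper is terse: it asserts $[C_i]=f_1-f_2$ ``considering the Picard group of $Y_{12}$'' and deduces $\tau_*[C_i]=-[C_i^+]$ from the symmetry $[C_i^+]=f_2-f_1$ under the identification $N_1(Y_{12})\cong N_1(Y_{21+})$ induced by the pseudo-isomorphism. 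You instead derive $[C_i]=f_1-f_2$ from Lemma~\ref{lemma_pb_of_C_under_bu} and the projection formula applied to $\delta_{12}$ and the fibre $F_i$, and you compute $\tau_*[C_i]=r_{2*}r_1^*[C_i]$ directly on the resolution using the two rulings of the exceptional $\p^1\times\p^1$; both computations are sound (your determination of $r_1^*[C_i]=g_{2,i}-g_{1,i}$ by pairing against $r_1^*N^1(Y_{12})$ and the $E_j$ is legitimate since these span $N^1(W)$). The trade-off is that your route is longer but self-contained and makes the sign in $\tau_*[C_i]=-[C_i^+]$ visibly forced by the intersection numbers $E_i\cdot g_{1,i}=E_i\cdot g_{2,i}=-1$, while the paper's route gets the sign more cheaply from the duality between $N^1$ and $N_1$ once $[C_i^+]=f_2-f_1$ is known.
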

\begin{proof}[Sketch of proof]
 Denote by $\Delta\colon V\to Y_{12}$ the blow-up of the variety $Y_{12}$ in the smooth curve $C_1\sqcup\dots\sqcup C_m$ and 
 by~\mbox{$\Delta_+\colon V\dashrightarrow Y_{21+}$} the induced map to $Y_{21+}$. 
 
 We use the universal property of blow-ups (see \cite[Proposition II.7.14]{Hartshorne}).
 First we apply this property to the map $\delta_1\circ\delta_{12}\circ\Delta \colon V\to X$. Since the preimage of the curve $\Gamma_2$ 
 is of pure dimension $2$ and since $V$ is smooth we get that the following map is regular:
 \begin{equation*}
  \left(\delta_{2+}^{-1}\circ\delta_1\circ\delta_{12}\circ\Delta\right) \colon V\to Y_{2+}.
 \end{equation*}
 Repeating this argument we can show that there exists a regular map $ f\colon V \to V_+$, 
 where $V_+$ is a blow-up of the variety $Y_{21+}$ in the smooth curve $C_{1+}\sqcup\dots\sqcup C_{m+}$ such that the following diagram commutes:
 \begin{equation*}
  \xymatrix{ & V \ar[ld]_{\Delta} \ar[r]^{f} \ar[rrd]_{\Delta_+} & V_+ \ar[rd] & \\
   Y_{12} \ar[rrr]^{\tau} &&& Y_{21+}}
 \end{equation*}
 Since Picard numbers of $V$ and $V_+$ are same, this implies that the map $f$ is an isomorphism. 
 
 Considering the Picard group of $Y_{12}$ we can prove that $[C_i] = f_1 - f_2$. 
 Note that since $\tau$ is a pseudo-isomorphism, groups $N^i(Y_{12})$ and $N^i(Y_{21+})$ are isomorphic for $i = 1$. 
 Since $Y_{12}$ is a threefold and group of classes of curves is dual to the divisor class group, we have also the isomorphism for $i = 2$.
 Under this identification we get $[C_{i}^+ ]= f_2 - f_1$, thus, $\tau_*[C_i] = -[C_{i}^+ ]$.
 \end{proof}

\subsection{Dynamical degrees} 
If $\varphi$ is any birational automorphism of a smooth projective variety $X$,
 then we define its dynamical degrees $\lambda_i(\varphi)$ for all $0\leqslant i\leqslant \dim(X)$ as follows:
  \begin{equation*}
   \lambda_i(\varphi) = \lim_{n\to\infty} \left((\varphi^n)^*(H^i)\cdot H^{\dim(X) - i}\right)^\frac{1}{n}, 
  \end{equation*}
 where $H$ is any ample class in $N^1(X)$. The fact that the limit exists and does not depend on the choice of the ample class 
 is proved in \cite{Dinh_Sibony}, \cite{Truong_rel_deg}.
 Dynamical degrees are positive real numbers which are greater than or equal to $1$ and they are birational invariants of the automorphism $\varphi$. 
 The projection formula implies:
 \begin{equation*}
  \lambda_i(\varphi) = \lambda_{\dim(X)-i}(\varphi^{-1}).
 \end{equation*}  
 Dynamical degrees are log-concave, see \cite{Dinh_Nguyen}; i.e. for all indices $0\leqslant i\leqslant \dim(X)$ one has:
  \begin{equation*}
   \lambda_i(\varphi)^2\geqslant \lambda_{i+1}(\varphi)\lambda_{i-1}(\varphi).
  \end{equation*}  
 Note that by log-concavity we have $\lambda_1(\varphi) = 1$ if and only if $\lambda_i(\varphi) = 1$ for all $1\leqslant i\leqslant \dim(X)$. 
 By log-concavity we have also the following inequality $\lambda_1(\varphi)^2\geqslant\lambda_2(\varphi)$ for all birational automorphisms~$\varphi$. 
 If this inequality is strict and $\varphi$ is a pseudo-automorphism, then the action of $\varphi^*$ on the group of classes of divisors has the following property:

 \begin{theorem}[{\cite[Theorem 1, Corollary 3]{Truong}}]\label{thm_Truong_condition}
  Assume that $\varphi\colon X\dashrightarrow X$ is a pseudo-automorphism of a smooth projective variety $X$ satisfying $\lambda_1^2(\varphi)>\lambda_2(\varphi)$.
  Then there exists a non-zero class~\mbox{$\theta_1(\varphi)\in N^1(X)$} such that:
  \begin{enumerate}
   \item[\textup{(1)}]  For any ample class $H$ the limit $\lim_{n\to\infty} \frac{(\varphi^n)^*( H)}{\lambda_1(\varphi)^n}$ exists, is non-zero and 
   proportional to $\theta_1(\varphi);$
   \item[\textup{(2)}] $\varphi^*(\theta_1(\varphi)) = \lambda_1(\varphi)\theta_1(\varphi);$
   \item[\textup{(3)}] The eigenvalue $\lambda_1(\varphi)$ is simple, i.e. there is a $\varphi^*$-invariant decomposition $N_1(X) = \RR \theta_1(\varphi)\oplus V$ .
  \end{enumerate}
 Moreover, the absolute value of any eigenvalue of $\varphi^*$ distinct from $\lambda_1(\varphi)$ 
 is less than or equal to $\sqrt{\lambda_2(\varphi)}$
 \end{theorem}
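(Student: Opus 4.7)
The plan is to analyze the linear operator $\varphi^*$ acting on the finite-dimensional space $N^1(X)$, reducing the problem to extracting Perron--Frobenius-type spectral information from growth rates of dynamical degrees.

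First, I would exploit the pseudo-automorphism hypothesis to reduce to pure linear algebra. By Lemma \ref{lemma_pseudo_aut}, $\varphi^*$ is a linear isomorphism of $N^1(X)$ satisfying $(\varphi^n)^* = (\varphi^*)^n$ for all $n \geq 1$. In particular, the spectral radius of $\varphi^*$ is the limit
\begin{equation*}
\rho(\varphi^*) = \lim_{n \to \infty} \|(\varphi^*)^n\|^{1/n},
\end{equation*}
and since $\deg_1(\varphi^n) = (\varphi^n)^*H \cdot H^{d-1}$ is comparable to the operator norm of $(\varphi^*)^n$ when $H$ is ample, one gets $\rho(\varphi^*) = \lambda_1(\varphi)$.

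Next, I would establish the spectral-gap bound $|\mu| \leq \sqrt{\lambda_2(\varphi)}$ for every eigenvalue $\mu \neq \lambda_1$ of $\varphi^*$. The key tool is the Khovanskii--Teissier inequality on a common smooth resolution $\pi\colon V \to X$, $\pi' = \varphi^n \circ \pi \colon V \to X$ of the graph of $\varphi^n$: applied to the nef classes $\pi'^{*}H$ and $\pi^{*}H$ on $V$, it yields, after pushing forward to $X$, a comparison of the form
\begin{equation*}
\deg_1(\varphi^n)^2 \leq C \cdot \deg_2(\varphi^n)
\end{equation*}
\emph{on any generalized eigenspace} of $\varphi^*$, where the bookkeeping of exceptional corrections relies on Lemma \ref{lemma_composition_of_pb} and on the fact that $\varphi$ is a pseudo-automorphism (so that $(\varphi^n)^{*}H^2$ differs from $((\varphi^n)^{*}H)^2$ only by classes supported on exceptional divisors that turn out to be effective). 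Decomposing $N^1(X) \otimes \CC$ into generalized eigenspaces and inspecting the growth rate on a component with eigenvalue $\mu$ then gives $|\mu|^{2} \leq \lambda_2(\varphi)$, and the hypothesis $\lambda_1^2 > \lambda_2$ forces $|\mu| < \lambda_1$.

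Finally, I would produce the invariant class $\theta_1(\varphi)$ and deduce the remaining statements by a Perron--Frobenius argument. The cone of pseudo-effective classes in $N^1(X)$ is a salient closed convex cone with non-empty interior, and is preserved by $\varphi^*$ thanks to the pseudo-automorphism assumption. A standard Perron--Frobenius-type theorem yields a non-zero eigenvector $\theta_1(\varphi)$ in this cone with eigenvalue $\lambda_1(\varphi)$. The spectral gap established above forces simplicity: any Jordan block or additional eigenvector with eigenvalue $\lambda_1$ would produce a growth of $\|(\varphi^*)^n\|$ incompatible with $\deg_1(\varphi^n) \sim \lambda_1^n$ together with the $\sqrt{\lambda_2}$-bound on the rest of the spectrum. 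Simplicity in turn yields the invariant decomposition $N^1(X) = \RR\, \theta_1(\varphi) \oplus V$ and the convergence $\lambda_1^{-n}(\varphi^n)^{*}H \to c(H)\, \theta_1(\varphi)$ with $c(H) > 0$, the non-vanishing of the projection of $H$ following from a dual Perron--Frobenius on $N_1(X)$ producing a pseudo-effective curve class paired positively with $H$. The main obstacle I expect is making the Khovanskii--Teissier comparison rigorous uniformly in $n$: the quantities $((\varphi^n)^{*}H)^2 \cdot H^{d-2}$ and $\deg_2(\varphi^n)$ differ by exceptional corrections, and although Lemma \ref{lemma_composition_of_pb} supplies the right effectivity statement pointwise, controlling these corrections along a simultaneous resolution of all iterates is the technically delicate step of Truong's argument.
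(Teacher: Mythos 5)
This theorem is imported from Truong's work with a citation and the paper offers no proof of it, so there is nothing internal to compare your argument against; I will assess it against the standard proof (Truong, and Dinh--Sibony in the K\"ahler setting), whose overall architecture you have reproduced: reduction to the linear algebra of $\varphi^*$ acting on $N^1(X)$ via $(\varphi^n)^* = (\varphi^*)^n$ (Lemma \ref{lemma_pseudo_aut}), identification of the spectral radius with $\lambda_1(\varphi)$, a Perron--Frobenius argument on the pseudo-effective cone producing $\theta_1(\varphi)$, and a spectral gap from which simplicity, the invariant complement, and the convergence of $\lambda_1(\varphi)^{-n}(\varphi^n)^*H$ all follow. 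Those parts are sound, including the dual Perron--Frobenius argument for the non-vanishing of the component of $H$ along $\theta_1(\varphi)$.

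The genuine gap is in the spectral-gap step, which is the heart of the theorem. The inequality you extract from Khovanskii--Teissier points the wrong way: applied to the nef classes $\pi^{*}H$ and $(\pi')^{*}H$ on a resolution of the graph of $\varphi^n$, it yields $\deg_1(\varphi^n)^2 \geqslant c\,\deg_0(\varphi^n)\deg_2(\varphi^n)$, i.e.\ log-concavity of the degrees, not $\deg_1(\varphi^n)^2 \leqslant C\deg_2(\varphi^n)$ (which, read on all of $N^1(X)$, would contradict the hypothesis $\lambda_1(\varphi)^2 > \lambda_2(\varphi)$). Moreover the qualifier ``on any generalized eigenspace'' carries no content: the degrees are global quantities and neither inequality localizes to eigenspaces, so this step does not isolate a single dominant eigenvalue. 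What is actually needed is (i) the estimate $\left|(\varphi^n)^*\alpha\cdot(\varphi^n)^*\beta\cdot H^{d-2}\right| \leqslant C\deg_2(\varphi^n)\,\|\alpha\|\,\|\beta\|$ for all $\alpha,\beta\in N^1(X)$ --- this is where the pseudo-automorphism hypothesis, the effectivity of the exceptional corrections as in Lemma \ref{lemma_composition_of_pb}, and the submultiplicativity of $\deg_2$ enter --- together with (ii) the Hodge index theorem, which says that the form $(\alpha,\beta)\mapsto\alpha\cdot\beta\cdot H^{d-2}$ has signature $(1,\rho-1)$. It is this hyperbolicity, not the Khovanskii--Teissier inequality, that bounds the product of the moduli of any two eigenvalues of $\varphi^*$ by $\lambda_2(\varphi)$ and hence forces every eigenvalue other than $\lambda_1(\varphi)$ to have modulus at most $\sqrt{\lambda_2(\varphi)}$. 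As written, your sketch does not exclude two eigenvalues of modulus close to $\lambda_1(\varphi)$, and the simplicity and convergence assertions you derive from the gap collapse with it.
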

 In \cite{Dang_Favre_1} was proved a generalization of this theorem in the case of any birational automorphism. 

 
 In case when the birational automorphism $\varphi$ is regular or if $\dim(X) = 2$, then the class $\theta_1(\varphi)$ is nef. 
 However, even in the case of a pseudo-automorphisms of threefolds this class can intersect some curves negatively.
 
 \begin{lemma}\label{lemma_theta-negative_in_ind}
 Let $\varphi\colon X\dashrightarrow X$ be a pseudo-automorphism of a smooth threefold $X$ 
 such that~\mbox{$\lambda_1(\varphi)^2>\lambda_2(\varphi)$.}
 If $C$ is an irreducible curve and $\theta_1(\varphi)\cdot [C]<0$, then there exists an integer  $N$ such that
 \begin{equation*}
  C\subset\bigcap_{n>N}\Ind(\varphi^n).
 \end{equation*}
\end{lemma}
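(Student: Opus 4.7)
The plan is to apply Truong's theorem (Theorem \ref{thm_Truong_condition}) to realize $\theta_1(\varphi)$ as a limit of rescaled pullbacks of an ample class, and then to contradict the sign hypothesis $\theta_1(\varphi) \cdot [C] < 0$ by intersecting with $[C]$ along this sequence.

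Fix an ample class $H \in N^1(X)$. By Theorem \ref{thm_Truong_condition}(1) the limit
\[
L := \lim_{n\to\infty} \frac{(\varphi^n)^* H}{\lambda_1(\varphi)^n}
\]
exists in $N^1(X)$ and is a non-zero multiple of $\theta_1(\varphi)$. Since each $(\varphi^n)^* H / \lambda_1(\varphi)^n$ is represented by an effective $\RR$-divisor, $L$ lies in the closure of the pseudo-effective cone; fixing the sign of $\theta_1(\varphi)$ so that it is pseudo-effective, we may write $L = c\,\theta_1(\varphi)$ with $c > 0$. Pairing this identity with the class $[C]$ and using the hypothesis $\theta_1(\varphi) \cdot [C] < 0$, one obtains $(\varphi^n)^* H \cdot [C] < 0$ for all sufficiently large $n$.

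The main step is then the following contradiction: suppose there exist infinitely many integers $n$ with $C \not\subset \Ind(\varphi^n)$. For each such $n$, Lemma \ref{lemma_pf_of_curve} gives a decomposition
\[
(\varphi^n)_*[C] = [\widetilde{C}_n] + \sum_{i} \mu_i [T_i],
\]
where $\widetilde{C}_n$ is the proper image of $C$ under $\varphi^n$, the $\mu_i \geq 0$, and the $T_i$ are effective irreducible curves. In particular $(\varphi^n)_*[C]$ is an effective class in $N_1(X)$; this remains true even when $\varphi^n$ happens to contract $C$, in which case the $[\widetilde{C}_n]$ term simply drops out. Since $(\varphi^n)_*$ is defined as the adjoint of $(\varphi^n)^*$ with respect to the perfect pairing between $N^1(X)$ and $N_1(X)$, we deduce
\[
(\varphi^n)^* H \cdot [C] = H \cdot (\varphi^n)_*[C] \geq 0
\]
because $H$ is ample and $(\varphi^n)_*[C]$ is effective. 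This contradicts the strict negativity established above for all large $n$ in the chosen subsequence, hence $C \subset \Ind(\varphi^n)$ for all but finitely many $n$, as required.

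No serious obstacle is anticipated here: once Truong's theorem provides the eigen-class together with its realization as a limit of pullbacks, the argument reduces to combining the ampleness of $H$ with the effectivity of $(\varphi^n)_*[C]$ from Lemma \ref{lemma_pf_of_curve}. The only mild delicacy is fixing the sign convention on $\theta_1(\varphi)$ so that the intersection inequality has the correct direction.
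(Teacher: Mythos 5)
Your proof is correct and follows essentially the same route as the paper's: Theorem \ref{thm_Truong_condition}(1) yields $(\varphi^n)^*H\cdot[C]<0$ for all sufficiently large $n$, and this negativity forces $C\subset\Ind(\varphi^n)$. The only cosmetic difference is that the paper concludes by citing Lemma \ref{lemma_pb_nef_is_almost_nef} directly, whereas you re-derive the same fact in dual form, using the adjunction $(\varphi^n)^*H\cdot[C]=H\cdot(\varphi^n)_*[C]$ together with the effectivity of $(\varphi^n)_*[C]$ from Lemma \ref{lemma_pf_of_curve}.
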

\begin{proof}
 Fix a curve $C$ such that $\theta_1(\varphi)\cdot C <0$ and an ample divisor $H$ on $X$. 
 Then by Theorem \ref{thm_Truong_condition} there exists $N$ such that for all $n>N$ we have the following inequality
 \begin{equation*}
  f^{n*}(H) \cdot C <0.
 \end{equation*}
 By Lemma \ref{lemma_pb_nef_is_almost_nef} this is possible only if $C$ lies in $\Ind(\varphi^n)$ for all $n>N$.
\end{proof}
The set of curves $C$ such that $\theta_1(\varphi)\cdot [C]<0$ is finite hence $\theta_1(\varphi)$ is movable in the sense of \cite{Movable_cone}.

\section{Proofs of Theorems \ref{thm_criterion} and \ref{thm_primitivity}}
\label{sect_proofs}
\subsection{Regularizations of pseudo-automorphisms}
%
%

Here we consider smooth threefolds~$X$ and~$Y$. Let  $\varphi$ be a pseudo-automorphism of $X$ and $\psi$ be a regular automorphism of $Y$.
The birational map $\alpha\colon X\dashrightarrow Y$ is such that $\varphi\circ\alpha = \alpha\circ\psi$.

Consider some resolution of the graph of the map $\alpha$:
\begin{equation}\label{diag_elimination}
   \xymatrix{
  &V  \ar[ld]_{\delta_X} \ar[rd]^{\delta_Y} & \\
  X\ar@{-->}@(dl,ul)^{\varphi} \ar@{-->}[rr]_{\alpha} && Y \ar@(dr,ur)_{\psi}
  }
\end{equation}
We can choose $V$, $\delta_X$ and $\delta_Y$ in  such a way that $\Ind(\alpha) = \delta_X(\Exc(\delta_X))$. Moreover, we can assume that $\delta_X$ 
is a composition of blow-ups along smooth centers.

We consider classes of divisors~$\theta_1(\varphi)$ and $\theta_1(\psi)$ in $N^1(X)$ and $N^1(Y)$ respectively;
inverse images of these classes to $V$ are connected in the following way:
\begin{lemma}\label{lemma_pb_of_theta}
 Let $\varphi$ be a pseudo-automorphism with a regularization $(Y,\psi,\alpha)$ fitting into a diagram of the form \textup{\eqref{diag_elimination}}.
 If $\lambda_1(\varphi)^2>\lambda_2(\varphi)$, then there exists a class $E$ of an effective divisor in $\Exc(\delta_X)$ such that
 \begin{equation*}
  \delta_X^*\theta_1(\varphi) = \delta_Y^*\theta_1(\psi) +E.
 \end{equation*}
\end{lemma}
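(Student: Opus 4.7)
The goal is to prove, after a convenient choice of normalization, the equality $\theta_1(\varphi)=\alpha^*\theta_1(\psi)$ on $X$; the statement on $V$ then follows from the negativity lemma for birational contractions.

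First I note that since $\psi$ is regular, $\theta_1(\psi)$ is nef on $Y$. I then compute $\varphi^*\alpha^*\theta_1(\psi)$ using the conjugation $\psi\circ\alpha=\alpha\circ\varphi$ together with Lemma \ref{lemma_composition_of_pb}. Because $\Ind(\psi)=\emptyset$, the lemma gives $\alpha^*\psi^*\theta_1(\psi)=(\psi\circ\alpha)^*\theta_1(\psi)$ on the nose, and because $\theta_1(\psi)$ is nef it gives $\varphi^*\alpha^*\theta_1(\psi)-(\alpha\circ\varphi)^*\theta_1(\psi)=F$ with $F\geq 0$ supported in $\varphi^{-1}(\Ind(\alpha))$. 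Combining,
\begin{equation*}
\varphi^*\alpha^*\theta_1(\psi)=\lambda_1(\varphi)\,\alpha^*\theta_1(\psi)+F.
\end{equation*}

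The hard part is to argue that $F=0$, and this is where the pseudo-automorphism hypothesis is decisive. I plan to show $\varphi^{-1}(\Ind(\alpha))$ has codimension $\geq 2$ in $X$: picking a smooth resolution $p,q\colon W\to X$ of $\varphi$ (so that $\varphi=q\circ p^{-1}$), the set in question is $p(q^{-1}(\Ind(\alpha)))$. Any divisorial component of $q^{-1}(\Ind(\alpha))$ must lie in $\Exc(q)$; but the pseudo-isomorphism property forces $\Exc(p)=\Exc(q)$, since a $q$-exceptional divisor $D\subset W$ that were not $p$-exceptional would project under $p$ to a divisor contracted by $\varphi$, contradicting $\Exc(\varphi)=\emptyset$. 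Hence those divisorial components of $q^{-1}(\Ind(\alpha))$ are themselves $p$-contracted to codimension $\geq 2$ in $X$. An effective divisor supported in a codimension-$\geq 2$ subset must vanish, so $F=0$.

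Thus $\alpha^*\theta_1(\psi)$ is a nonzero $\lambda_1(\varphi)$-eigenvector of $\varphi^*$ (it is the birational pullback of a nonzero nef class, so it cannot be $\delta_X$-exceptional and hence cannot be killed by $\delta_{X*}$). Since $\lambda_1(\varphi)$ is a simple eigenvalue by Theorem \ref{thm_Truong_condition}(3), we get $\alpha^*\theta_1(\psi)=c\,\theta_1(\varphi)$ for some $c>0$; rescaling $\theta_1(\varphi)$ absorbs $c$ and yields $\theta_1(\varphi)=\alpha^*\theta_1(\psi)$. Pulling back to $V$ and using $\alpha^*=\delta_{X*}\delta_Y^*$:
\begin{equation*}
\delta_X^*\theta_1(\varphi)=\delta_X^*\delta_{X*}(\delta_Y^*\theta_1(\psi))=\delta_Y^*\theta_1(\psi)+E,
\end{equation*}
where the last step is the negativity lemma applied to the $\delta_X$-nef class $\delta_Y^*\theta_1(\psi)$ (nef on $V$ because $\theta_1(\psi)$ is nef on $Y$ and $\delta_Y$ is regular), and $E$ is effective and supported on $\Exc(\delta_X)$, as required.
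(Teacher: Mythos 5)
Your proof is correct and follows essentially the same route as the paper: show $\alpha^*\theta_1(\psi)$ is a nonzero $\lambda_1(\varphi)$-eigenvector of $\varphi^*$ via the conjugation relation and Lemma \ref{lemma_composition_of_pb}, invoke simplicity of the eigenvalue from Theorem \ref{thm_Truong_condition} to identify it with $\theta_1(\varphi)$ up to a positive scalar, then pull back by $\delta_X$ and apply the negativity lemma. In fact you are slightly more careful than the paper at the one step where the pseudo-automorphism hypothesis enters, namely in proving that the correction divisor in $\varphi^*(\alpha^*\theta_1(\psi)) - (\alpha\circ\varphi)^*\theta_1(\psi)$ vanishes because $\Exc(\varphi)=\Exc(\varphi^{-1})=\emptyset$ forces any divisorial component over $\Ind(\alpha)$ to be contracted on both sides of a resolution.
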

\begin{proof}
 First we justify that $\alpha^*\theta_1(\psi) \neq 0$. Suppose by contradiction 
 that~\mbox{$\alpha^*\theta_1(\psi) = \delta_{X*}(\delta_Y^*(\theta_1(\psi)) = 0$}. Since $\delta_X$ is a composition of blow-ups along smooth centers this implies that 
 the divisor class $\delta_Y^*(\theta_1(\psi)$ is supported on a subset of $\Exc(\delta_X)$.
 then there exists a class $E$ of the divisor on $V$ supported on $\Exc(\delta_X)$ such that
 \begin{equation*}
  \delta_Y^* \theta_1(\psi) = E.
 \end{equation*}
 The class $\theta_1(\psi)$ is nef, then so is $E$. By Mori negativity lemma \cite[Lemma 3.39]{Kollar_Mori} since $\delta_{X*}E = 0$ is effective
 we conclude that $E$ is anti-effective.
 However, $\delta_{Y*} E = \theta_1(\psi)$ is a pseudo-effective class. Thus, we get a contradiction.

\medskip
 Now we can assume that $\alpha^*\theta_1(\psi)$ is a non-zero class. We apply Lemma~\ref{lemma_composition_of_pb}:
 \begin{equation*}
  \lambda_1(\psi)\alpha^*\theta_1(\psi) = \alpha^*(\psi^*\theta_1(\psi)) = (\psi\circ\alpha)^*\theta_1(\psi) = 
  (\alpha\circ\varphi)^*\theta_1(\psi) = \varphi^*(\alpha^*\theta_1(\psi)).
 \end{equation*}
 By \cite[Theorem 1.1]{Truong_rel_deg}  we have $\lambda_1(\psi) = \lambda_1(\varphi)$. 
 Since both classes $\theta_1(\psi)$ and $\theta_1(\varphi)$ are pseudo-effective, there exists a number $C>0$ such that
 \begin{equation*}
 \theta_1(\varphi) = C\cdot\alpha^*\theta_1(\psi).  
 \end{equation*}
   Pulling back this equation by $\delta_X$ we obtain an effective divisor $E$ supported in $\Exc(\delta_X)$ such that
 \begin{equation*}
  \delta_X^*\theta_1(\varphi) = C\cdot\delta_Y^*\theta_1(\psi)+ E.
 \end{equation*}
 Since the class $\theta_1(\psi)$ is nef, the class $-E$ is $\delta_X$-nef. Then
 by \cite[Lemma 3.39]{Kollar_Mori} we get that $E$ is effective.
\end{proof}

\begin{lemma}\label{lemma_theta-negative_curves}
 Let $\varphi$ be a pseudo-automorphism with a regularization $(Y,\psi,\alpha)$ fitting into a diagram of the form \textup{\eqref{diag_elimination}}
 and $\lambda_1(\varphi)^2>\lambda_2(\varphi)$.
 Then if~$C$ is an irreducible curve such that $\theta_1(\varphi)\cdot [C]<0$, then $C$ lies in $\Ind(\alpha)$.
\end{lemma}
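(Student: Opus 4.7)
The plan is to prove the contrapositive: assuming that the curve $C$ is not contained in $\Ind(\alpha)$, I will show that $\theta_1(\varphi)\cdot [C]\geqslant 0$, contradicting the hypothesis.

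Since $\Ind(\alpha)=\delta_X(\Exc(\delta_X))$ and $C\not\subset\Ind(\alpha)$, the proper transform $\widetilde{C}$ of $C$ under $\delta_X$ is well-defined on $V$, is not contained in $\Exc(\delta_X)$, and satisfies $\delta_{X*}[\widetilde{C}]=[C]$. By the projection formula I would then write
\begin{equation*}
 \theta_1(\varphi)\cdot [C]=\delta_X^*\theta_1(\varphi)\cdot [\widetilde{C}].
\end{equation*}

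Next I would invoke Lemma \ref{lemma_pb_of_theta} (already at our disposal) to decompose
\begin{equation*}
 \delta_X^*\theta_1(\varphi)=\delta_Y^*\theta_1(\psi)+E,
\end{equation*}
where $E$ is an effective divisor class supported on $\Exc(\delta_X)$. Intersecting with $[\widetilde{C}]$ produces two terms. The first, $\delta_Y^*\theta_1(\psi)\cdot [\widetilde{C}]$, is non-negative: $\psi$ is a regular automorphism of the smooth projective variety $Y$, so $\theta_1(\psi)$ is nef by the remark preceding Lemma \ref{lemma_theta-negative_in_ind}, and nefness is preserved by regular pullback. The second term, $E\cdot [\widetilde{C}]$, is non-negative because $E$ is an effective divisor whose support is contained in $\Exc(\delta_X)$ while $\widetilde{C}$, being the proper transform of $C$, lies outside that support; hence each irreducible component of $E$ meets $\widetilde{C}$ properly (or not at all), yielding a non-negative contribution.

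Adding the two contributions gives $\theta_1(\varphi)\cdot [C]\geqslant 0$, which contradicts the assumption. Therefore $C$ must lie in $\Ind(\alpha)$. The main obstacle in writing this cleanly is ensuring that the projection formula is applied to a curve class that behaves well under $\delta_X$, which is why we use the proper transform rather than the total transform; and verifying that the effective divisor $E$ provided by Lemma \ref{lemma_pb_of_theta} really has support disjoint from $\widetilde{C}$, which is guaranteed because $\widetilde{C}\not\subset\Exc(\delta_X)$.
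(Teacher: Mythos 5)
Your proof is correct and is essentially the paper's argument run in contrapositive form: the paper takes a lift $\widetilde{C}$ of $C$, applies the projection formula together with the decomposition $\delta_X^*\theta_1(\varphi)=\delta_Y^*\theta_1(\psi)+E$ from Lemma \ref{lemma_pb_of_theta}, and deduces from $\theta_1(\varphi)\cdot[C]<0$ that $E\cdot[\widetilde{C}]<0$, forcing $\widetilde{C}\subset\Supp(E)\subset\Exc(\delta_X)$ and hence $C\subset\Ind(\alpha)$. The same two ingredients (nefness of $\delta_Y^*\theta_1(\psi)$ and effectivity of $E$) drive both versions, so there is no substantive difference.
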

\begin{proof}
 Choose an irreducible curve $\widetilde{C}$ in $\delta_X^{-1}(C)\subset V$ such that $\delta_X(\widetilde{C}) = C$. 
 Then there exists a positive number $m$ such that $\delta_{X*}([\widetilde{C}]) = m\cdot [C]$.
 
 By the projection formula and Lemma \ref{lemma_pb_of_theta} we get the following
 \begin{equation*}
  (\delta_Y^*(\theta_1(\psi)) + E)\cdot [\widetilde{C}] = \delta_X^*(\theta_1(\varphi))\cdot[\widetilde{C}] = m\cdot\theta_1(\varphi)\cdot[C] <0.
 \end{equation*}
 Since $\theta_1(\psi)$ is a nef class this implies that $E\cdot [\widetilde{C}]<0$. Since $E$ is an effective exceptional divisor of $\delta_X$
 this is possible only if $\widetilde{C}$ is included in the support of $E$. As $E$ is contracted by $\delta_X$, we get 
 \begin{equation*}
  C = \delta_X(\widetilde{C}) \subset \delta_X(E) \subset \Ind(\alpha).
 \end{equation*}
 Thus, we get that a $\theta_1(\varphi)$-negative curve lies in the indeterminacy locus of any regularization map.
\end{proof}

\begin{lemma}\label{lemma_cond_implies}
 Assume that $X$ and $Y$ are smooth varieties, $\dim(X) = 3$, $\varphi\colon X\dashrightarrow X$ is a pseudo-auto\-mor\-phism, 
 $\alpha\colon X\dashrightarrow Y$ is a rational map and $\psi\colon Y\dashrightarrow Y$ is a birational automorphism
  such that $\alpha\circ \varphi = \psi\circ\alpha$. Let either $\psi$ be a  regular automorphism or $Y$ be a surface.
 If $D$ is a nef divisor class on $Y$ and $C$ is a curve on $X$ such that $\alpha^*D\cdot[C]<0$ and $C\subset\bigcap_{i>N}\Ind(\varphi^i)$, 
 then for any $m>0$ except a finite set we have~\mbox{$C\subset\Ind(\varphi^{-m})$.}
\end{lemma}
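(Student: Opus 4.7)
The plan is to proceed by contradiction: assume $S = \{m > 0 : C \not\subset \Ind(\varphi^{-m})\}$ is infinite, and derive a contradiction with $C \subset \bigcap_{i > N} \Ind(\varphi^i)$.

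The starting point is Lemma \ref{lemma_pb_nef_is_almost_nef}: since $D$ is nef on $Y$ and $\alpha^*D \cdot [C] < 0$, one has $C \subset \Ind(\alpha)$. Next I show that for every $m \in S$, the proper image $C_m := \check{\varphi}^{-m}(C)$ lies in $\Ind(\alpha)$. The commutation $\alpha \circ \varphi^{-m} = \psi^{-m} \circ \alpha$ expresses a single rational map in two ways; in both hypotheses $\Ind(\psi^{-m})$ is of codimension $\dim Y$ in $Y$ (empty in the regular case, a finite set of points in the surface case), so composing with $\psi^{-m}$ on the left cannot cancel the curve-indeterminacy of $\alpha$ along $C$, and one obtains $C \subset \Ind(\alpha) \subseteq \Ind(\psi^{-m} \circ \alpha) = \Ind(\alpha \circ \varphi^{-m})$. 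On the other hand, for $m \in S$ the map $\varphi^{-m}$ is defined at the generic point $\eta_C$ and sends it to the generic point $\eta_{C_m}$ of $C_m$; the only way $\alpha \circ \varphi^{-m}$ can fail at $\eta_C$ is for $\alpha$ to be undefined at $\eta_{C_m}$, so $C_m \subset \Ind(\alpha)$.

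Since $\Ind(\alpha)$ has codimension $\geq 2$ in the smooth threefold $X$, it has only finitely many irreducible components, so $\{C_m : m \in S\}$ is a finite collection of subvarieties. Pigeonhole produces $m_1 < m_2$ in $S$ with $C_{m_1} = C_{m_2} = W_j$ and the difference $d := m_2 - m_1$ arbitrarily large; in particular one can arrange $d > N$.

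The final step derives the contradiction $C \not\subset \Ind(\varphi^d)$. Here I use that $\varphi^{-m_2}$ is itself a pseudo-automorphism, so its restriction yields an isomorphism $X \setminus \Ind(\varphi^{-m_2}) \cong X \setminus \Ind(\varphi^{m_2})$. Since $\varphi^{-m_2}$ sends $\eta_C$ to $\eta_{W_j}$, the point $\eta_{W_j}$ lies in $X \setminus \Ind(\varphi^{m_2})$, so $W_j \not\subset \Ind(\varphi^{m_2})$, and $\varphi^{m_2}$ sends $\eta_{W_j}$ back to $\eta_C$. Then $\varphi^d = \varphi^{m_2} \circ \varphi^{-m_1}$ sends $\eta_C \mapsto \eta_{W_j} \mapsto \eta_C$, so $C \not\subset \Ind(\varphi^d)$; in fact $\check{\varphi}^d(C) = C$. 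This contradicts $C \subset \Ind(\varphi^d)$, which follows from $d > N$. The most delicate point I anticipate is justifying the no-cancellation claim $\Ind(\alpha) \subseteq \Ind(\psi^{-m} \circ \alpha)$ in the surface case: cancellation of indeterminacy can occur for compositions of birational maps in general, and one must rule it out along the specific curve $C$.
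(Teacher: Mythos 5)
Your proof follows the paper's argument essentially step for step: $C\subset\Ind(\alpha)$ because $D$ is nef and $\alpha^*D\cdot[C]<0$, then all proper images $C_m$ are forced into the finitely many curves of $\Ind(\alpha)$ via $\Ind(\psi^{-m}\circ\alpha)=\Ind(\alpha\circ\varphi^{-m})$ and the fact that $\Ind(\psi^{-m})$ contains no curves, and finally a pigeonhole argument produces a large $d$ with $\check{\varphi}^{d}(C)=C$, contradicting $C\subset\Ind(\varphi^{d})$. The no-cancellation point you flag as delicate is handled in the paper by exactly the same one-line justification (the indeterminacy locus of $\psi^{-m}$ has no curve components), so your write-up is at least as careful as the original, and your pigeonhole step actually fills in a transition the paper states rather tersely.
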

\begin{proof}
  Assume that $C$ is an irreducible $\alpha^*D$-negative curve and
 the following set is infinite:
 \begin{equation*}
  I = \{ m\in \ZZ | \ C\not\subset \Ind(\varphi^{m}) \}.
 \end{equation*} 
 By assumption we have that $I$ is included in $\{ m\leqslant N\}$ for some integer $N>0$.
 
 For each $-m\in I$ we have that $C$ does not lie in $\Ind(\varphi^{-m})$. Denote by $C_{-m}$ the proper image of the curve~$C$ under $\varphi^{-m}$ for each $-m\in I$.
 Since $\varphi$ is a pseudo-automorphism the curve $C_{-m}$ does not lie in~$\Ind(\varphi^{m})$ and its proper image under $\varphi^m$ is $C$.
 
 By Lemma \ref{lemma_theta-negative_curves} the curve $C$ lies in $\Ind(\alpha)$. 
 Since $\Ind(\psi^n)$ does not contain curves for any $n$ this implies
 that
 \begin{equation*}
  C\subset \Ind(\psi^{-m}\circ\alpha) = \Ind(\alpha\circ\varphi^{-m}),
 \end{equation*}
 for all $-m\in I$. Thus, the curve $C_{-m}$ lies in $\Ind(\alpha)$ for all $-m$ in $I$.

 Since $\Ind(\alpha)$ contains only finite number of curves there is some $-m\in I$ such that $C_{-m} = C$.
 
 The proper image of $C_{-m}$ under $\varphi^m$ is $C$.
 Then $C = C_{-m}$ does not lie in $\Ind(\varphi^m)$ and also in $\Ind(\varphi^{k m})$ for all $k>0$. 
 However, this contradicts our assumption and concludes the proof.
\end{proof}

Now we are ready to prove the criterion for non-regularizable automorphisms.
\begin{proof}[Proof of Theorem \textup{\ref{thm_criterion}}]
 By Lemma \ref{lemma_theta-negative_in_ind} we only have to prove that if there is a curve $C$ on $X$ such that  Condition \ref{condition} is satisfied,
 then the pseudo-automorphism $\varphi$  can not be regularized. 
 
 By contraction we assume that there exists a regularization of $\varphi$ as on the 
 diagram \eqref{diag_elimination}. By Lemma \ref{lemma_pb_of_theta} we get that $\theta_1(\varphi) = \alpha^*\theta_1(\psi)$. 
 Moreover, by \cite{Diller_Favre} the class $\theta_1(\psi)$ is nef. Then Lemma \ref{lemma_cond_implies}  applied to the 
 divisor $\theta_1(\psi)$ and the curve $C$ from Condition \ref{condition} leads us to the contradiction.
%
%
%
%
%
\end{proof}

\subsection{Proof of Theorem \ref{thm_primitivity}}
 We start with considering the situation when $f$ is algebraically stable i.e. when  we have an equality $(f^*)^n = (f^n)^*$ of endomorphisms of $N^1(S)$ 
 for all integers $n\in \ZZ$. In fact, all birational automorphisms of surfaces are conjugated to algebraically stable ones by \cite[Theorem~0.1]{Diller_Favre}.
 
 If the pseudo-automorphism $\varphi$ preserves the fibration $\pi$, then we can construct a good model of $S$.
 \begin{lemma}\label{lemma_phi_pseudo_f_AS}
 Assume that $\varphi\colon X \dashrightarrow X$ is a pseudo-automorphism of a smooth threefold, $\pi\colon X \dashrightarrow S$ is a dominant
 rational map to a smooth surface $S$ and $f\colon S \dashrightarrow S$ is a birational  automorphism of $S$ such that $\pi\circ \varphi = f\circ \pi$. 
 Then there exists a birational morphism $\delta\colon \widetilde{S}\to S$ such that the automorphism $\widetilde{f} = \delta^{-1}\circ f\circ \delta$ of $\widetilde{S}$ 
 is algebraically stable and if we denote $\widetilde{\pi} = \delta^{-1}\circ\pi$, then
 \begin{equation*}
  \left(\widetilde{f}\circ\widetilde{\pi}\right)^* = \widetilde{\pi}^*\circ \widetilde{f}^*.
 \end{equation*}
 \end{lemma}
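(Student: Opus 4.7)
My plan is to obtain $\delta$ in two stages. The first stage is a direct invocation of \cite[Theorem~0.1]{Diller_Favre} applied to the birational surface map $f$: this produces a birational morphism $\delta\colon \widetilde{S}\to S$ for which $\widetilde{f} = \delta^{-1}\circ f\circ \delta$ is algebraically stable. Setting $\widetilde{\pi} = \delta^{-1}\circ \pi$, the commutation $\widetilde{f}\circ \widetilde{\pi} = \widetilde{\pi}\circ \varphi$ follows immediately from the chain of equalities $\widetilde{f}\circ\widetilde{\pi} = \delta^{-1}\circ f\circ\delta\circ\delta^{-1}\circ\pi = \delta^{-1}\circ\pi\circ\varphi = \widetilde{\pi}\circ\varphi$. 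This disposes of the first half of the statement; the nontrivial content is the pullback identity.

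To establish the pullback identity I would apply Lemma \ref{lemma_composition_of_pb} to the composition $\widetilde{f}\circ \widetilde{\pi}$: for any nef class $D\in N^1(\widetilde{S})$ this yields
\begin{equation*}
\widetilde{\pi}^*(\widetilde{f}^*D) - (\widetilde{f}\circ\widetilde{\pi})^*D = E,
\end{equation*}
where $E$ is an effective divisor on $X$ supported in $\widetilde{\pi}^{-1}(\Ind(\widetilde{f}))$. Since $\widetilde{f}$ is a birational self-map of a smooth surface, $\Ind(\widetilde{f})$ consists of finitely many points, so $\widetilde{\pi}^{-1}(\Ind(\widetilde{f}))$ is generically a finite union of one-dimensional fibers and therefore has codimension two in the threefold $X$. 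Consequently $E$ vanishes unless some irreducible divisor $F\subset X$ is $\widetilde{\pi}$-contracted to a single point of $\Ind(\widetilde{f})$.

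To rule out such an $F$, I would refine the choice of $\delta$. The images under $\pi$ of the finitely many $\pi$-contracted divisors of $X$ form a finite union of curves and points in $S$; one blows up $S$ suitably at the offending points, and at their infinitely near centers if needed, before applying Diller--Favre, so that on the resulting model $\widetilde{S}$ no $\widetilde{\pi}$-contracted divisor of $X$ has image equal to a point of $\Ind(\widetilde{f})$. Once this is arranged, $E = 0$ for every nef $D$, and the identity extends to all of $N^1(\widetilde{S})$ by linearity.

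The principal obstacle is the compatibility between these two conditions on $\delta$: the preliminary blow-ups needed to handle the geometry of $\pi$ may destroy algebraic stability, while a subsequent Diller--Favre modification can introduce new indeterminacy points. The resolution exploits that $\varphi$ is a pseudo-automorphism, so $\varphi$ sends divisors to divisors and permutes the finite set of $\widetilde{\pi}$-contracted divisors of $X$ up to change of contraction type; the possible positions of their images in $\widetilde{S}$ are therefore controlled under iteration, and alternating preliminary blow-ups with Diller--Favre must terminate after finitely many steps at a $\delta$ meeting both requirements.
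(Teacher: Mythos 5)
Your overall strategy matches the paper's — combine the Diller--Favre theorem with extra blow-ups of $S$ so that Lemma \ref{lemma_composition_of_pb} applies with vanishing error term — but the final step, which you correctly identify as the crux, is left as an unproved claim. The assertion that ``alternating preliminary blow-ups with Diller--Favre must terminate after finitely many steps'' is not an argument: you give no measure that decreases, and the pseudo-automorphism property of $\varphi$ that you invoke does not obviously control where the images of the contracted divisors land on the successive Diller--Favre models. As written, the circularity you flag (new indeterminacy points appearing after each modification) is not resolved.

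The paper avoids the alternation entirely by aiming for a stronger but \emph{stable} condition and ordering the two constructions correctly. Let $\Gamma$ be a resolution of the graph of $\pi$ with projections $p\colon\Gamma\to X$ and $q\colon\Gamma\to S$; the set $Z$ of points $z\in S$ with $\dim q^{-1}(z)=2$ is finite, and one first blows up $S$ along $Z$ to get $\delta'\colon S'\to S$, so that no divisor of $X$ has image a point of $S'$. Only then does one apply Diller--Favre to the induced map on $S'$, obtaining $\delta''\colon\widetilde S\to S'$. The key observation you are missing is that the condition ``no divisor of $X$ is contracted by $\widetilde\pi$ to a point'' is preserved under any further blow-up of the base: if a divisor dominates a curve in $S'$, its image in $\widetilde S$ is still a curve. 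Hence the Diller--Favre modification cannot recreate the problem, no matter where its new indeterminacy points sit, and in particular no divisor of $X$ maps into $\Ind(\widetilde f)$. With that, Lemma \ref{lemma_composition_of_pb} gives $\widetilde\pi^*\circ\widetilde f^* = (\widetilde f\circ\widetilde\pi)^*$ directly (for every class, with no need to restrict to nef $D$ and extend by linearity, since the error divisor is supported in a codimension-two set and hence vanishes identically). If you replace your target condition ``no contracted divisor lands in $\Ind(\widetilde f)$'' by ``no contracted divisor lands on any point'' and perform the $\pi$-blow-ups before Diller--Favre, your argument closes.
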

\begin{proof}
 Denote by $\Gamma$ the resolution of singularities of the graph of $\pi\colon X\dashrightarrow S$ and by $p\colon \Gamma\to X$ and $q\colon \Gamma\to S$ 
 the projections to $X$ and $S$ respectively:
 \begin{equation*}
  \xymatrix{
  &\Gamma\ar[ld]_{p}\ar[rd]^{q} & \\X\ar@{-->}[rr]^{\pi} &&S
  }
 \end{equation*}
 Consider the set of points $Z$ in $S$ such that for any $z\in Z$ the fiber $q^{-1}(z)$ is a divisor in $\Gamma$. 
 Since $\Gamma$ is irreducible $Z$ is finite. Denote by $\delta'\colon S'\to S$ the blow-up of $S$ in $Z$. Then by \cite[Theorem 0.1]{Diller_Favre} there 
 exists a birational morphism $\delta''\colon \widetilde{S}\to S'$ such that the automorphism $\widetilde{f}\colon \widetilde{S}\dashrightarrow \widetilde{S}$
 induced by $f$ is algebraically stable.
 
 Denote by $\delta$ the composition $\delta'\circ \delta''$ and by $\widetilde{\pi}$ the composition $\delta^{-1}\circ\pi$ and consider the following diagram:
 \begin{equation*}
  \xymatrix{
  X\ar@{-->}[r]^{\varphi} \ar@{-->}[d]_{\widetilde{\pi}} &X \ar@{-->}[d]^{\widetilde{\pi}}\\ \widetilde{S} \ar@{-->}[r]^{\widetilde{f}} &\widetilde{S}
  }
 \end{equation*}
  By construction the proper image of any divisor on $X$ is not a point on $\widetilde{S}$. In particular, the proper image of any divisor on $X$
  does not lie in $\Ind(\widetilde{f})$. Thus, by Lemma \ref{lemma_composition_of_pb} we get that $\widetilde{\pi}^*\circ \widetilde{f}^* = (\widetilde{f}\circ\widetilde{\pi})^*$.
\end{proof}

%
%
%
%

This lemma allows us to prove Theorem \textup{\ref{thm_primitivity}}.
\begin{proof}[Proof of Theorem \textup{\ref{thm_primitivity}}]
  Assume that there exists a surface $S$,  a rational map $\pi\colon X \dashrightarrow S$ and a birational automorphism~\mbox{$f\colon S\dashrightarrow S$} 
 such that~\mbox{$\pi\circ \varphi = f\circ\pi$}. By \cite[Theorem 1.1]{Truong_rel_deg} 
 since the relative dimension of~$\pi$ equals 1 we have the following equality:
 \begin{equation*}
 1<\lambda_1(\varphi) = \lambda_1(f). 
 \end{equation*}

 By~Lemma \ref{lemma_phi_pseudo_f_AS} we can replace $S$ by its birational model such that $\pi^*\circ f^* = (f\circ\pi)^*$ and $f$ is algebraically stable.
 Since $\varphi$ is pseudo-automorphism no divisor in $X$ maps under $\varphi$ to a component of $\Ind(\pi)$.
 Then by Lemmas \ref{lemma_composition_of_pb} and \ref{lemma_phi_pseudo_f_AS} we get the following.
 \begin{equation}\label{eq_pullback_for_nonprimitive}
 \begin{split}
  \varphi^*(\pi^*(\theta_1(f))) = (\pi\circ\varphi)^*(\theta_1(f)) &= (f\circ\pi)^*(\theta_1(f)) =\\&= \pi^*(f^*(\theta_1(f))) 
  = \lambda_1(f)\cdot\pi^*(\theta_1(f))= \lambda_1(\varphi)\cdot\pi^*(\theta_1(f)).
 \end{split}
 \end{equation}
 Then Theorem \ref{thm_Truong_condition} implies that the class~\mbox{$\theta_1(\varphi)$} is proportional to $\pi^*(\theta_1(f))$
 with strictly positive coefficient.
 Since the divisor $\theta_1(f)$ is a nef divisor we get the contradiction 
 with Condition \ref{condition} by Lemma \ref{lemma_cond_implies}. 
\end{proof}

 \section{Blanc's pseudo-automorphism} 
\label{sect_Blanc}

\subsection{Construction} \label{subsect_blank_example} This family of pseudo-automorphisms is described in the paper of Blanc~\cite{Blanc_Pseudo}.
We recall the construction of the pseudo-automorphism only in dimension $3$, though, in other dimensions everything is similar.
We consider a smooth cubic hypersurface $Q$ in $\p^3$. To each smooth point $p\in Q$ we associate a birational automorphism of the projective
space
\begin{equation*}
 \sigma_p\colon \p^3 \dashrightarrow \p^3.
\end{equation*}
If $L$ is a general line passing through $p$ and intersecting $Q$ in three
distinct points $p$, $q_1$, $q_2$, then $\sigma_p|_L$ is a unique involution of $L\cong\p^1$ such that $\sigma_p|_L(q_i) = q_i$ for $i = 1,2$.
Thus, $\sigma_p$ is a birational involution of~$\p^3$, it preserves pointwise an open subset of $Q$ and its indeterminacy locus consists 
of the point $p$ and an irreducible curve~\mbox{$\Gamma\subset Q$} of degree 6, see \cite[Section 2]{Blanc_Pseudo}.

Consider now $k$  distinct points $p_1,\dots, p_k$ on $Q$; denote by $\Gamma_1, \dots, \Gamma_k$ the curves
in the base loci of the associated involutions $\sigma_{1},\dots,\sigma_{k}$. 
Denote by $\delta\colon X\to \p^3$ the successive blow-ups of points $p_1,\dots,p_k$,
then the proper transforms of curves $\Gamma_1,\Gamma_2, \dots, \Gamma_k$. Also denote by $\hat{\sigma}_{i}$ the birational
involution of $X$ induced by~$\sigma_i$:
\begin{equation*}
 \hat{\sigma}_{i}\colon X\dashrightarrow X.
\end{equation*}

We introduce the following condition on the points $p_1,\dots,p_k$ on a smooth cubic surface $Q$ in $\p^3$:
\begin{align}\label{eq_generality_of_Blanc_points}
 p_i\not\in \Gamma_j \ \ \forall i\ne j && |\Gamma_i\cap\Gamma_j|=6 \ \ \forall i\ne j  && \Gamma_i\cap\Gamma_j\cap\Gamma_k = \emptyset\ \ \forall \text{ distinct indices }i,j,k.
\end{align}
The condition $|\Gamma_i\cap\Gamma_j|=6$ means that $\Gamma_i$ and $\Gamma_j$ intersect transversally on the surface $Q$.
It is proved in~\cite{Blanc_Pseudo} that Condition \eqref{eq_generality_of_Blanc_points} is satisfied for a general set of points $p_1,\dots,p_k\in Q$. 
This gives us a construction of a pseudo-automorphism on a smooth rational threefold with dynamical degree greater than~1.
\begin{theorem}[{\cite[Theorem 1.2]{Blanc_Pseudo}}] \label{thm_blanc}
 Assume  Condition \eqref{eq_generality_of_Blanc_points} is satisfied.
 Then the composition of involutions $\hat{\sigma}_{1}\circ\dots\circ\hat{\sigma}_{k}$ 
 defines a pseudo-automorphism $\varphi\colon X\dashrightarrow X$. 
 If~\mbox{$k\geqslant 3$}, then we have  $\lambda_1(\varphi)=\lambda_2(\varphi)>1$.
\end{theorem}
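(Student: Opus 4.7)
The plan has three parts: show each $\hat\sigma_i$ is a pseudo-automorphism of $X$; deduce the same for the composition $\varphi$; compute $\lambda_1(\varphi)$ and $\lambda_2(\varphi)$.

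For the single involution, I would work in affine coordinates centred at $p_i$ and verify directly from the formula defining $\sigma_i$ that it is given by a linear system of cubics whose base locus is exactly $\{p_i\}\cup\Gamma_i$, with multiplicity $2$ at $p_i$ and multiplicity $1$ along $\Gamma_i$. From this one sees that after blowing up $p_i$ and then the strict transform of $\Gamma_i$, the base locus of $\sigma_i$ is resolved; the inverse map is again $\sigma_i$, so the same resolution works on the target and $\sigma_i$ lifts to a pseudo-automorphism of this intermediate blow-up. The generality conditions \eqref{eq_generality_of_Blanc_points} guarantee that the remaining centres $p_j,\Gamma_j$ (for $j\neq i$) are disjoint from $\{p_i\}\cup\Gamma_i$ and meet the $\hat\sigma_i$-exceptional divisors transversally, so further blow-ups produce $X$ without spoiling the pseudo-automorphism property of $\hat\sigma_i$. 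A composition of pseudo-automorphisms of a smooth threefold is again a pseudo-automorphism: if $D$ were a divisor contracted by $\alpha\circ\beta$, its strict transform $\beta(D)$ would be a codimension-one subvariety contained in $\Ind(\alpha)$, contradicting Lemma \ref{lemma_pseudo-aut_ind_consists_of_curves}. Iterating yields that $\varphi$ is a pseudo-automorphism of $X$.

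For the dynamical degrees, I would record the action of each $\hat\sigma_i^*$ on $N^1(X)$ in the basis $H, E_1,\dots,E_k,F_1,\dots,F_k$. The multiplicity analysis gives
\begin{equation*}
\hat\sigma_i^*(H)=3H-2E_i-F_i,\quad \hat\sigma_i^*(E_j)=E_j,\ \hat\sigma_i^*(F_j)=F_j\ (j\neq i),
\end{equation*}
while the images of $E_i$ and $F_i$ are determined by the involution identity $(\hat\sigma_i^*)^2=\id$ of Lemma \ref{lemma_pseudo_aut} together with invariance of the proper transform $\widetilde{Q}=3H-\sum E_j-\sum F_j$ (fixed by each $\hat\sigma_i^*$ because $\sigma_i$ fixes $Q$ pointwise on a dense open subset). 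The matrix of $\varphi^*=\hat\sigma_k^*\circ\cdots\circ\hat\sigma_1^*$ is thus fully explicit. A direct eigenvalue analysis (as in \cite[Proposition 2.3]{Blanc_Pseudo}) shows that for $k\geq3$ its spectral radius strictly exceeds $1$, giving $\lambda_1(\varphi)>1$. For the equality $\lambda_1(\varphi)=\lambda_2(\varphi)$ the key input is that $\varphi^*$ preserves the intersection pairing and fixes $[\widetilde{Q}]$; multiplication by $[\widetilde{Q}]$ then yields a $\varphi^*$-equivariant linear map $N^1(X)\to N^2(X)$, and an explicit check on the above basis shows it is an isomorphism, so $\varphi^*$ acts with identical spectra on $N^1(X)$ and $N^2(X)$.

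The main obstacle is the spectral computation itself. The threshold $k\geq3$ enters precisely here: for $k\leq2$ one can verify directly that $\varphi^*$ has only eigenvalues of modulus one, while for $k\geq3$ the product of three or more involutions yields an explicit invariant subspace on which the characteristic polynomial is reciprocal with a root of modulus strictly greater than~$1$. Identifying this subspace (generated, for instance, by $H$ together with appropriate sums of exceptional divisors dictated by the cyclic pattern of the composition) and estimating its characteristic polynomial constitutes the computational heart of the proof.
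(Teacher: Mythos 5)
First, a framing point: the paper does not prove this statement at all — it is imported verbatim from Blanc (\cite[Theorem 1.2]{Blanc_Pseudo}, with the $N^1$-action quoted in Lemma \ref{lemma_action_of_sigma} from \cite[Proposition 2.3]{Blanc_Pseudo}). So there is no in-paper proof to compare against; your outline should be judged against Blanc's argument, whose broad shape (resolve the cubic linear system defining $\sigma_i$, compute $\hat\sigma_i^*$ on $N^1(X)$, take the spectral radius of the product) it does follow. Your derivation of $\hat\sigma_i^*(E_i)$ and $\hat\sigma_i^*(F_i)$ from $(\hat\sigma_i^*)^2=\id$, $\hat\sigma_i^*(H)=3H-2E_i-F_i$ and invariance of $\widetilde Q=3H-\sum E_j-\sum F_j$ is a nice touch and does reproduce Lemma \ref{lemma_action_of_sigma}.

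There are, however, three genuine gaps. (1) You assert that the centres $\Gamma_j$ ($j\neq i$) are \emph{disjoint} from $\Gamma_i$; Condition \eqref{eq_generality_of_Blanc_points} says the opposite, $|\Gamma_i\cap\Gamma_j|=6$. This is not cosmetic: the intersection points are precisely why the order of the blow-ups matters, why $X=X_1$ differs from $X_i$ by the flops of Proposition \ref{prop_flop}, and why $\hat\sigma_i$ is only a pseudo-automorphism of $X$ rather than a regular one. The step ``further blow-ups produce $X$ without spoiling the pseudo-automorphism property'' is exactly the nontrivial content of \cite[Proposition 2.2]{Blanc_Pseudo} and cannot be dispatched with a false disjointness claim. (2) Your route to $\lambda_1(\varphi)=\lambda_2(\varphi)$ rests on ``$\varphi^*$ preserves the intersection pairing,'' which is false for pseudo-automorphisms of threefolds: a flop in a curve $C$ changes products of divisor classes by a correction proportional to $(D_1\cdot C)(D_2\cdot C)$. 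In this example the correction against $[\widetilde Q]$ happens to vanish because every flopped curve has class $f_i-f_j$ and $\widetilde Q\cdot f_i=1$ for all $i$, so your equivariance claim is salvageable — but only after this computation, which you do not make. The standard argument is shorter: $\lambda_2(\varphi)=\lambda_1(\varphi^{-1})$, and $\varphi^{-1}=\hat\sigma_k\circ\dots\circ\hat\sigma_1$ is a composition of the same shape with the points relabelled, or equivalently the characteristic polynomial on the invariant subspace $\langle\nu_1,\dots,\nu_k\rangle$ is reciprocal. (3) The quantitative heart of the theorem — that the spectral radius of $\varphi^*$ restricted to $\langle\nu_1,\dots,\nu_k\rangle$ exceeds $1$ exactly when $k\geqslant 3$ — is acknowledged but not carried out; without it the statement $\lambda_1(\varphi)>1$ is not established.
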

 We can represent the pseudo-automorphism $\varphi$ as a composition of simpler birational maps.

 We denote by $\delta_i\colon X_i \to \p^3$ the successive blow-ups of points $p_1,\dots,p_k$,
 then the proper transforms of curves $\Gamma_i,\Gamma_1, \dots,\Gamma_{i-1},\Gamma_{i+1},\dots, \Gamma_k$.
 Note that $X_1 = X$ and $\delta_1 = \delta$.

 By \cite[Proposition 2.2]{Blanc_Pseudo} the involution $\sigma_{i}$ induces a regular automorphism $\tsi_i$ on the variety~$X_i$.
We denote by~\mbox{$\tau_{i,j}\colon X_i \to X_{j}$} the birational map induced by the identity map on $\p^3$.
Note that this map is a composition of flops described in Proposition \ref{prop_flop}.

\begin{lemma}
 In notations and assumptions of Theorem \textup{\ref{thm_blanc}} the pseudo-automorphism $\varphi$ is a composition~\eqref{eq_decomposition_of_varphi} of
 regular involutions $\tsi_i\colon X_i\to X_i$ and flops $\tau_{i-1,i}\colon X_{i-1}\dashrightarrow X_i$ and the following diagram commutes:
\begin{equation}\label{eq_decomposition_of_varphi}
 \xymatrix{X_1 \ar[r]^{\tsi_1}  \ar@/_1.5pc/@{-->}[rrrrrrr]_{\varphi}& X_1 \ar@{-->}[r]^{\tau_{1,2}}& X_2 \ar[r]^{\tsi_2}
 & \dots \ar[r]^{\tsi_{k-1}}& X_{k-1} \ar@{-->}[r]^{\tau_{k-1,k}}& X_k \ar[r]^{\tsi_k}&X_k \ar@{-->}[r]^{\tau_{k,1}} & X_1 }
\end{equation}
\end{lemma}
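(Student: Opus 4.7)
The plan is to verify three separate assertions packaged into this lemma: (i) each involution lifts to a regular automorphism $\tsi_i$ of $X_i$; (ii) each map $\tau_{i-1,i}\colon X_{i-1}\dashrightarrow X_i$ factors as a composition of flops of the type described in Proposition \ref{prop_flop}; and (iii) reading the bottom row of diagram \eqref{eq_decomposition_of_varphi} recovers $\varphi$ as a rational self-map of $X_1$.

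Point (i) is already recorded in the paragraph preceding the statement and quoted from \cite[Proposition 2.2]{Blanc_Pseudo}, so no further argument is needed there. For point (iii), since each $\tau_{i,j}$ is induced by the identity on $\p^3$ and each $\tsi_i$ restricts to $\sigma_i$ on the open set where $\delta_i$ is an isomorphism, the composition in \eqref{eq_decomposition_of_varphi} agrees on a dense open subset of $X_1$ with the lift of $\sigma_1\circ\cdots\circ\sigma_k=\hat{\sigma}_1\circ\cdots\circ\hat{\sigma}_k$ to $X_1$, which by Theorem \ref{thm_blanc} is precisely $\varphi$. Since birational maps of smooth varieties are determined by their restriction to a dense open subset, this equality extends everywhere.

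The substance of the proof lies in point (ii). Both $X_{i-1}$ and $X_i$ are obtained from $\p^3$ by first blowing up $p_1,\ldots,p_k$ and then blowing up the proper transforms of the full collection $\Gamma_1,\ldots,\Gamma_k$, only in two slightly different orders: $X_{i-1}$ begins the list of curve blow-ups with $\Gamma_{i-1}$ while $X_i$ begins it with $\Gamma_i$, the remaining curves appearing in the same order in both lists. Any two such orders are related by a finite chain of transpositions of consecutive entries, and each transposition yields a birational map between the corresponding intermediate threefolds. When the two swapped centers are disjoint the transposition is an isomorphism; when they meet, Condition \eqref{eq_generality_of_Blanc_points} (pairwise transverse intersection on $Q$, no triple intersection, and $p_a\notin\Gamma_b$ for $a\ne b$) guarantees that their proper transforms after the preceding blow-ups still form a nodal curve with only transverse intersection points, so Proposition \ref{prop_flop} identifies the transposition with a flop. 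Concatenating these flops expresses $\tau_{i-1,i}$ as required.

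The main obstacle is the bookkeeping inside (ii): one must check that the proper transforms of the $\Gamma_a$ appearing at each intermediate stage are still smooth and that their union remains nodal. I would handle this by an inductive argument using Condition \eqref{eq_generality_of_Blanc_points} together with the standard fact that blowing up a smooth center disjoint from two smooth transverse curves preserves both smoothness and transversality, and that blowing up along one of them turns its intersection with the other into a finite set of points supported on disjoint fibers of the exceptional divisor. Once this is settled, the three points combine to yield the lemma.
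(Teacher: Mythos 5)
Your proposal is correct and follows essentially the same route as the paper, which in fact states this lemma without a proof: the regularity of $\tsi_i$ is quoted from \cite[Proposition 2.2]{Blanc_Pseudo} and the identification of $\tau_{i,j}$ with a composition of flops is simply asserted in the paragraph preceding the lemma, with Proposition \ref{prop_flop} and Condition \eqref{eq_generality_of_Blanc_points} playing exactly the roles you assign them. Your write-up supplies the reordering-by-adjacent-transpositions bookkeeping that the paper leaves implicit, and it is sound.
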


\subsection{Action of $\varphi^*$}
In this section we study the action of the inverse image of $\varphi$ on groups $N^1(X)$ and $N_1(X) = N^2(X)$.
First, let us define the generators of these groups.

We denote by $H$ the pullback of the hyperplane section from $\p^3$ to $X$, by $E_j$ the exceptional divisors over points $p_j$ and by $F_j$ the exceptional 
divisors over $\Gamma_j$. There exist classes $e_j,f_j\in N^2(X)$ which correspond to extremal rays of the contractions of $E_j$ and $F_j$ respectively. 
We denote by $h\in N^2(X)$ the image of the class of a line in~$\p^3$. 

\begin{lemma}\label{lemma_classes_blanc}
 Under the assumptions of Theorem \textup{\ref{thm_blanc}} the groups $N^1(X)$ and $N^2(X)$ are generated by the following classes:
 \begin{align*}
  N^1(X_i) &= \langle H, E_1,\dots, E_k, F_1,\dots, F_k\rangle;\\
  N^2(X_i) &= \langle h, e_1,\dots, e_k, f_1,\dots, f_k\rangle.
 \end{align*}
 Moreover, these two bases are ``almost dual'' in the sense that  
 \begin{align*}
  H\cdot h = 1; && & E_i\cdot h = 0;   && F_i\cdot h = 0 \text{ for all } i;\\
  H\cdot e_i=0; && &E_i\cdot e_i =-1; && F_i\cdot e_i = 0 \text{ for all } i;  \\
  && &E_i\cdot e_j = 0; &&F_i\cdot e_j = 0 \text{ for all } i\ne j;\\
  H\cdot f_i = 0;&& &E_i\cdot f_i = 0; &&F_i\cdot f_i = -1\text{ for all } i;\\
  && &E_i\cdot f_j = 0;&&F_i\cdot f_j = 0  \text{ for all } i\ne j.
 \end{align*} 
\end{lemma}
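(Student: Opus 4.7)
The plan is to prove both the generation statement and the intersection table by iterating the standard blow-up formulas for $N^1$ and $N^2 = N_1$ (these being dual on a smooth projective threefold) along the sequence of blow-ups defining $\delta_i\colon X_i\to\p^3$, invoking Condition~\eqref{eq_generality_of_Blanc_points} to ensure the centers remain smooth and sufficiently generic at each stage.

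For $N^1(X_i)$, at each blow-up $\pi\colon \widetilde{Y}\to Y$ of a smooth variety along a smooth center with exceptional divisor $\mathcal{E}$, one has $N^1(\widetilde{Y}) = \pi^* N^1(Y)\oplus\RR[\mathcal{E}]$. Starting from $N^1(\p^3) = \RR\cdot H_{\p^3}$ and iterating, then identifying the pullback of $\OP_{\p^3}(1)$ with $H$ and the proper transforms of the successive exceptional divisors with $E_j$ and $F_j$, one obtains the claimed basis. For $N^2(X_i)$ the analogous formula adds at each step the class of a line in the exceptional $\p^2$ of a point blow-up, or the class of a fiber of the exceptional ruled surface of a curve blow-up; these are precisely $e_j$ and $f_j$. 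The class $h$ can be defined as the pullback of the class of a generic line $\ell\subset\p^3$ via the operation dual to $\delta_i^*\colon N^1(\p^3)\to N^1(X_i)$, characterized by $\delta_{i*}h = [\ell]$.

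The intersection numbers involving $h$ then follow at once from the projection formula, using $\delta_{i*}E_j = \delta_{i*}F_j = 0$ and $\OP_{\p^3}(1)\cdot[\ell] = 1$. The diagonal entries $E_i\cdot e_i = -1$ and $F_i\cdot f_i = -1$ are the standard self-intersection computations for exceptional divisors: the normal bundle of $E_i\cong\p^2$ at the blown-up point is $\OP_{\p^2}(-1)$, and the restriction of the normal bundle of $F_i$ to a fiber is $\OP_{\p^1}(-1)$. The off-diagonal vanishings ($E_i\cdot e_j$, $F_i\cdot f_j$ for $i\ne j$, and all $E_i\cdot f_j$, $F_i\cdot e_j$) reduce to representing the relevant fiber classes by cycles disjoint from the relevant exceptional divisors. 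This follows from Condition~\eqref{eq_generality_of_Blanc_points}: the hypothesis $p_i\notin\Gamma_j$ for $i\ne j$ ensures $E_i\cap F_j = \emptyset$ on $X_i$, and a generic fiber of $F_i\to\Gamma_i$ can be chosen over a point of $\Gamma_i$ distinct from $p_i$, hence disjoint from $E_i$.

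The main technical nuisance is bookkeeping of proper transforms in the case $p_i\in\Gamma_i$, which does occur in Blanc's construction: blowing up the proper transform of $\Gamma_i$ after $p_i$ further modifies the exceptional divisor $E_i$ at the points where $\widetilde{\Gamma}_i$ meets $E_i$. One must verify that $e_i$ can still be represented by a line in the proper transform of $E_i\cong\p^2$ avoiding these modifications, so that the equality $E_i\cdot e_i = -1$ persists on $X_i$; choosing the representative generically makes this routine. Once this is controlled, all entries of the table propagate across the remaining blow-ups, and in fact across the flops $\tau_{i-1,i}$ linking the different models $X_i$, since those flops are pseudo-isomorphisms and preserve $N^1$ and $N^2$ by Lemma~\ref{lemma_pseudo_aut}.
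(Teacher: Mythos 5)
The paper states this lemma without any proof, treating it as the standard computation for a tower of blow-ups of $\p^3$ along smooth centers (it is essentially contained in \cite{Blanc_Pseudo}), so there is no argument of the paper's to compare yours against; your write-up is a correct and complete version of exactly the expected computation. In particular you correctly identify the one point that genuinely requires care, namely that $p_i\in\Gamma_i$ (which does hold here: $f(p_i)=\partial f/\partial x_i(p_i)=0$), so that blowing up the proper transform of $\Gamma_i$ modifies $E_i$; choosing the representative of $e_i$ to be a general line of $E_i$ avoiding the finitely many points of $\widetilde{\Gamma}_i\cap E_i$ settles both $E_i\cdot e_i=-1$ and $F_i\cdot e_i=0$, and the remaining vanishings follow from Condition~\eqref{eq_generality_of_Blanc_points} as you say. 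One small remark: the closing appeal to the flops $\tau_{i-1,i}$ and Lemma~\ref{lemma_pseudo_aut} is unnecessary and slightly off target (that lemma concerns the action on $N^1$ only, not the pairing with $N_1$, and transporting named classes across a flop requires a little care); it is cleaner to observe that each $X_i$ is itself a tower of smooth blow-ups of $\p^3$, differing from $X_1$ only in the order of the curve blow-ups, so your direct argument applies verbatim to every $X_i$.
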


We also use another set of elements in the group $N^1(X)$ to describe the action of the inverse image $\varphi^*$. Define a class~\mbox{$\nu_j\in N^1(X)$} as follows:
\begin{equation*}
 \nu_j = 2H-2E_j - F_j.
\end{equation*}

\begin{lemma}[{\cite[Proposition 2.3]{Blanc_Pseudo}}]\label{lemma_action_of_sigma}
 Under the assumptions of Theorem \textup{\ref{thm_blanc}} the involution $\hat{\sigma}_{i}$ acts on~$N^1(X)$ as follows:
 \begin{align*}
  &\hat{\sigma}_{i}^*(D) = D+\nu_i, \text{ if }D = H \text{ or }E_i;\\
  &\hat{\sigma}_{i}^*(F_i) = F_i+2\nu_i;\\
  &\hat{\sigma}_{i}^*(D) = D, \text{ if }D = E_j \text{ or }F_j \text{ for } j\ne i;\\
  &\hat{\sigma}_{i}^*(\nu_i) = -\nu_i;\\
  &\hat{\sigma}_{i}^*(\nu_j) = \nu_j+2\nu_i.
 \end{align*}
\end{lemma}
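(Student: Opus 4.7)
The plan is to compute the action of $\sigma_i$ first as a Cremona transformation of $\p^3$, then lift to $X$ using the fact that $\hat\sigma_i$ is an involution together with the explicit description of its indeterminacy locus. Throughout, I would use the geometric fact that $\sigma_i$ fixes $Q$ pointwise (outside its indeterminacy locus), which is immediate from its definition via involutions on lines through $p_i$ that fix the two other intersection points with $Q$.

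First I would establish the formula $\hat\sigma_i^*(H) = 3H - 2E_i - F_i$. Consider a general line $L$ through a point $q \in \p^3$ and passing through $p_i$; the pullback of a hyperplane under $\sigma_i$ restricted to $L$ has degree $2$ away from $p_i$, and $p_i$ must be a base point of the linear system $|\sigma_i^{*}H|$ of multiplicity $2$ while $\Gamma_i$ is a base locus of multiplicity $1$ (since $\sigma_i$ fixes $Q \supset \Gamma_i$ pointwise). A dimension and intersection count shows $|\sigma_i^{*}H|$ is a linear system of cubics with exactly those base conditions, giving the formula after passing to $X$. Since $\hat\sigma_i^* \nu_i = -\nu_i$ will follow from the involution property, we immediately obtain $\hat\sigma_i^*(H) = H + \nu_i$.

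Next I would compute the action on the other exceptional classes. For $j \neq i$, Condition \eqref{eq_generality_of_Blanc_points} guarantees that $p_j$ and $\Gamma_j$ are disjoint from the indeterminacy locus of $\sigma_i$, so $\sigma_i$ acts on a neighborhood of $p_j \cup \Gamma_j$ and, since it fixes $Q$ pointwise, it fixes $p_j$ and $\Gamma_j$. Then $\hat\sigma_i^*(E_j) = E_j$ and $\hat\sigma_i^*(F_j) = F_j$ by functoriality of the inverse image under a map that is regular near the blown-up loci. The formula $\hat\sigma_i^*(E_i) = E_i + \nu_i$ is obtained by writing $\hat\sigma_i^*(E_i) = aH + bE_i + cF_i + \sum_{j \neq i}(b_j E_j + c_j F_j)$ with undetermined coefficients, using the involution relation $(\hat\sigma_i^*)^2 = \mathrm{Id}$ (which holds here by Lemma \ref{lemma_composition_of_pb} since the proper images under $\hat\sigma_i$ of divisors are not contained in $\Ind(\hat\sigma_i)$), and intersecting with the dual basis of Lemma \ref{lemma_classes_blanc} to pin down the coefficients. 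The same procedure applied to $F_i$, together with the fact that $\Gamma_i$ is a curve of multiplicity $2$ in the exceptional behavior, yields $\hat\sigma_i^*(F_i) = F_i + 2\nu_i$.

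The final formulas on the $\nu_j$'s are purely formal consequences: $\hat\sigma_i^*(\nu_i) = 2(H + \nu_i) - 2(E_i + \nu_i) - (F_i + 2\nu_i) = \nu_i - 2\nu_i = -\nu_i$, and similarly $\hat\sigma_i^*(\nu_j) = 2(H + \nu_i) - 2E_j - F_j = \nu_j + 2\nu_i$ for $j \neq i$. The main obstacle I anticipate is justifying the pullback formulas on the birational model $X$ where $\hat\sigma_i$ is not regular; a clean way around this is to perform the computation on $X_i$ (where $\tilde\sigma_i$ is regular by \cite[Proposition 2.2]{Blanc_Pseudo}) and then transport along the pseudo-isomorphism $\tau_{i,1}$, which by Proposition \ref{prop_flop} induces an isomorphism on $N^1$ identifying the natural bases of exceptional classes on the two models.
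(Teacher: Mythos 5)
The paper offers no proof of this lemma: it is imported verbatim from \cite[Proposition 2.3]{Blanc_Pseudo}, so there is nothing internal to compare against. Your sketch is essentially the computation Blanc performs, and your closing remark---do the pullback computation on $X_i$, where $\tilde\sigma_i$ is an honest regular automorphism, and transport along the pseudo-isomorphism $\tau_{i,1}$, which identifies the natural bases of $N^1$ by Proposition \ref{prop_flop}---is exactly the right way to make the formulas legitimate on $X$. The last two identities, $\hat\sigma_i^*(\nu_i)=-\nu_i$ and $\hat\sigma_i^*(\nu_j)=\nu_j+2\nu_i$, are indeed purely formal consequences of the first three, and your verification of them is correct.

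Two of your intermediate steps are flawed as written. First, you assert that Condition \eqref{eq_generality_of_Blanc_points} makes $\Gamma_j$ disjoint from $\Ind(\sigma_i)$ for $j\ne i$; it guarantees the opposite, namely $|\Gamma_i\cap\Gamma_j|=6$, so $\Gamma_j$ meets $\Ind(\sigma_i)=\{p_i\}\cup\Gamma_i$ in six points and $\sigma_i$ is not regular in a neighbourhood of all of $\Gamma_j$. The conclusion $\hat\sigma_i^*(F_j)=F_j$ survives, but it must be justified either on the regular model $X_i$ or by checking that the correction terms of Lemma \ref{lemma_composition_of_pb} sit over finitely many points and contribute nothing in $N^1$; the disjointness you invoke is simply false. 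Second, the undetermined-coefficients step for $E_i$ and $F_i$ does not close: the relation $(\hat\sigma_i^*)^2=\Id$ is satisfied by many involutive matrices, and ``intersecting with the dual basis of Lemma \ref{lemma_classes_blanc}'' merely names the unknown coefficients rather than computing them. What actually pins them down is the geometric identification of the classes of the strict transforms of $E_i$ and $F_i$ (their degrees and their multiplicities at $p_i$ and along $\Gamma_i$), which one reads off from the explicit resolution of $\sigma_i$ in \cite[Section 2]{Blanc_Pseudo}---this input is missing from your argument. (A minor slip: on a general line $L$ through $p_i$ the mobile part of $\sigma_i^*H|_L$ has degree $1$, not $2$, since $\sigma_i$ maps $L$ to itself isomorphically; the multiplicity $2$ at $p_i$ arises as $3-1$, so your parenthetical contradicts the multiplicity you then assign.)
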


Since we know the action of involutions on groups of classes of divisors of $X_i$ we are able to compute the first dynamical class of $\varphi$.

\begin{lemma}[{\cite[Proof of Proposition 2.3]{Blanc_Pseudo}}]\label{lemma_theta_1_varphi}
  Under the assumptions of Theorem \textup{\ref{thm_blanc}} the first dynamical class of $\varphi$ is as follows:
  \begin{equation}
   \theta_1(\varphi) = \sum_{i=1}^k \alpha_i \nu_i \in N^1(X),
  \end{equation}
 where $\alpha_1>\alpha_2>\dots>\alpha_k$ are strictly positive numbers and $ \alpha_1>\alpha_2+2\left(\sum_{i=3}^k\alpha_i\right)$.
\end{lemma}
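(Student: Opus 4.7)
The strategy is to first locate $\theta_1(\varphi)$ inside the subspace $W = \langle \nu_1, \dots, \nu_k\rangle \subset N^1(X)$, and then diagonalize the restriction of $\varphi^*$ to $W$ explicitly. For the first step, Lemma \ref{lemma_action_of_sigma} shows that each $\hat{\sigma}_i^*$ preserves $W$ and acts as the identity on the quotient $N^1(X)/W$: the generators $H$, $E_i$, $F_i$ are all shifted by elements of $W$, while $E_j$ and $F_j$ are fixed for $j\neq i$. Using the decomposition \eqref{eq_decomposition_of_varphi} of $\varphi$ into regular involutions and flops, together with Lemma \ref{lemma_composition_of_pb} (applicable since $\varphi$ is a pseudo-automorphism), the induced map $\varphi^*$ on $N^1(X)$ is therefore the identity modulo $W$. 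Theorem \ref{thm_blanc} gives $\lambda_1(\varphi)^2 > \lambda_2(\varphi)$, so Theorem \ref{thm_Truong_condition} provides $\theta_1(\varphi)$ as an eigenvector with eigenvalue $\lambda := \lambda_1(\varphi) > 1$; its image in $N^1(X)/W$ is a $\lambda$-eigenvector of the identity, hence zero, and so $\theta_1(\varphi) \in W$.

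Writing $\theta_1(\varphi) = \sum_{i=1}^k \alpha_i \nu_i$ and restricting Lemma \ref{lemma_action_of_sigma} to $W$, the action of each $\hat{\sigma}_i^*$ becomes: if $L = \sum_j \alpha_j$ is the sum of coordinates, then the coefficient of $\nu_i$ is replaced by $2L - 3\alpha_i$ while all others remain unchanged. Composing these operations in the order dictated by $\varphi^*$ and writing $L_i$ for the sum of coordinates after step $i$ and $j(i)$ for the index modified at step $i$, a short induction gives the recursion $L_i = 3L_{i-1} - 4\alpha_{j(i)}$. Imposing the eigenvalue equation coordinate-by-coordinate yields $2L_{i-1} = (\lambda+3)\alpha_{j(i)}$; combining this with the recursion eliminates $L_i$ and produces the constant ratio
\begin{equation*}
 \frac{\alpha_{j(i+1)}}{\alpha_{j(i)}} = \frac{3\lambda+1}{\lambda+3} =: r.
\end{equation*}

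Since $\lambda > 1$, one checks $1 < r < 3$, so after indexing so that $\varphi$ begins by applying $\hat{\sigma}_1$ (matching the convention of Theorem \ref{theorem_blanc_example}), we have $\alpha_i = \alpha_1 r^{1-i}$, a strictly decreasing sequence of positive real numbers. For the final inequality $\alpha_1 > \alpha_2 + 2(\alpha_3 + \dots + \alpha_k)$, I multiply through by $r^{k-1}$ to reduce it to
\begin{equation*}
 r^{k-2}(r-1)^2 > 2(r^{k-2} - 1).
\end{equation*}
The eigenvalue $\lambda$ satisfies $(3\lambda+1)^k = \lambda(\lambda+3)^k$ (obtained by iterating the ratio relation together with the normalization $L_0 = \tfrac{\lambda+3}{2}\alpha_1$), equivalently $r^k = (3r-1)/(3-r)$. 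Substituting this into the displayed inequality and cancelling the strictly positive factor $(r-1)^2$ collapses it to $3r - 1 > 2(r-1)$, i.e.\ $r > -1$, which always holds.

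The main technical care lies in setting up the recursion with the correct signs and composition order in step two; the algebraic simplification of the sum inequality in the final step is surprisingly clean, and crucially avoids any case analysis on $k$. A minor subtlety to mention at the outset is that the composition identity $\varphi^* = \hat{\sigma}_k^* \circ \cdots \circ \hat{\sigma}_1^*$ on $N^1(X)$ is not automatic for rational maps, but follows from \eqref{eq_decomposition_of_varphi} since each $\tsi_i$ is regular and each flop $\tau_{i-1,i}$ acts as an isomorphism on the groups $N^1$.
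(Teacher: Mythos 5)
Your argument is correct, and it is worth noting that the paper does not actually prove this lemma: it is imported wholesale from the proof of Proposition~2.3 in \cite{Blanc_Pseudo}, so what you have written is a self-contained reconstruction of Blanc's linear-algebra computation rather than an alternative to an argument in the text. The two key steps both check out. First, Lemma \ref{lemma_action_of_sigma} does show each $\hat{\sigma}_i^*$ is the identity on $N^1(X)/\langle \nu_1,\dots,\nu_k\rangle$, so the $\lambda_1$-eigenvector lies in $W$ (the $\nu_j$ are independent since each contains $F_j$, so the coefficients $\alpha_i$ are well defined). Second, the update rule $\alpha_i \mapsto 2L-3\alpha_i$, the sum recursion $L' = 3L-4\alpha_i$, the ratio $r = (3\lambda+1)/(\lambda+3) \in (1,3)$, the consistency relation $r^k = (3r-1)/(3-r)$, and the reduction of $\alpha_1 > \alpha_2 + 2\sum_{i\geqslant 3}\alpha_i$ to $(3r-1)(r-1)^2 > 2(r-1)^3$, i.e.\ $r>-1$, are all verified. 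The one place to tidy is the composition-order bookkeeping: you simultaneously assert $\varphi^* = \hat{\sigma}_k^*\circ\cdots\circ\hat{\sigma}_1^*$ and that ``$\varphi$ begins by applying $\hat{\sigma}_1$,'' which are incompatible under the usual right-to-left convention, and the normalization $L_0 = \tfrac{\lambda+3}{2}\alpha_1$ presupposes that $\hat{\sigma}_1^*$ acts first. Since the substantive conclusion is only that consecutive coefficients, read in the order the pullbacks are applied, differ by the factor $r>1$, this is a labelling issue rather than a gap --- but you should fix the convention once at the start (the diagram \eqref{eq_decomposition_of_varphi} shows $\hat{\sigma}_1$ acting first on points, hence $\hat{\sigma}_k^*$ acting first on divisors, which is exactly what yields the decreasing order $\alpha_1>\cdots>\alpha_k$ claimed in the lemma). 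You might also add one line justifying the choice of sign (all $\alpha_i$ share a sign, and pairing with the class $h$ of a general line forces the positive normalization for the pseudo-effective class $\theta_1(\varphi)$).
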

This implies in particular that the class $\theta_1(\varphi)$ is not nef. By Condition \eqref{eq_generality_of_Blanc_points} 
curves $\Gamma_1$ and $\Gamma_2$ lie on $Q$ and their intersection is transversal in $Q$. Choose a point $q\in\Gamma_1\cap\Gamma_2$ and denote by $L$ the proper transform in $X$ 
of the line in $\p^3$ passing through $p_1$ and $q$.
\begin{corollary}\label{corollary_theta_not_nef}
 Under the assumptions of Theorem \textup{\ref{thm_blanc}} we have $\theta_1(\varphi)\cdot[L] < 0$.
\end{corollary}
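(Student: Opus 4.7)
The plan is to reduce the corollary to a direct intersection computation, using the explicit formula for $\theta_1(\varphi)$ from Lemma \ref{lemma_theta_1_varphi} together with the bases for $N^1(X)$ and $N_1(X)$ given in Lemma \ref{lemma_classes_blanc}.

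\medskip

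First I would compute the intersection numbers of $[L]$ with the generators $H, E_1, \dots, E_k, F_1, \dots, F_k$ of $N^1(X)$. By the projection formula applied to $\delta\colon X \to \p^3$, one has $[L]\cdot H = \deg(\ell) = 1$. For the $E_i$'s: since $\ell$ passes through $p_1$ smoothly and, for a general choice of $q$, avoids the other points $p_2, \ldots, p_k$, the proper transform $L$ meets $E_1$ transversally in a single point and is disjoint from $E_j$ for $j\geqslant 2$, so $[L]\cdot E_1 = 1$ and $[L]\cdot E_j = 0$ for $j \geqslant 2$. For the $F_i$'s: $\ell \cap Q = \{p_1, q, r\}$ for a third point $r$, and by condition \eqref{eq_generality_of_Blanc_points} together with the generality of $q$, the line $\ell$ meets $\Gamma_1$ and $\Gamma_2$ only at $q$, transversally, and avoids all other $\Gamma_j$ for $j\geqslant 3$. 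Since the intermediate blow-ups of the $p_i$'s do not affect the intersection $\ell \cap \Gamma_i$ at the point $q$ (as $p_i \notin \Gamma_j$ for $i\ne j$), the proper transform $L$ meets $F_1$ and $F_2$ in one point each and is disjoint from $F_j$ for $j\geqslant 3$. Hence $[L]\cdot F_1 = [L]\cdot F_2 = 1$ and $[L]\cdot F_j = 0$ for $j \geqslant 3$.

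\medskip

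Substituting into $\nu_i = 2H - 2E_i - F_i$, I find
\begin{equation*}
[L]\cdot \nu_1 = 2 - 2 - 1 = -1, \qquad [L]\cdot \nu_2 = 2 - 0 - 1 = 1, \qquad [L]\cdot \nu_i = 2 \quad\text{for } i \geqslant 3.
\end{equation*}
Using Lemma \ref{lemma_theta_1_varphi}, it follows that
\begin{equation*}
\theta_1(\varphi)\cdot [L] = -\alpha_1 + \alpha_2 + 2\sum_{i=3}^{k} \alpha_i,
\end{equation*}
which is strictly negative because of the inequality $\alpha_1 > \alpha_2 + 2\sum_{i=3}^k \alpha_i$ stated in Lemma \ref{lemma_theta_1_varphi}.

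\medskip

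The only delicate part is justifying that the intersection numbers with $F_1$ and $F_2$ really are equal to $1$ after the intermediate blow-ups: one must check that performing the blow-ups of $p_1,\dots,p_k$ and of the proper transforms of $\Gamma_1,\dots,\Gamma_k$ in order does not alter the single transversal intersection of $\ell$ with $\Gamma_i$ at $q$. This follows from condition \eqref{eq_generality_of_Blanc_points} ($p_i \notin \Gamma_j$ for $i\ne j$, and triples of the curves $\Gamma_i$ have no common point), which ensures that the blow-up centers containing $q$ are separated at each stage, so the proper transform of $\ell$ continues to meet the next center transversally at one point. Once this is in place, the corollary reduces to the arithmetic inequality already recorded in Lemma \ref{lemma_theta_1_varphi}.
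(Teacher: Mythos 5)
Your proof is correct in its conclusions and reaches the same key identity $[L]=h-e_1-f_1-f_2$ and the same final inequality $\theta_1(\varphi)\cdot[L]=-\alpha_1+\alpha_2+2\sum_{i\geqslant 3}\alpha_i<0$, but by a genuinely different route. The paper does not compute the intersection numbers geometrically: it observes (via Blanc's explicit formulas) that $\tsi_1(L)$ is the fiber of $F_1$ over $q$, hence a flopping curve of $\tau_{12}$ of class $f_1-f_2$ by Proposition \ref{prop_flop}, and then recovers $H\cdot[L]$, $E_1\cdot[L]$, $F_1\cdot[L]$, $F_2\cdot[L]$ from the invariance of intersection numbers under the regular involution $\tsi_1$ together with the known action of $\tsi_1^*$ on $N^1(X)$ (Lemma \ref{lemma_action_of_sigma}). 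Your direct computation is more elementary and avoids Proposition \ref{prop_flop}, at the cost of having to control the blow-up geometry by hand; the paper's route gets the class of $L$ essentially for free once one knows where $\tsi_1$ sends it.

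One factual slip in your argument should be corrected, even though it does not change the outcome. You assert that $\ell\cap Q=\{p_1,q,r\}$ for a third point $r$. In fact there is no third point: $q\in\Gamma_1$ means precisely that the line through $p_1$ and $q$ meets $Q$ with multiplicity $2$ at $q$ (in the coordinates of Lemma \ref{lemma_coordinates} one has $f(1,t,0,0)=a_{1200}t^2$), i.e.\ $\ell$ is \emph{tangent} to $Q$ at $q$ --- this is exactly what makes $L$ special and $\theta_1(\varphi)$-negative. Because of this tangency, ``$\ell$ meets $\Gamma_i$ transversally at $q$'' cannot be justified by saying $\ell$ is a general line; one must check that the tangency to $Q$ does not force higher-order contact with $\Gamma_1\subset Q$. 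A short local computation settles it: the ideal of $\Gamma_1=\{f=\partial f/\partial x_1=0\}$ pulls back to $\ell$ as $(t^2,2a_{1200}t)=(t)$, so the contact order is $1$ and indeed $F_1\cdot[L]=1$; similarly for $\Gamma_2$. With that repair (and your correct observation that the point blow-ups separate $\ell$ from $\Gamma_1$ over $p_1$, and that $q\notin\Gamma_j$ for $j\geqslant 3$ by \eqref{eq_generality_of_Blanc_points}), your intersection numbers and hence the final inequality stand.
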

\begin{proof}
 By formulas in \cite[Section 2]{Blanc_Pseudo} we see that $\tsi_1(L)$ is the fiber of $F_1$ over the point $q$ on $\Gamma_1$.
 Thus, by Proposition \ref{prop_flop} we get that $\sigma_1(L)$ is a curve of indeterminacy of $\tau_{12}$. Then by Proposition \ref{prop_flop} we see
 \begin{equation*}
  \tsi_{1*}[L] = f_1 - f_2.
 \end{equation*}
 Since $\tsi_1$ is an involution we get $\tsi_{1}^*[L] = \tsi_{1*}[L]$.
 Then by Lemma \ref{lemma_action_of_sigma} we have the following equalities:
 \begin{align*}
  &H\cdot [L]  = \tsi_1^*(H)\cdot \tsi_{1}^*[L] = (3H - 2E_1 - F_1)\cdot(f_1 - f_2) = 1;\\
  &E_1\cdot [L]= \tsi_1^*(E_1)\cdot \tsi_{1}^*[L] = (2H - E_1 - F_1)\cdot(f_1 - f_2) = 1;\\
  &F_1\cdot [L]= \tsi_1^*(F_1)\cdot \tsi_{1}^*[L] = (4H - 4E_1 - F_1)\cdot(f_1 - f_2) = 1;\\
  &F_2\cdot [L]= \tsi_1^*(F_2)\cdot \tsi_{1}^*[L] = F_2\cdot(f_1 - f_2) = 1.
  \end{align*}
 By Lemma \ref{lemma_classes_blanc} this implies that $[L] = h - e_1 -f_1 - f_2$. Then by Lemma \ref{lemma_theta_1_varphi} we get
 \begin{multline*}
  \theta_1(\varphi)\cdot [L] = \left( \sum_{i=1}^k \alpha_i \nu_i \right)\cdot (h - e_1 -f_1 - f_2) =\\
  = \sum_{i=1}^k \alpha_i (2H-2E_j - F_j)\cdot (h - e_1 -f_1 - f_2) = 2\sum_{i=1}^k\alpha_i -3\alpha_1 - \alpha_2 < 0.
 \end{multline*}
 Thus, $L$ is an effective curve which intersects the class $\theta_1(\varphi)$ negatively.
\end{proof}

\subsection{Composition of three involutions}

In these section we consider the Blanc's pseudo-automorphism for $k = 3$. We denote $F = \sigma_3\circ\sigma_2\circ\sigma_1$,
it is a birational automorphism of $\p^3$:
\begin{equation}\label{eq_F}
 \xymatrix{ X_1\ar[d]_{\Delta} \ar[r]^{\tsi_1} & X_1 \ar@{-->}[r]^{\tau_{12}} & X_2 \ar[r]^{\tsi_2} & X_2 \ar@{-->}[r]^{\tau_{23}} & 
 X_3 \ar[r]^{\tsi_3} & X_3 \ar@{-->}[r]^{\tau_{31}} & X_1 \ar[d]^{\Delta} \\ \p^3 \ar@{-->}[rrrrrr]^{F} &&&&&& \p^3}
\end{equation}
Choose homogeneous coordinates $x_0, x_1,x_2$ and $x_3$ on $\p^3$. Denote by $f$ the equation of the cubic $Q$:
\begin{equation*}
 f(x) = f(x_0,x_1,x_2,x_3) = \sum_{|I| = 3} a_I x^I.
\end{equation*}
We fix the centers of three involutions, these are points $p_1$, $p_2$ and $p_3$. Fix a point $q$ in the intersection of curves $\Gamma_1$ and $\Gamma_2$.
\begin{lemma}\label{lemma_coordinates}
 If the cubic surface $Q$ and points $p_1$, $p_2$ and $p_3$  are sufficiently general, 
 then Condition \eqref{eq_generality_of_Blanc_points} is satisfied and up to conjugating $\p^3$ by an element in $\PGL(4,\CC)$ we have 
 \begin{align*}
  q = (1:0:0:0); && p_1 = (0:1:0:0); && p_2 = (0:0:1:0); && p_3 = (0:0:0:1).
 \end{align*}
 In this situation we get the following conditions on coefficients of $f$:
 \begin{equation*}
  a_{3000} = a_{0300} = a_{0030} = a_{0003} = a_{2100} = a_{2010} = 0.
 \end{equation*} 
\end{lemma}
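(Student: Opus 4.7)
The plan is to apply a projective change of coordinates to place the four points $q, p_1, p_2, p_3$ at the standard coordinate vertices, and then to translate each geometric constraint into the vanishing of a specific coefficient of $f$. I would first invoke \cite{Blanc_Pseudo} to ensure Condition~\eqref{eq_generality_of_Blanc_points} holds for $Q$ and $p_1, p_2, p_3$ general; in particular $\Gamma_1 \cap \Gamma_2$ consists of $6$ transverse intersection points on $Q$, and $q$ is chosen among them. For generic data, the tuple $(q, p_1, p_2, p_3)$ is in general linear position in $\p^3$: no three of these points are collinear (a line meeting $Q$ in three points would lie on $Q$, and a generic cubic surface contains no line through two generically chosen points), and the four are not coplanar (each is a codimension-one condition avoided by a general choice). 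Since $\PGL(4,\CC)$ acts transitively on ordered $4$-tuples of points in general linear position in $\p^3$, we may choose coordinates so that $q = (1{:}0{:}0{:}0)$, $p_1 = (0{:}1{:}0{:}0)$, $p_2 = (0{:}0{:}1{:}0)$, $p_3 = (0{:}0{:}0{:}1)$.

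With these coordinates in hand, evaluating $f$ at each of the coordinate vertices is immediate: since $q, p_1, p_2, p_3 \in Q$, direct substitution produces $a_{3000} = a_{0300} = a_{0030} = a_{0003} = 0$.

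For the last two identities, the key input is the description of $\Gamma_i$ as the polar curve of $p_i$ on $Q$, namely $\Gamma_i = Q \cap \{r \in \p^3 : \nabla f(r) \cdot p_i = 0\}$; this is a curve of degree $2 \cdot 3 = 6$, matching the description of the indeterminacy curve in \cite[Section~2]{Blanc_Pseudo}. Geometrically, a point $r \in Q$ lies in $\Gamma_i$ precisely when the line $\overline{p_i r}$ is tangent to $Q$ at $r$, equivalently when $p_i \in T_r Q$. Applying this to $q = (1,0,0,0)$ and $p_1 = (0,1,0,0)$, the condition $q \in \Gamma_1$ reduces to $\partial_1 f(1,0,0,0) = a_{2100} = 0$, and similarly $q \in \Gamma_2$ gives $\partial_2 f(1,0,0,0) = a_{2010} = 0$. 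I expect the main subtlety to be this last identification of $\Gamma_i$ with the polar curve (rather than a locus that could a priori also include the plane section $T_{p_i} Q \cap Q$); granting it, the vanishings follow by routine monomial evaluations.
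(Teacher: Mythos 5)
Your proposal follows essentially the same route as the paper: normalize the four points by a $\PGL(4,\CC)$ transformation, read off $a_{3000}=a_{0300}=a_{0030}=a_{0003}=0$ from membership in $Q$, and get $a_{2100}=a_{2010}=0$ from $q\in\Gamma_1\cap\Gamma_2$ via the description $\Gamma_i = Q\cap\{\partial f/\partial x_i=0\}$ — which is not a subtlety you need to worry about, since the paper itself simply quotes it from \cite[Section 2]{Blanc_Pseudo}. The final monomial evaluations are correct.

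One justification in your argument is genuinely false, however. You claim no three of $q,p_1,p_2,p_3$ are collinear because ``a line meeting $Q$ in three points would lie on $Q$.'' By Bezout a line not contained in $Q$ meets $Q$ in exactly three points counted with multiplicity, so three points of $Q$ being collinear is in no way contradictory. The statement you need is only that the four points span $\p^3$ (which already forces no three to be collinear), and the correct reason that $q,p_1,p_2$ are not collinear is the one the paper gives: since $q\in\Gamma_1$, the line $\overline{p_1q}$ is tangent to $Q$ at $q$ and hence meets $Q$ only in $\{p_1,q\}$, so it cannot contain $p_2$; then a general $p_3\in Q$ avoids the plane spanned by $q,p_1,p_2$. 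With that repair your argument closes. Note also that where you cite \cite{Blanc_Pseudo} wholesale for Condition \eqref{eq_generality_of_Blanc_points}, the paper re-derives the transversality of $\Gamma_1$ and $\Gamma_2$ at $q$ by an explicit determinant computation in the normalized coordinates (obtaining the open condition $a_{2001}(4a_{1200}a_{1020}-a_{1110}^2)\neq 0$); this is acceptable for the lemma as stated, but the explicit polynomial condition is what makes the genericity usable later for the restricted family of cubics with the six vanishing coefficients.
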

\begin{proof}
 First, let us prove that curves $\Gamma_1$ and $\Gamma_2$ intersect transversally in all points for a general choice of the cubic surface $Q$ and points $p_1$ and $p_2$.
 
 The first generality condition is that $p_2$ does not lie on $\Gamma_1$.
 Then take any point $q$ in $\Gamma_1\cap\Gamma_2$. 
 Points $p_1$, $p_2$ and $q$ are not colinear since the line passing through $p_1$ and $q$ does not intersect $Q$ in any other points and $p_2\ne q$ by our condition.
 
 Then there exist an automorphism of the projective space $\p^3$ which maps points $q$, $p_1$ and $p_2$ to points $(1:0:0:0)$, $(0:1:0:0)$ and $(0:0:1:0)$.
 Since points $q$, $p_1$, $p_2$ lie on the cubic surface $Q$ this implies that in some coordinates we have $ a_{3000} = a_{0300} = a_{0030} = 0$. 
 
 Consider the affine chart $U_0 = \{x_0\ne 0\}$ of $\p^3$. This chart is isomorphic to $\mathbb{A}^3$ with coordinates $t_1,t_2,t_3$ where $t_i = \frac{x_i}{x_0}$ for all $i = 1,2,3$.
 By \cite{Blanc_Pseudo} the curve $\Gamma_i$ is the intersection of the surfaces $\{f=0\}$ and $\{\frac{\partial{f}}{\partial{x_i}}=0\}$ for $i = 1,2$. 
 Denote by $g(t_1,t_2,t_3)$ the equation induced by $f$ on the chart $U_0$.
 Then the tangent line $L_i$ to $\Gamma_i$ in the point $q = (0,0,0)\in U_0$ is as follows:
 \begin{equation*}
  T_{L_i, q} = \left\{ \sum_{j=1}^3 \frac{\partial{g}}{\partial{t_j}}(0,0,0) dt_j =  \sum_{l=1}^3 \frac{\partial^2{g}}{\partial{t_i}\partial{t_l}}(0,0,0) dt_l = 0\right\} 
  \subset T_{U_0,q} = \langle dt_1, dt_2,dt_3\rangle.
 \end{equation*}
 The intersection of curves $\Gamma_1$ and $\Gamma_2$ in the point $q$ is not transversal if the tangent lines $T_{L_1,q}$ and $T_{L_2,q}$ coincide.
 Computing derivatives of $g$ we get that lines $T_{L_1, q}$ and $T_{L_2,q}$ coincide if the following polynomial vanishes:
 \begin{equation*}
  \det\begin{pmatrix}
              \frac{\partial{g}}{\partial{t_1}}(0,0,0) & \frac{\partial{g}}{\partial{t_2}}(0,0,0) & \frac{\partial{g}}{\partial{t_3}}(0,0,0)\\
              \frac{\partial^2{g}}{\partial{t_1}\partial{t_1}}(0,0,0) & \frac{\partial^2{g}}{\partial{t_1}\partial{t_2}}(0,0,0) & \frac{\partial^2{g}}{\partial{t_1}\partial{t_3}}(0,0,0)\\
              \frac{\partial^2{g}}{\partial{t_2}\partial{t_1}}(0,0,0) & \frac{\partial^2{g}}{\partial{t_2}\partial{t_2}}(0,0,0) &\frac{\partial^2{g}}{\partial{t_2}\partial{t_3}}(0,0,0)
             \end{pmatrix}
   =  a_{2001}(4a_{1200}a_{1020} -a_{1110}^2).
 \end{equation*}
 This implies that if $Q$ is sufficiently general and $p_2$ does not lie on $\Gamma_1$, then curves $\Gamma_1$ and $\Gamma_2$ intersect transversally.
 
 Now take three points $p_1, p_2$ and $p_3$ on a cubic $Q$ such that they are not collinear, $p_i$ does not lie on $\Gamma_j$ for $i\ne j$ and 
 curves $\Gamma_i$ and $\Gamma_j$ intersect transversally for all $1\leqslant i <j \leqslant 3$. Choose any point $q$ in $\Gamma_1\cap\Gamma_2$, then
 points $q,p_1,p_2$ and $p_3$ does not lie on one plane in $\p^3$. Thus, we can choose coordinates of $\p^3$ such that our four points  
 are $(1:0:0:0), (0:1:0:0), (0:0:1:0)$ and $(0:0:0:1)$ respectively.
 
 Since points $q,p_1,p_2$ and $p_3$ lie on $Q$ we get  $ a_{3000} = a_{0300} = a_{0030} = a_{0003} = 0$. Moreover, since the line passing through $p_i$ and $q$ 
 is tangent to $Q$ in $q$ for $i = 1$ and $2$ we get that $a_{2100} = a_{2010} = 0$.
\end{proof}

\begin{lemma}\cite[Section 2]{Blanc_Pseudo}\label{lemma_involutions_in_coordinates}
 Let $Q = \{f = 0\}$ be an equation of a cubic surface in $\p^3$ which contains points  $q = (1:0:0:0)$, $p_1 = (0:1:0:0)$, $p_2 = (0:0:1:0)$ and $p_3 = (0:0:0:1)$; 
 lines passing through $p_i$ and $q$ are tangent to $\Q_f$ in $q$ for $i = 1$ and $2$. Then the involutions associated with points $p_1$, $p_2$ and $p_3$
 are given by the following formulas:
 \begin{align*}
  \sigma_1(x_0,x_1,x_2,x_3) = \left(x_0\frac{\partial{f}}{\partial{x_1}}(x) : x_1\frac{\partial{f}}{\partial{x_1}}(x) - 2f(x) : 
  x_2\frac{\partial{f}}{\partial{x_1}}(x) : x_3\frac{\partial{f}}{\partial{x_1}}(x)\right);\\
  \sigma_2(x_0,x_1,x_2,x_3) = \left(x_0\frac{\partial{f}}{\partial{x_2}}(x) : x_1\frac{\partial{f}}{\partial{x_2}}(x) : 
  x_2\frac{\partial{f}}{\partial{x_2}}(x) - 2f(x) : x_3\frac{\partial{f}}{\partial{x_2}}(x)\right);\\
  \sigma_3(x_0,x_1,x_2,x_3) = \left(x_0\frac{\partial{f}}{\partial{x_3}}(x) : x_1\frac{\partial{f}}{\partial{x_3}}(x) : 
  x_2\frac{\partial{f}}{\partial{x_3}}(x) : x_3\frac{\partial{f}}{\partial{x_3}}(x) - 2f(x)\right).
 \end{align*}
\end{lemma}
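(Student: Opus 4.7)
The plan is to verify each formula directly from the geometric definition of $\sigma_{p_i}$ recalled in Subsection~\ref{subsect_blank_example}: take a general line through $p_i$, cut it with the cubic $Q$, and write down the unique involution of $\p^1$ fixing the two residual intersection points. The three cases $i = 1, 2, 3$ are identical after permuting coordinates, so I will only describe $i = 1$ in detail.

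Fix a general point $x = (x_0 : x_1 : x_2 : x_3) \in \p^3$ and parametrize the line $L_x$ through $p_1 = (0:1:0:0)$ and $x$ affinely as
\[
L_x = \{(x_0 : x_1 + t : x_2 : x_3) \mid t \in \mathbb{A}^1\} \cup \{p_1\},
\]
so that $t = 0$ corresponds to $x$ and $t = \infty$ to $p_1$. Expanding $f$ along $L_x$ by Taylor's formula in $t$ gives
\[
f(x_0, x_1 + t, x_2, x_3) = f(x) + t\,\partial_1 f(x) + \tfrac{t^2}{2}\,\partial_1^2 f(x) + \tfrac{t^3}{6}\,\partial_1^3 f(x).
\]
Since $f$ is cubic, the leading coefficient is $\partial_1^3 f = 6\,a_{0300}$, which vanishes precisely because $p_1 \in Q$. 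Hence $f|_{L_x}$ is a polynomial of degree at most two in $t$, whose roots $t_1, t_2$ parametrize the two points of $L_x \cap Q$ distinct from $p_1$. Vieta's formulas yield
\[
t_1 + t_2 = -\frac{2\,\partial_1 f(x)}{\partial_1^2 f(x)}, \qquad t_1 t_2 = \frac{2\,f(x)}{\partial_1^2 f(x)}.
\]

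The unique non-trivial involution of $L_x \cong \p^1$ fixing the roots $t_1, t_2$ is given in the affine coordinate by $t \mapsto \bigl((t_1 + t_2) t - 2 t_1 t_2\bigr) / \bigl(2t - (t_1 + t_2)\bigr)$, whose value at $t = 0$ equals $2 t_1 t_2 / (t_1 + t_2)$. Substituting the Vieta expressions above, this simplifies to $t^* = -2 f(x) / \partial_1 f(x)$, so
\[
\sigma_1(x) = (x_0 : x_1 + t^* : x_2 : x_3) = \left(x_0 : x_1 - \frac{2 f(x)}{\partial_1 f(x)} : x_2 : x_3\right).
\]
Clearing denominators recovers the first displayed formula; the formulas for $\sigma_2$ and $\sigma_3$ follow verbatim after replacing the index $1$ by $2$ and $3$, the key vanishings $\partial_2^3 f = 6\,a_{0030} = 0$ and $\partial_3^3 f = 6\,a_{0003} = 0$ being guaranteed by $p_2, p_3 \in Q$. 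The tangency conditions on the lines through $q$ play no role in the derivation; they only serve to normalize the coordinate system in Lemma~\ref{lemma_coordinates}. There is no real obstacle here, only a routine Vieta bookkeeping; one should simply note that the identity of rational maps needs to be checked on the dense open subset where $\partial_i f(x) \neq 0$ and $L_x \not\subset Q$, which is clearly non-empty.
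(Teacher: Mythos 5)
Your proof is correct and follows essentially the same route as the paper's: both restrict to a general line through $p_1$ and reduce to elementary algebra on the degree-two polynomial $f|_L$, the cubic term in the parameter vanishing precisely because $p_1\in Q$. The only difference is the direction of the argument --- you derive the formula from Vieta's relations for the two residual intersection points, whereas the paper plugs the stated formula into a parametrized line and checks that it gives a trace-zero matrix in $\PGL(2,\CC)$ whose fixed points are the residual intersection with $Q$; both computations are sound.
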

\begin{proof}
 It suffices to prove that $\sigma_1$ is the necessary involution. Let us consider a line $l$ passing through the point $p_1 = (0:1:0:0)$.
 By Bezout theorem this line intersects the plane $\{ x_1 = 0\}$ in an only one point~\mbox{$(a:0:b:c)$}. Then the line $l$ is the following set of points:
 \begin{equation*}
  l = \{ (as:t:bs:cs)|\ (s:t)\in\p^1\}.
 \end{equation*}
 If we compute equations $f$ and $\frac{\partial{f}}{\partial{x_1}}$ in any point of the line $l$ we get the following:
 \begin{align*}
  f(as,t,bs,cs) &= \mu_1 s t^2  + \mu_2 s^2 t + \mu_3 s^3\\
  \frac{\partial{f}}{\partial{x_1}}(as,t,bs,cs) &= 2 \mu_1 st +\mu_2 s^2.
 \end{align*}
 Here $\mu_1, \mu_2$ and $\mu_3$ are some numbers in $\CC$ depending only on the line $l$ and the cubic $Q$.
 Then if we apply the formula of $\sigma_1$ to the point $(as:t:bs:cs)$, we get the following point on $\p^3$:
 \begin{multline*}
  \sigma_1(as,t,bs,cs) =\\= (as^2(2\mu_1t + \mu_2s): st(2\mu_1t + \mu_2s) - 2(\mu_1 s t^2  + \mu_2 s^2 t + \mu_3 s^3) : bs^2(2\mu_1t + \mu_2s):cs^2(2\mu_1t + \mu_2s)) =\\
  =(a(2\mu_1t + \mu_2s):-\mu_2 t - 2\mu_3 s: b(2\mu_1t + \mu_2s):c(2\mu_1t + \mu_2s)).
 \end{multline*}
 Then the map $\sigma_1$ induces an automorphism of the line $l$; in coordinates $s,t$ it can be written as the following matrix in $\PGL(2,\CC)$:
 \begin{equation*}
  \sigma_1|_{l} = \begin{pmatrix}
                   \mu_2 &-2\mu_3 \\ 2\mu_1 &-\mu_2
                  \end{pmatrix}
 \end{equation*}
 It is easy to check that this is an involution and its eigenvectors correspond to points $(s:t)$ such that $f(as,t,bs,cs) = 0$ and $(s:t)\ne (0:1)$.
\end{proof}

 Set $B = \{(2001), (1200), (1110), (1101), (0210)\}$. Denote by $R$ the following ring:
\begin{equation*}
 R = \ZZ[a_I]_{I\in B} = \ZZ[a_{2001}, a_{1200}, a_{1110}, a_{1101}, a_{0210}].
\end{equation*}
Consider a free commutative ring $R[X, Y]$. Set $w = a_{2001}a_{0210}+a_{1110}a_{1101}$ and denote by $\I$ the ideal $(4, 2w)$ in $R[X, Y]$, i.e., the ideal of polynomials whose coefficients lie in $(4, 2w) \subset R$.
If $P$ is an element in $R[X,Y]$ we consider it as a polynomial of $X$ and $Y$ with coefficients in $R$. 

By $\deg_Y(P)$ we denote the degree of the polynomial $P$ in $Y$. By the leading term in $Y$ of the polynomial $P$
we call the polynomial $r(X)$ such that $\deg(P - r(X)Y^{\deg(P)}$ is strictly less than $\deg(P)$. 
We need the following useful property of degrees of polynomials:
\begin{lemma}\label{lemma_property_of_degree}
 Assume that $P_1$ and $P_2$ are two polynomials:
 \begin{equation*}
  P_i = r_{i}(X)Y^{d_i}  + Q_i(X, Y);\\
 \end{equation*}
 Assume also that the degree in $Y$ of all monomials in $Q_i$ is strictly less that $d_i$. If $d_1>d_2$ and $r_{1}(X)$ is a non-zero polynomial 
 which does not lie in $\I$, 
 then $P_1+P_2$ also does not lie in $\I$ and $\deg_Y(P_1+P_2) = d_1$.
\end{lemma}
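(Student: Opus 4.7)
The plan is to isolate the coefficient of $Y^{d_1}$ in $P_1 + P_2$ and show it equals exactly $r_1(X)$; both conclusions then follow immediately from the hypothesis on $r_1$.

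First I would expand $P_1 + P_2$ as a polynomial in $Y$ with coefficients in $R[X]$, and examine each piece separately. The term $r_1(X)Y^{d_1}$ contributes $r_1(X)$ to the coefficient of $Y^{d_1}$. The polynomial $Q_1$ contributes nothing to this coefficient since, by hypothesis, every monomial appearing in $Q_1$ has $Y$-degree strictly less than $d_1$. Similarly, since $d_2 < d_1$, the term $r_2(X) Y^{d_2}$ does not contribute to $Y^{d_1}$, and every monomial of $Q_2$ has $Y$-degree strictly less than $d_2$, hence strictly less than $d_1$. Therefore the coefficient of $Y^{d_1}$ in $P_1 + P_2$ is exactly $r_1(X)$.

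Since $r_1(X)$ is non-zero by hypothesis, this already forces $\deg_Y(P_1+P_2)=d_1$. For the second conclusion, recall that $\I \subset R[X,Y]$ is, by definition, the set of polynomials all of whose $R$-coefficients (in the monomial basis $\{X^iY^j\}$) belong to $(4, 2w) \subset R$. The assumption $r_1(X) \notin \I$ means that at least one coefficient in $R$ of $r_1(X)$ fails to lie in $(4, 2w)$. But each such coefficient of $r_1(X)$, say the coefficient of $X^i$, appears as the coefficient of $X^i Y^{d_1}$ in $P_1+P_2$. Hence at least one coefficient of $P_1+P_2$ lies outside $(4,2w)$, and so $P_1+P_2 \notin \I$.

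There is no real obstacle here; the argument is a direct bookkeeping check. The only point worth stating carefully is the degree inequality ruling out contributions to the $Y^{d_1}$-coefficient from $Q_1$, $r_2(X)Y^{d_2}$, and $Q_2$, which uses both $d_1 > d_2$ and the assumption that the $Y$-degree of every monomial of $Q_i$ is strictly less than $d_i$.
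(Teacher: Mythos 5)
Your argument is correct and is essentially the paper's own proof: both reduce membership in $\I$ to a coefficientwise condition in $(4,2w)\subset R$ (since $X$ and $Y$ are free variables) and observe that the coefficient of $Y^{d_1}$ in $P_1+P_2$ is exactly $r_1(X)$ because $d_1>d_2$ and every monomial of $Q_1$, $Q_2$ has strictly smaller $Y$-degree. Your write-up is just a more explicit version of the same bookkeeping.
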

\begin{proof}
 Since $X$ and $Y$ are free variables, then the polynomial $\sum_{i,j} r_{i,j}X^iY^j$ lies in $\I$ only if all coefficients $r_{i,j}$ lie in $\I$.
 Since $\deg(P_1)>\deg(P_2)$, then the leading term in $Y$ of $P_1+P_2$ coincides with the leading term of $P_1$. 
\end{proof}

Consider involutions $\sigma_{R1}, \sigma_{R2}$ and $\sigma_{R3}$ of $\p^3_{R[X,Y]}$ 
defined by formulas from Lemma \ref{lemma_involutions_in_coordinates} 
for a cubic surface $Q$ given by an equation $\sum_{I\in B} a_Ix^I = 0$. Denote by $F_R$ the composition of these involutions:
\begin{equation}\label{eq_F_R}
 F_R = \sigma_{R3}\circ\sigma_{R2}\circ\sigma_{R1} \colon \p^3_{R[X,Y]}\dashrightarrow \p^3_{R[X,Y]}
\end{equation}
Our goal is to compute the orbit of a point under the action of $F_R^{-1}$. We will consider points of the following form.
\begin{notation}\label{notation_point}
Let $p = ({M}_0+2N_0:{M}_1+2N_1:\widetilde{g}_2+2g_2:\widetilde{g}_3+ 2g_3)$ be a point in $\p_{R[X,Y]}^3$, 
such that ${M}_0, N_0, M_1, N_1, g_2,\widetilde{g}_2, g_3$ and $\widetilde{g}_3$ are elements of $R[X,Y]$ of the following form:
\begin{enumerate}
 \item[$(\mathrm{B}1)$]  Polynomials $\widetilde{g}_2$ and $\widetilde{g}_3$ are elements of~$\I$;
 also $g_2$ and $g_3$ lie in $2R[X, Y]$ and 
 the leading term in $Y$ of $\widetilde{g}_3$ do not lie in $4R[X, Y]$; 
 \item[$(\mathrm{B}2)$] The leading terms of polynomials ${M}_i(X, Y)$ 
 do not lie in the ideal $2R[X, Y]$ for $i = 0$ and $i = 1$.
 \item[$(\mathrm{B}3)$]  We have the following conditions on degrees of polynomials:
 \begin{align*}
  \deg_Y(\widetilde{g}_2)\leqslant \deg_Y({M_0}) < \deg_Y(\widetilde{g}_3)< \deg_Y({M}_1).
 \end{align*} 
\end{enumerate}
\end{notation}
Now we are going to show that the image of the point in ~\mbox{$\p_{R[X,Y]}^3$} which satisfies conditions $(\mathrm{B}1 - \mathrm{B}3)$ 
under $F_R^{-1}$ still satisfies these conditions.
\begin{lemma}\label{lemma_preimage_of_point}
Assume that $p = ({M}_0+2N_0:{M}_1+2N_1:\widetilde{g}_2+2g_2:\widetilde{g}_3+ 2g_3)$ is a point in the projective space~\mbox{$\p_{R[X,Y]}^3$} 
such that polynomials  ${M}_0, N_0, M_1, N_1, g_2,\widetilde{g}_2, g_3$ and $\widetilde{g}_3$ satisfy conditions $(\mathrm{B}1 - \mathrm{B}3)$.
Then the point $p$ does not lie in $\Ind(F_R^{-1})$.  
Moreover, $F_{R}^{-1}(p) =   ({M}'_0+2N'_0:{M'}_1+2N'_1:\widetilde{g}'_2+2g'_2:\widetilde{g}'_3+ 2g'_3)$ 
where polynomials ${M}'_0, N'_0, M'_1, N'_1, g'_2, \widetilde{g}'_2, {g}'_3$ and $\widetilde{g}'_3$ satisfy conditions $(\mathrm{B}1 - \mathrm{B}3)$.
\end{lemma}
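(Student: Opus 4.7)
The plan is to compute $F_R^{-1}(p)=\sigma_{R1}\circ\sigma_{R2}\circ\sigma_{R3}(p)$ one involution at a time, and to verify after each step that the intermediate point retains the structural form of Notation \ref{notation_point}. By Lemma \ref{lemma_coordinates}, the cubic $f$ reduces to the five-term polynomial $\sum_{I\in B}a_Ix^I$, whose partial derivatives are
\[
\partial_1 f = 2a_{1200}x_0x_1 + a_{1110}x_0x_2 + a_{1101}x_0x_3 + 2a_{0210}x_1x_2,\quad
\partial_2 f = a_{1110}x_0x_1 + a_{0210}x_1^2,\quad
\partial_3 f = a_{2001}x_0^2 + a_{1101}x_0x_1,
\]
so the formulas of Lemma \ref{lemma_involutions_in_coordinates} yield concrete polynomial expressions for the coordinates of $\sigma_{Ri}(q)$ in terms of those of $q$.

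The decisive simplification is that $\widetilde{g}_2,\widetilde{g}_3\in\I$ and $2g_2,2g_3\in 4R[X,Y]\subset\I$, so the last two coordinates of $p$ lie in $\I$ and in particular vanish modulo $2$. Consequently, modulo $\I$, each evaluation $\partial_i f(p)$ and $f(p)$ is a polynomial in $M_0$ and $M_1$ alone with coefficients in $R$. At each step I would decompose the four new coordinates as ``main part $+$ element of $\I$ $+$ element of $2R[X,Y]$'' matching the three summands in Notation \ref{notation_point}, and apply Lemma \ref{lemma_property_of_degree} to identify the dominant $Y$-term of each. This simultaneously shows that the intermediate point escapes the indeterminacy locus of the next involution --- some coordinate has a non-zero image in the appropriate quotient of $R[X,Y]$ --- and preserves conditions $(\mathrm{B}1)$ and $(\mathrm{B}2)$; condition $(\mathrm{B}3)$ is preserved by tracking which coordinate receives the extra $-2f$ contribution at each step, so the ordering of $Y$-degrees is shuffled in a controlled way.

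The choice of $\I=(4,2w)$ with $w=a_{2001}a_{0210}+a_{1110}a_{1101}$ is calibrated so that after three involutions the leading $Y$-term of the new analogue of $\widetilde{g}_3$ is a non-zero multiple of $2w$ rather than of $4$, which is precisely why condition $(\mathrm{B}1)$ is stated asymmetrically. The main obstacle is the bookkeeping: each of the three involution formulas expands every coordinate into a polynomial with many monomial contributions in $R[X,Y]$, and one must identify the unique term whose $Y$-degree dominates and whose $X$-leading coefficient escapes the bad ideal. Lemma \ref{lemma_property_of_degree} is the right tool at each step, but tracing the emergence of the factor $2w$ --- which only materializes after all three involutions are composed --- requires patient coefficient tracking, and is where the transversality condition encoded by $w$ re-appears (compare the determinant computation in the proof of Lemma \ref{lemma_coordinates}).
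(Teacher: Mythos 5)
Your plan has an internal inconsistency that makes it fail as stated. You propose to apply $\sigma_{R3}$, $\sigma_{R2}$, $\sigma_{R1}$ one at a time and to check after each step that the intermediate point ``retains the structural form of Notation \ref{notation_point}'' --- but it does not, and you yourself point out why: the factor $2w$ only materializes after all three involutions are composed. Concretely, after applying $\sigma_{R3}$ alone the new third coordinate is $x_3\,\partial_3 f(p) - 2f(p)$; since $\deg_Y(\widetilde g_3)<\deg_Y(M_1)$, its leading $Y$-term is $-2a_{1200}M_0M_1^2$, which lies in $2R[X,Y]$ but \emph{not} in $\I=(4,2w)$, so condition $(\mathrm{B}1)$ already fails at the first intermediate stage. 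Conditions $(\mathrm{B}1$--$\mathrm{B}3)$ are calibrated to be invariant under the full composition $F_R^{-1}$, not under each factor; a genuine step-by-step argument would require formulating and proving three different intermediate invariants, which you do not supply. The paper instead expands the entire degree-$27$ composition at once (by computer algebra) and reads off the leading terms of $q_0,\dots,q_3$ directly.

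Beyond that structural problem, the actual content of the proof is missing. The lemma hinges on explicit identities: that $q_2$ and $q_3$ are congruent modulo $2\I$ to expressions divisible by $4w$, and that the leading $M_1$-terms of $M_0'$, $M_1'$, $\widetilde g_3'$ are specific monomials (e.g.\ $a_{1200}a_{1101}^{9}a_{0210}^{3}M_0^{10}M_1^{17}$ for $M_1'$) whose coefficients escape $2R[X,Y]$, respectively $4R[X,Y]$. You assert these can be obtained by ``patient coefficient tracking'' but do not produce them, and without them none of $(\mathrm{B}1)$, $(\mathrm{B}2)$, or the non-membership in $\Ind(F_R^{-1})$ is established. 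Note also that reducing ``modulo $\I$'' is too coarse a bookkeeping device: $(\mathrm{B}1)$ requires distinguishing the class of the leading term of $\widetilde g_3'$ modulo $4R[X,Y]$ from its class modulo $\I$ (which contains $4$), so one must work modulo $2\I$, $4$ and $8$ separately, as the paper does.
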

\begin{proof}
 In view of condition $(\mathrm{B}3)$ one has $ \deg_Y(M_0) = d_0$, $ \deg_Y(M_1) = d_1$, $\deg_Y(\widetilde{g}_2) = d_2$, 
 $\deg_Y(\widetilde{g}_3) = d_3$ and we have the following inequality:
 \begin{align}\label{eq_degrees}
  d_2\leqslant d_0 <d_3<d_1.
 \end{align}
 In order to compute~$F_{R}^{-1}(p)$ we use the formulas from Lemma~\ref{lemma_coordinates} 
 associated with the cubic surface given by the equation $\sum_{i\in B} a_I x^I$. 
 Then $F_{R}^{-1}(p) = (q_0:q_1:q_2:q_3)$, where the expressions $q_i$ can be computed in Sage explicitly. 
 
 First thing we note is that all polynomials $q_0$, $q_1$, $q_2$ and $q_3$ lie in $2R[X,Y]$. Our goal is to 
 construct polynomials ${M}'_0, N'_0, M'_1, N'_1, g'_2,\widetilde{g}'_2, g'_3$ and $\widetilde{g}'_3$
 such that 
  \begin{equation*}\label{eq_result_of_computation}
  \begin{split}
   & q_0 = 2(M'_0 + 2N'_0);\\
   & q_1 = 2 (M'_1 + 2N'_1);\\
   & q_2 = 2 (\widetilde{g}'_2 + 2g'_2);\\
   & q_3 = 2(\widetilde{g}'_3 + 2g'_3).
  \end{split}
 \end{equation*}
 We are going to do it considering all components $q_0$, $q_1$, $q_2$ and $q_3$ one by one.
 
 \medskip
 
 Consider the component $q_0$. By formulas in Lemma \ref{lemma_involutions_in_coordinates} we can see that 
 this is a homogeneous polynomial  of ${M}_0, N_0, M_1, N_1, g_2,\widetilde{g}_2, g_3$ and $\widetilde{g}_3$ of degree $27$.
 We define $M'_0$ to be the sum of all monomials in $\frac{1}{2}q_0$  except those divisible 
 by $2, g_2,\widetilde{g}_2, g_3, \widetilde{g}_3$ or $\frac{1}{2}m$ where $m$ is a degree 2 monomial 
 of $g_2,\widetilde{g}_2, g_3$ and $\widetilde{g}_3$.
 Note that by construction  $\frac{1}{2}q_0 - M'_0$ lies in $2R[X, Y]$ since so do $g_2,\widetilde{g}_2, g_3$ 
 and $\widetilde{g}_3$. Denote $N'_0 = \frac{1}{2}(\frac{1}{2}q_0 - M'_0)$.
 The computation in Sage shows:
 \begin{equation}\label{eq_M0_new}
   M'_0 = a_{1101}^{8} a_{0210}^{2} M_{0}^{11} M_{1}^{15}(w a_{1200}  M_{0}
+ \frac{1}{2} a_{1110} a_{1101} a_{0210} \widetilde{g}_{2} + \frac{1}{2} a_{1101}^{2} a_{0210} \widetilde{g}_{3}) + h_{M'_0},
 \end{equation}
 where $h_{M'_0}$ is a homogeneous polynomial of ${M}_0, N_0, M_1, N_1, g_2,\widetilde{g}_2, g_3$ and $\widetilde{g}_3$ of degree $27$
 and the degree in $M_1$ of all monomials is strictly less than $15$. By Lemma \ref{lemma_property_of_degree} and \eqref{eq_degrees} 
 we get that the leading term in $Y$ of $M'_0$ does not lie in $2R[X, Y]$ and
 \begin{equation}\label{eq_deg_M0}
  \deg_Y(M'_0) = 11 d_0 + 15 d_1 + d_3.
 \end{equation}
 
 Consider the component $q_1$. This is a homogeneous polynomial 
 of ${M}_0, N_0, M_1, N_1, g_2,\widetilde{g}_2, g_3$ and $\widetilde{g}_3$ of degree $27$.
 We define $M'_1$ to be the sum of all monomials in $\frac{1}{2}q_1$  except those divisible 
 by $2, g_2,\widetilde{g}_2, g_3, \widetilde{g}_3$ or $\frac{1}{2}m$ where $m$ is a degree 2 monomial 
 of $g_2,\widetilde{g}_2, g_3$ and $\widetilde{g}_3$.
 Note that by construction  $\frac{1}{2}q_1 - M'_1$ lies in $2R[X, Y]$. Denote $N'_1 = \frac{1}{2}(\frac{1}{2}q_1 - M'_1)$.
 The computation in Sage shows:
 \begin{equation}\label{eq_M1_new}
  M'_1 = a_{1200} a_{1101}^{9} a_{0210}^{3} M_{0}^{10} M_{1}^{17} + h_{M'_1},
 \end{equation}
 where $h_{M'_1}$ is a homogeneous polynomial of ${M}_0, N_0, M_1, N_1, g_2,\widetilde{g}_2, g_3$ and $\widetilde{g}_3$ of degree $27$
 and the degree in $M_1$ of all monomials is strictly less than 17. By Lemma \ref{lemma_property_of_degree} and \eqref{eq_degrees} 
 we get that the leading term in $Y$ of  $M'_1$ does not lie in $2R[X, Y]$ and
 \begin{equation}\label{eq_deg_M1}
  \deg_Y(M'_1) = 10 d_0 + 17 d_1.
 \end{equation}

 Consider the component $q_2$. This is a homogeneous polynomial 
 of ${M}_0, N_0, M_1, N_1, p_2 = \widetilde{g}_2+2g_2$ and $p_3 = \widetilde{g}_3 + 2g_3$ of degree $27$. 
 We define $q'_2$ to be the sum of all monomials of $q_2$ except those divisible 
 by $8, 2p_2, 2p_3$ or a degree 2 monomial of $p_2$ and $p_3$.
 Note that by construction $q_2 - q'_2$ lies in $2\I$.
 The computation in Sage shows:
 \begin{equation}\label{eq_q2_in_I}
  q'_2 = 4w a_{1200}^2 M_0^{12} {M}_1^6   (a_{1110} M_0 + a_{0210}{M}_1) (a_{2001}^8  M_0^8 + a_{1101}^8 {M}_1^8)
 \end{equation}
 Since $q'_2$ is divisible by $4w$ it lies in $2\I$. Therefore, by construction $q_2$ lies in $2\I$.
 Define $\widetilde{g}'_2$ to be the sum of all monomials of $\frac{1}{2}q_2$ except those divisible 
 by $4, 2g_2,2\widetilde{g}_2, 2g_3, 2\widetilde{g}_3$ or a degree  2 monomial of $g_2,\widetilde{g}_2, g_3$ and $\widetilde{g}_3$.
 Note that by construction  $g_2 = \frac{1}{2}(\frac{1}{2}q_2 - \widetilde{g}'_2)$ lies in $2R[X, Y]$.
 The computation in Sage shows:
 \begin{multline}\label{eq_g2_new}
  \widetilde{g}'_2 = a_{0210} a_{1101}^8 M_0^{10}  M_1^{15} (2w a_{1200}^2  M_0^2  +
  a_{2001}a_{1200} a_{0210}^2 M_0 \widetilde{g}_2 +\\+ \frac{1}{2}a_{1110} a_{1101}a_{0210}^2 \widetilde{g}_2^2 +
  a_{1200}a_{1101}^2 a_{0210} M_0 \widetilde{g}_3 + \frac{1}{2}a_{1101}^2 a_{0210}^2 \widetilde{g}_2 \widetilde{g}_3))+ h_{\widetilde{g}'_2},
 \end{multline}
 where $h_{\widetilde{g}'_2}$ is a homogeneous polynomial of ${M}_0, N_0, M_1, N_1, g_2,\widetilde{g}_2, g_3$ and $\widetilde{g}_3$ of degree $27$
 and the degree in $M_1$ of all monomials is strictly less than $15$. By  \eqref{eq_degrees} and
 we get that
 \begin{equation}\label{eq_deg_g2}
  \deg_Y(\widetilde{g}'_2) \leqslant 11 d_0 + 15 d_1+ d_3.
 \end{equation}

 Consider the component $q_3$.This is a homogeneous polynomial 
 of ${M}_0, N_0, M_1, N_1, p_2 = \widetilde{g}_2+2g_2$ and $p_3 = \widetilde{g}_3 + 2g_3$ of degree $27$. 
 We define $q'_3$ to be the sum of all monomials of $q_3$ except those divisible 
 by $8, 2p_2, 2p_3$ or a degree 2 monomial of $p_2$ and $p_3$.
 Note that by construction $q_3 - q'_3$ lies in $2\I$.
 The computation in Sage shows:
 \begin{equation}\label{eq_q3_in_I}
 \begin{split}
  q'_3 = 4w a_{1200}^2   M_0^{11} {M}_1^7 (a_{2001} M_0 + a_{1101}{M}_1) &
   (a_{1110}^2 M_0^2 +\\ + a_{0210}^2{M}_1^2) 
   &(a_{2001}^2  M_0^2 + a_{1101}^2 {M}_1^2) (a_{2001}^4 M_0^4 + a_{1101}^4{M}_1^4).
  \end{split}
 \end{equation}
 Since $q'_3$ is divisible by $4w$ it lies in $2\I$. Therefore, by construction $q_3$ lies in $2\I$.
 
 Define $\widetilde{g}'_3$ to be the sum of all monomials of $\frac{1}{2}q_3$ except those divisible 
 by $4, 2g_2,2\widetilde{g}_2, 2g_3, 2\widetilde{g}_3$ or degree 2 monomial of $g_2,\widetilde{g}_2, g_3$ and $\widetilde{g}_3$.
 Note that by construction $g_3 = \frac{1}{2}(\frac{1}{2}q_3 - \widetilde{g}'_3)$ lies in $2R[X, Y]$. Moreover,
 the Sage computation shows:
 \begin{equation}\label{eq_g3_new}
 \begin{split}
  \widetilde{g}'_3 = a_{1200} a_{0210}^2  a_{1101}^7 M_0^{10} M_1^{16} 
  (2w a_{1200} M_{0} + a_{1110} a_{1101} a_{0210} \widetilde{g}_{2} + a_{1101}^{2} a_{0210} \widetilde{g}_{3}) + h_{\widetilde{g}'_3}.
 \end{split} 
 \end{equation}
 Here $h_{\widetilde{g}'_3}$ is a homogeneous polynomial of ${M}_0, N_0, M_1, N_1, g_2,\widetilde{g}_2, g_3$ and $\widetilde{g}_3$ of degree $27$
 and the degree in $M_1$ of all monomials is strictly less than $16$. By Lemma \ref{lemma_property_of_degree} and \eqref{eq_degrees} 
 we get that the leading term in $Y$ of $ \widetilde{g}'_3$ does not lie in $4R[X, Y]$ and
 \begin{equation}\label{eq_deg_g3}
  \deg_Y(\widetilde{g}'_3) = 10 d_0 + 16 d_1+ d_3.
 \end{equation}

 We have constructed polynomials ${M}'_0, N'_0, M'_1, N'_1, g'_2,\widetilde{g}'_2, g'_3$ and $\widetilde{g}'_3$,
 it remains to show conditions $(\mathrm{B}1 - \mathrm{B}3)$ for these polynomials.

 Polynomials $\widetilde{g}'_2+ 2g'_2$ and $\widetilde{g}'_3+ 2g'_3$ lie in $\I$ by equations \eqref{eq_q2_in_I} and \eqref{eq_q3_in_I}.
 Polynomials $g'_2$ and $g'_3$ lie in $2R[X, Y]$ by the construction.
 Also by \eqref{eq_g2_new} and \eqref{eq_g3_new} we get that the leading terms of $\widetilde{g}_2$ and $\widetilde{g}_3$ 
 are not divisible by $4$. Thus, the condition $(\mathrm{B}1)$ is satisfied.

 By \eqref{eq_M0_new} and \eqref{eq_M1_new} we get that the leading monomials of $M'_0$ and $M'_1$
 do not lie in $2R[X, Y]$. Thus, the condition $(\mathrm{B}2)$ is satisfied.

 Finally the condition $(\mathrm{B}3)$ follows from the degree computation \eqref{eq_deg_M0}, \eqref{eq_deg_M1},  \eqref{eq_deg_g2}, \eqref{eq_deg_g3}
 and the assumption \eqref{eq_degrees}.

 Since by our computation polynomials $q_0, q_1, q_2$ and $q_3$ are non-zero, we get that the point $p$ does not lie in $\Ind(F_R^{-1})$.
\end{proof}
\begin{corollary}\label{corollary_pointXY00}
 Let $p = (X:Y:0:0)$ be a point in $\p^3_{R[X,Y]}$ and $F_R$ be a birational automorphism as in \textup{\eqref{eq_F_R}}. 
 Then $p$ does not lie in $\Ind(F_R^{-n})$ for all $n>0$.
\end{corollary}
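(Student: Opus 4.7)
The plan is to proceed by induction on $n\geqslant 1$, proving the stronger statement that $F_R^{-n}$ is defined at $p$ and $F_R^{-n}(p)$ admits a decomposition $({M}_0+2N_0:{M}_1+2N_1:\widetilde{g}_2+2g_2:\widetilde{g}_3+2g_3)$ satisfying conditions $(\mathrm{B}1-\mathrm{B}3)$. The inductive step is supplied directly by Lemma~\ref{lemma_preimage_of_point}: from a decomposition of $F_R^{-n}(p)$ satisfying $(\mathrm{B}1-\mathrm{B}3)$ the lemma produces one for $F_R^{-(n+1)}(p)=F_R^{-1}(F_R^{-n}(p))$ and asserts that $F_R^{-n}(p)\notin\Ind(F_R^{-1})$. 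All the substantive work thus lies in the base case $n=1$, since $p=(X:Y:0:0)$ itself has $\widetilde{g}_2=\widetilde{g}_3=0$ and visibly fails the inequality $\deg_Y(M_0)<\deg_Y(\widetilde{g}_3)$ in $(\mathrm{B}3)$.

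For this base case I would compute $F_R^{-1}(p)=\sigma_{R1}\circ\sigma_{R2}\circ\sigma_{R3}(p)$ directly via Lemma~\ref{lemma_involutions_in_coordinates}. The coefficient vanishings provided by Lemma~\ref{lemma_coordinates} together with the simple form of $p$ make the intermediate expressions collapse into the two linear forms $A=a_{2001}X+a_{1101}Y$ and $B=a_{1110}X+a_{0210}Y$; setting $C=2a_{2001}a_{1110}X^2+(w+2a_{2001}a_{0210})XY+2a_{1101}a_{0210}Y^2$, one obtains, after cancelling the common factor $a_{1200}XYAB$,
\begin{equation*}
F_R^{-1}(p)=(XABC\,:\,YAB(C-AB)\,:\,2a_{1200}XYAC\,:\,-2a_{1200}Y^2BC).
\end{equation*}
None of these coordinates is the zero polynomial, so already $p\notin\Ind(F_R^{-1})$.

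It remains to exhibit a decomposition of $F_R^{-1}(p)$ satisfying $(\mathrm{B}1-\mathrm{B}3)$. The crucial observation is $C\equiv wXY\pmod{2}$, which makes the leading $Y$-monomials of the $0$-th and $2$-nd coordinates divisible by $2$, while the $1$-st coordinate retains a leading monomial $a_{1101}^2a_{0210}^2Y^5$ not in $2R$. The leading $Y^5$-monomial of the $3$-rd coordinate, on the other hand, has coefficient $-4a_{1200}a_{1101}a_{0210}^2\in 4R$; absorbing it into $2g_3$ by setting $g_3=-2a_{1200}a_{1101}a_{0210}^2Y^5\in 2R[X,Y]$ yields a new $\widetilde{g}_3$ whose leading term $-6a_{1200}a_{0210}w\,XY^4$ lies in $(2w)\setminus 4R[X,Y]$, as required by $(\mathrm{B}1)$. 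The remaining verifications --- that $\widetilde{g}_2,\widetilde{g}_3\in\I$, that the $Y$-leading coefficients of $M_0,M_1$ are outside $2R$ (by algebraic independence of the $a_I$), and that the four $Y$-degrees satisfy $\deg_Y\widetilde{g}_2\leqslant\deg_Y M_0<\deg_Y\widetilde{g}_3<\deg_Y M_1$ --- are then direct.

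The main obstacle is this base-case bookkeeping: tracking monomial by monomial whether each coefficient lies in $2R$, in $4R$, or in $\I=(4,2w)$, and simultaneously arranging the decomposition so that all four $Y$-degree inequalities of $(\mathrm{B}3)$ hold. Once this base step is settled, the induction unwinds via Lemma~\ref{lemma_preimage_of_point} to yield $p\notin\Ind(F_R^{-n})$ for every $n>0$.
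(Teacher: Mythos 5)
Your proposal is correct and has the same skeleton as the paper's proof: both treat the corollary as an induction whose inductive step is exactly Lemma~\ref{lemma_preimage_of_point}, both observe that $p=(X:Y:0:0)$ itself cannot be fed into that lemma (its $\widetilde{g}_3$ is zero), and both therefore do one explicit application of $F_R^{-1}$ by hand before the induction takes over. The difference is in how that base case is executed. The paper keeps the unreduced degree-$27$ representative of $F_R^{-1}(p)$ and reads off leading terms by substituting $M_0=X$, $M_1=Y$, $\widetilde{g}_2=\widetilde{g}_3=0$ into the Sage-derived formulas \eqref{eq_M0_new}--\eqref{eq_g3_new}, obtaining $Y$-degrees $15,17,15,16$. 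You instead cancel the common factors accumulated at each of the three involutions and land on the compact representative $(XABC:YAB(C-AB):2a_{1200}XYAC:-2a_{1200}Y^2BC)$ with $Y$-degrees $3,5,\leqslant 3,4$; I checked this factorization step by step and it is correct, as are the key coefficient claims ($C\equiv wXY \pmod 2$, the $Y^5$-coefficient $a_{1101}^2a_{0210}^2$ of the first coordinate, and the coefficient $-6a_{1200}a_{0210}w$ of $XY^4$ in the last coordinate, which indeed lies in $(2w)\setminus 4R[X,Y]$). Since a point of $\p^3_{R[X,Y]}$ only needs \emph{some} representative satisfying $(\mathrm{B}1$--$\mathrm{B}3)$, working with the reduced one is legitimate, and it buys a base case that is verifiable entirely by hand rather than by machine. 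Two cosmetic slips: the common factor removed at the last step is $2a_{1200}XYAB$ rather than $a_{1200}XYAB$ (harmless projectively), and for the second coordinate the leading $Y$-monomial is in fact divisible by $4$, not merely by $2$ --- which is exactly what you need so that it can be pushed into $2g_2'$ with $g_2'\in 2R[X,Y]$ while keeping $\widetilde{g}_2'\in\I$ and $\deg_Y(\widetilde{g}_2')\leqslant\deg_Y(M_0')$.
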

\begin{proof}
 Consider $F_R^{-1}(p)= q = (q_0:q_1:q_2:q_3)$. Below we construct polynomials $M'_0$, $N'_0$, $M'_1$, $N'_1$, $\widetilde{g}'_2$, $g'_2$,
 $\widetilde{g}'_3$ and $g'_3$  such that
   \begin{equation}\label{eq_preimage_of_XY00}
  \begin{split}
   & q_0 = 2(M'_0 + 2N'_0);\\
   & q_1 = 2 (M'_1 + 2N'_1);\\
   & q_2 = 2 (\widetilde{g}'_2 + 2g'_2);\\
   & q_3 = 2(\widetilde{g}'_3 + 2g'_3).
  \end{split}
 \end{equation}
 Consider the component $q_0$, it is a homogeneous polynomial in $X$ and $Y$ of degree $27$.
 Define $M'_0$ to be the sum of all monomials of $\frac{1}{2}q_0$ which coefficients are not divisible by $4$.
 Set $N'_0 = \frac{1}{2}(\frac{1}{2}q_0 - M'_0)$.
 Using the formula~\eqref{eq_M0_new} and substituting there $M_0 = X$, $M_1 = Y$, $\widetilde{g}_2 = \widetilde{g}_3 = 0$ we 
 can see that the leading term in $Y$ of $M'_0$ 
 is equal to $w a_{1200}a_{1101}^{8} a_{0210}^{2} X^{12} Y_{1}^{15}$. It does not lie in $2R[X, Y]$ and the degree in $Y$ of $M'_0$ is
 \begin{equation*}
  \deg_Y(M'_0) = 15.
 \end{equation*}
 Consider the component $q_1$, it is a homogeneous polynomial in $X$ and $Y$ of degree $27$.
 Define $M'_1$ to be the sum of all monomials of $\frac{1}{2}q_1$ which coefficients are not divisible by $4$.
 Set $N'_1 = \frac{1}{2}(\frac{1}{2}q_1 - M'_1)$.
 Using the formula~\eqref{eq_M1_new} and substituting there $M_0 = X$, $M_1 = Y$, $\widetilde{g}_2 = \widetilde{g}_3 = 0$ we 
 can see that the leading term in $Y$ of $M'_1$ 
 is equal to~$a_{1200} a_{1101}^{9} a_{0210}^{3} X^{10} Y^{17}$. It does not lie in $2R[X, Y]$ and the degree in $Y$ of $M'_1$ is
 \begin{equation*}
  \deg_Y(M'_1) = 17.
 \end{equation*}
 Consider the component $q_2$. By the formula \eqref{eq_q2_in_I} we see that $q_2$ lies in the ideal $\I$.
 Define $\widetilde{g}'_2$ to be the sum of all monomials of $\frac{1}{2}q_2$ which are not divisible by $4$.
 Set $g'_2 = \frac{1}{2}(\frac{1}{2}q'_2 - \widetilde{g}'_2)$.  Using the formula~\eqref{eq_g2_new} and 
 substituting there $M_0 = X$, $M_1 = Y$, $\widetilde{g}_2 = \widetilde{g}_3 = 0$ we 
 can see that the leading term in $Y$ of $\widetilde{g}'_2$ 
 is equal to~$2w a_{1200}^2 a_{0210} a_{1101}^8 X^{12}  Y^{15}$. Then the degree in $Y$ of $\widetilde{g}'_2$ is
 \begin{equation*}
  \deg_Y(\widetilde{g}'_2) = 15.
 \end{equation*}
 Consider the component $q_3$. By the formula \eqref{eq_q3_in_I} we see that $q_3$ lies in the ideal $\I$.
 Define $\widetilde{g}'_3$ to be the sum of all monomials of $\frac{1}{2}q_3$ which are not divisible by $4$.
 Set $g'_3 = \frac{1}{2}(\frac{1}{2}q_3 - \widetilde{g}'_3)$.  
 Using the formula~\eqref{eq_g3_new} and 
 substituting there $M_0 = X$, $M_1 = Y$, $\widetilde{g}_2 = \widetilde{g}_3 = 0$ we 
 can see that the leading term in $Y$ of $\widetilde{g}'_3$ 
 is equal to~$2w a_{1200}^2 a_{0210}^2  a_{1101}^7 X^{11} Y^{16}$. It does not lie in $4R[X, Y]$ and  the degree in $Y$ of $\widetilde{g}'_3$ is
 \begin{equation*}
  \deg_Y(\widetilde{g}'_3) = 16.
 \end{equation*}
 Thus, one get that $q$ has the form \eqref{eq_preimage_of_XY00} and by the construction 
 polynomials  $M'_0$, $N'_0$, $M'_1$, $N'_1$, $\widetilde{g}'_2$, $g'_2$,
 $\widetilde{g}'_3$ and $g'_3$ satisfy conditions $(\mathrm{B}1 - \mathrm{B}3)$. 
 
 Thus, the point $p$ does not lie in the indeterminacy locus of $F_R^{-1}$. 
 Moreover, by Lemma \ref{lemma_preimage_of_point} the point $q = F_R^{-1}(p)$ does not lie in $\Ind(F_R^{-n})$ for all $n>0$.
 This finishes the proof.
\end{proof}

\begin{corollary}\label{corollary_line_on_vg_cubic}
 Let $Q$ be a very general cubic surface in $\p_{\CC}^3$ and let points $p_1$, $p_2$ and $p_3$ be general points on~$Q$. 
 Assume that $F$ is the birational automorphism of $\p_{\CC}^3$ as in \textup{\eqref{eq_F}}.
 Then the line $L\subset \p_{\CC}^3$ passing through $p_1$ and some point in $\Ind(\sigma_1)\cap\Ind(\sigma_2)$ does 
 not lie in $\Ind(F^{-n})$ for all $n>0$.
\end{corollary}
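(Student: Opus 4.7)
The plan is to reduce the corollary to the universal statement of Corollary \ref{corollary_pointXY00} by a specialization argument on the coefficients of $Q$.

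I would first use Lemma \ref{lemma_coordinates} to normalize coordinates so that $q = (1:0:0:0)$, $p_1 = (0:1:0:0)$, $p_2 = (0:0:1:0)$, and $p_3 = (0:0:0:1)$, where $q$ is the chosen point of $\Gamma_1 \cap \Gamma_2 = \Ind(\sigma_1) \cap \Ind(\sigma_2)$. The line $L$ through $p_1$ and $q$ then becomes $\{x_2 = x_3 = 0\}$, parametrized by $(X:Y) \mapsto (X:Y:0:0)$. By Lemma \ref{lemma_involutions_in_coordinates} the three involutions $\sigma_i$ and their composition $F$ are given by formulas polynomial in the coefficients $\alpha_I$ of the cubic equation. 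In particular, setting $\alpha_I = 0$ for $I$ outside $B$ recovers the universal map $F_R$ of \eqref{eq_F_R} over $R = \ZZ[a_I]_{I\in B}$.

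Next, I would apply Corollary \ref{corollary_pointXY00}: for every $n > 0$, the universal iterate $F_R^{-n}(X : Y : 0 : 0) = (q_0^{(n)} : q_1^{(n)} : q_2^{(n)} : q_3^{(n)})$ has components $q_i^{(n)} \in R[X, Y]$ that are not all identically zero; in fact, the inductive shape of Lemma \ref{lemma_preimage_of_point} supplies distinguished leading coefficients in $Y$ that are non-zero polynomials in the $a_I$'s for $I \in B$. Enlarging the base ring to $R_{\mathrm{full}}$, including indeterminates for all remaining cubic coefficients, and running the same recursion, the full symbolic iterate specializes to $F_R^{-n}(X:Y:0:0)$ upon setting the new indeterminates to zero. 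Since this specialization is non-zero, the full symbolic iterate must itself be non-zero as an element of $R_{\mathrm{full}}[X, Y]^4$.

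Finally, I would specialize to the concrete cubic. For each $n > 0$ pick a component $q_{i_n}^{(n)} \in R_{\mathrm{full}}[X, Y]$ that is non-zero; writing it as $\sum_{j,k} c_{j,k}^{(n)} X^j Y^k$, at least one coefficient $c_{j,k}^{(n)} \in R_{\mathrm{full}}$ is a non-zero polynomial in the cubic parameters. The simultaneous vanishing of all the $c_{j,k}^{(n)}$'s defines a proper Zariski-closed subvariety $Z_n$ of the cubic parameter space. A very general cubic $Q$ with general marked points lies outside the countable union $\bigcup_{n > 0} Z_n$, so for each $n$ the specialization $q_{i_n}^{(n)}|_Q \in \CC[X, Y]$ is a non-zero polynomial, whose zero set on $L \cong \p^1$ is finite. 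Any $(X_0 : Y_0) \in L$ outside these zeros satisfies that $F^{-n}(X_0 : Y_0 : 0 : 0)$ has at least one non-vanishing coordinate, so $F^{-n}$ is defined at this point and $L \not\subset \Ind(F^{-n})$. The only non-routine step is the passage from the restricted universal setting of Corollary \ref{corollary_pointXY00} to the full family of cubics; this is routine since the involutions are defined by formulas polynomial in all $a_I$'s and specialization is a ring homomorphism, so composition commutes with specialization and a non-zero polynomial cannot vanish on a very general point.
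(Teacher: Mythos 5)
Your proof is correct and follows essentially the same route as the paper: normalize coordinates via Lemma \ref{lemma_coordinates}, feed the universal non-vanishing statement of Corollary \ref{corollary_pointXY00} into a specialization argument over the parameter space of normalized cubics, and conclude by avoiding countably many proper closed bad loci. The paper packages the final step geometrically, via the universal cubic $\Q\subset\p(V)\times\p(W)$ and the fact that $\bigcup_{n}\Ind(F_{\Q}^{-n})$ is a countable union of proper closed subsets not containing $L\times\p(W)$, whereas you track explicit coefficient loci $Z_n$ of the symbolic iterates; the content is the same.
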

\begin{proof}
 Consider a $4$-dimensional complex vector space $V$ and associated projective space $\p(V) = \p^3$. A cubic surface in $\p(V)$ is given by an equation in $\p(S^3V^{\vee})$. 
 The coordinates of the space $\p(S^3V^{\vee})$ are coefficients $a_I$ of the cubic equation $\sum a_I x^I$. 
 Denote by $W$ the following subspace of $S^3V^{\vee}$:
 \begin{equation*}
  W = \left\langle a_I|\ I\neq (3000), (0300), (0030), (0003), (2100), (2010)\right\rangle\subset S^3V^{\vee}.
 \end{equation*}
 
 Consider a universal cubic surface $\Q\subset \p(V)\times \p(W)$. Denote by $\Pi\colon \Q\to \p(W)$ the projection.
 The fiber of $\Pi$ over a point $f \in \p(W)$ is a cubic surface $\Q_f = \{ f = 0\}$.
 Moreover, by construction $\Q_f$ contains points $q = (1:0:0:0)$, $p_1 = (0:1:0:0)$, $p_2 = (0:0:1:0)$ and $p_3 = (0:0:0:1)$; 
 lines passing through $p_i$ and $q$ are tangent to $\Q_f$ in $q$ for $i = 1$ and $2$. Note that for any point $f\in \p(W)$ 
 the point $q$ lies in $\Ind(\sigma_1)\cap\Ind(\sigma_2)$ where $\sigma_1$ and $\sigma_2$ are involutions associated with 
 the cubic equation $f$ and points $p_1$ and~$p_2$.
 
  Then we can define involutions $\sigma_{\Q 1}$, $\sigma_{\Q 2}$ and $\sigma_{\Q 3}$ of $ \p(V)\times \p(W)$ by formulas in Lemma \ref{lemma_involutions_in_coordinates}.
 Denote by $F_{\Q}$ the composition $\sigma_{\Q 1}\circ\sigma_{\Q 2}\circ\sigma_{\Q 3}$.
 
 Consider the following cubic equation $f_0$:
 \begin{equation*}
  f_0 = a_{2001}x_0^2x_3 +        a_{1200}x_0 x_1^2 + a_{1110}x_0x_1x_2 + a_{1101}x_0x_1x_3 + a_{0210}x_1^2x_2,
 \end{equation*}
 here coefficients $a_{2001}$, $a_{1200}$, $a_{1110}$, $a_{1101}$, $a_{0210}$ are non-zero complex numbers. 
 By Corollary \ref{corollary_pointXY00} we get that if $X$ and $Y$ are very general complex numbers then the point $(X:Y:0:0)$
 does not lie in the indeterminacy locus of $F^{-n}$ for all $n>0$. 
 
 This implies that the point $((X: Y:0:0), f_0)$ does not lie in $\Ind(F_{\Q}^{-n})$ for all $n>0$. 
 Denote by $L$ the line connecting $q$ and $p_1$. Then the point $(X:Y:0:0)$ lies on $L$ in $\p^3$. Therefore, 
 the subvariety~$L\times \p(W)$ does not lie in $\Ind(F_{\Q}^{-n})$ for all $n>0$. 
 
 Since $\bigcup_{n>0} \Ind(F_{\Q}^{-n})$ is a countable union of closed codimension $2$ subvarieties of $\p(V)\times \p(W)$, then for a very general point $f$
 in $\p(W)$ the line $L$ passing through $p_1$ and $q$ does not lie in $\Ind(F^{-n})$ for all $n>0$.
 
 Finally, since by Lemma \ref{lemma_coordinates} for a set of a cubic surface $Q$ and three general 
 points $p_1$, $p_2$ and $p_3$ there exists a choice of coordinates such that the equation of $Q$ 
 lies in $W$ and $p_1 = (0:1:0:0)$, $p_2 = (0:0:1:0)$ and $p_3 = (0:0:0:1)$ we get the result.
\end{proof}

 \subsection{Proof of Theorem \ref{theorem_blanc_example}}
 We consider the composition of three birational involutions $\sigma_1$, $\sigma_2$ and $\sigma_3$ on $\p^3$ 
 associated with points $p_1,p_2$ and $p_3$ on a smooth cubic surface $Q$.
 Without loss of generality we can assume that they are as in  Lemma \ref{lemma_coordinates}. Then involutions are defined
 by formulas in Lemma \ref{lemma_involutions_in_coordinates}. 
 
 If we prove that for some cubic surface and 3 points on it the composition $F = \sigma_3\circ\sigma_2\circ\sigma_1$ is not regularizable, then
 the same is true for any very general birational automorphism of this type. Thus, we assume that coefficients $a_I$ of the equation $f$ of 
 cubic $Q$ are transcendental and algebraically independent over $\mathbb{Q}$.
 
 We denote by $\delta\colon X\to\p^3$ the consequent blow-up of the proper preimages of curves $\Gamma_1,\Gamma_2$ and $\Gamma_3$.
 By Theorem \ref{thm_blanc} the birational automorphism $F$ induces a pseudo-automorphism $\varphi$ on $X$. By Theorems \ref{thm_blanc} and~\ref{thm_Truong_condition} 
 the class $\theta_1(\varphi)$ is well-defined and we can compute it by Lemma \ref{lemma_theta_1_varphi}.
 
 Consider the curve $L$ on $X$ which is the proper preimage under $\delta$ of the line $\delta(L)$ on $\p^3$ passing through points $p_1$ and $q$.
 By Corollary \ref{corollary_theta_not_nef} we get that $\theta_1(\varphi)\cdot [L]<0$.
 However, by Corollary \ref{corollary_line_on_vg_cubic} we get that  the curve $\delta(L)$ does not lie in $\Ind(F^{-N})$ for all $N>0$. 
 
 This implies that the curve $L$ does not lie in $\Ind(\varphi^{-N})$  
 for infinitely many numbers $N>0$. Thus, Condition \ref{condition} is satisfied for the pseudo-automorphism $\varphi$.
 Then by Theorem \ref{thm_criterion}  we get that there is no birational model of $X$ on which $\varphi$ defines a regular automorphism.
 Moreover, by Theorem \ref{thm_primitivity} this also implies that $\varphi$ does not preserve any fibration over a surface.

\section{Example of a regularizable pseudo-automorphism with non-nef class \texorpdfstring{$\theta_1$}{T\_1}} 
\label{sect_examples}
We recall here the construction from \cite{Oguiso-Truong} and \cite[Section 7]{Lesieutre_constraints} in order to give an example of a regularizable
(not regular) pseudo-automorphism $\varphi_+$ such that the first dynamical class $\theta_1(\varphi_+)$ is not nef.

First let us recall the construction of the regular primitive automorphism of a rational threefold with the dynamical degree greater than 1 described in \cite{Oguiso-Truong}.

In order to construct it we consider the lattice of  Eisenstein integers $\mathbb{Z}[j]$ in $\mathbb{C}$, here $j$ is a primitive cubic root of unity.
Denote by $E$ the elliptic curve $\CC/\ZZ[j]$.
The group  $G = \langle -j \rangle$ isomorphic to $\ZZ/6\ZZ$ acts on $E$ and this action induces a diagonal action of $G$ on the abelian 
variety $A = E\times E\times E$. 

We denote by $q\colon A\to A/G$ the quotient map. Since the action of $G$ was not free, $A/G$ is a singular variety; there are 27 
non-terminal isolated quotient singularities on~$A/G$. Denote by $\delta \colon X \to A/G$ the blow-up of all these points. 
Then $X$ is a smooth rational threefold  by \cite[Theorem 1.4]{Oguiso-Truong}.

Any matrix $M$ in $\SL(3,\ZZ)$ defines a regular automorphism of $A$. Since the action of $\SL(3,\ZZ)$ and $G$ commutes, this action extends 
to a regular action on $A/G$ and on $X$. We fix a matrix $M$ as in  \cite[Lemma 4.3]{Oguiso-Truong}. It is an integer invertible matrix 
such that all roots of its characteristic polynomial are distinct real numbers.
Then $M$ induces primitive regular automorphisms $\varphi_A$ and $\varphi$ of $A$ and $X$ respectively and 
\begin{equation*}
 \lambda_1(\varphi)^2 \geqslant \lambda_2(\varphi).
\end{equation*}

Denote by $C$ a proper image of a curve $E\times\{0\}\times\{0\}$ on $A$ under the finite rational map $\delta\circ q$.
Then~$C$ is a smooth rational curve on $X$.
By \cite[Section 7]{Lesieutre_constraints} there is a standard Atiyah flop in the curve $C$; i.e. there exists a 
pseudo-isomorphism $\alpha\colon X\dashrightarrow X_+$ such that $\Ind(\alpha) = C$ and the total image of $C$ under $\alpha$ is a smooth
rational curve $C_+$. Moreover, if we denote by $p\colon W\to X$ the blow-up of $X$ in $C$, then the exceptional divisor of $p$ is isomorphic to $\p^1\times\p^1$
and the blow-down of another ruling induces a regular birational morphism $p_+\colon W\to X_+$.

\begin{equation*}
 \xymatrix{
  &&& W \ar[ld]_{p} \ar[rd]^{p_+}&\\
 A\ar@(ul,dl)_{\varphi_A} \ar[rd]_{q} && X \ar@(d,dr)_{\varphi} \ar[ld]^{\delta} \ar@{-->}[rr]^{\alpha} && X^+\ar@{-->}@(d,dr)_{\varphi_+}\\
 & A/G &&&
 }
\end{equation*}

By the choice of the matrix $M$ the orbit of the curve $C$ is infinite and the intersection number $\theta_1(\varphi)\cdot [C]$ is strictly positive. 
Then we can apply the following lemma.

\begin{lemma}\label{lemma_intersections_after_flop}
 Let $\alpha\colon X\dashrightarrow X^+$ be a flop in a curve $C$. Then for any $D\in N^1(X)$ such that $ D\cdot [C]>0$ we have~\mbox{$(\alpha^*D)\cdot [C^+]<0$}.
\end{lemma}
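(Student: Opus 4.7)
The plan is to pull the flop back to the common resolution and exploit the symmetric structure of the Atiyah flop. Let $p\colon W\to X$ be the blow-up of $X$ along $C$ with exceptional divisor $E\cong \p^1\times\p^1$, and let $p_+\colon W\to X^+$ be the blow-down of the other ruling, so that $p_+$ is the blow-up of $X^+$ along $C^+$ with the same exceptional divisor $E$. Denote by $f$ a fiber of $p|_E\colon E\to C$ and by $f^+$ a fiber of $p_+|_E\colon E\to C^+$. Since the two rulings are transversal on $E$, one has $f\cdot f^+ = 1$, $p_*[f]=0$, $p_{+*}[f^+]=0$, and crucially each fiber of one ruling is a section of the other contraction, so $p_*[f^+]=[C]$ and $p_{+*}[f]=[C^+]$. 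Standard properties of blow-ups along smooth curves give $E\cdot f = E\cdot f^+ = -1$.

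Next, since $\alpha$ is the pseudo-isomorphism induced by the diagram, one has $\alpha^*D = p_{+*}p^*D \in N^1(X^+)$. Because $p_+$ is a blow-up along a smooth curve, $E$ is the unique divisor supported in $\Exc(p_+)$, and we may write
\begin{equation*}
 p^*D = p_+^*(\alpha^*D) + aE
\end{equation*}
for some integer $a$. The idea is now to determine $a$ by intersecting this identity with $f^+$, and then read off $\alpha^*D\cdot [C^+]$ by intersecting it with $f$.

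For the first intersection, project-formula on the left gives $p^*D\cdot f^+ = D\cdot p_*[f^+] = D\cdot [C]$, while on the right $p_+^*(\alpha^*D)\cdot f^+ = \alpha^*D\cdot p_{+*}[f^+] = 0$ and $aE\cdot f^+ = -a$. Hence $a = -D\cdot [C]$. For the second intersection, $p^*D\cdot f = D\cdot p_*[f] = 0$, $p_+^*(\alpha^*D)\cdot f = \alpha^*D\cdot p_{+*}[f] = \alpha^*D\cdot [C^+]$, and $aE\cdot f = -a$, so $0 = \alpha^*D\cdot[C^+] - a$ and therefore
\begin{equation*}
 \alpha^*D\cdot[C^+] = a = -D\cdot[C] < 0,
\end{equation*}
which is the desired inequality.

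There is no real obstacle in this argument: everything reduces to the standard identities for blow-ups along smooth curves together with the symmetric description of the Atiyah flop through its common resolution $W$. The only point that requires slight care is the sign of $E\cdot f$ and $E\cdot f^+$, for which I would cite the standard formula $\mathcal{O}_E(E)\cong \mathcal{O}_{\mathbb{P}(N)}(-1)$ on the exceptional $\mathbb{P}^1$-bundle, giving degree $-1$ on each fiber.
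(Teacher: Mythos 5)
Your computation is correct and is precisely the ``easy computation'' that the paper leaves implicit: the paper offers no written proof beyond the remark that the lemma is an easy computation for the standard Atiyah flop (the only case actually used there), and your argument via the common resolution $W$, the decomposition $p^*D = p_+^*(\alpha^*D) + aE$ with $a$ determined by intersecting against the two rulings, is the natural way to carry that computation out. Two harmless imprecisions: the assertion $f\cdot f^+ = 1$ is not a well-defined intersection number of curves in the threefold $W$ (it is never used), and $a$ is a priori a real number rather than an integer since $N^1$ is taken with real coefficients; also note your proof, like the paper's remark, covers only the standard Atiyah flop rather than an arbitrary flop in a curve, which suffices for the application.
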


In the case of the standard Atiyah flop this lemma is an easy computation. Since $\theta_1(\varphi_+) = (\alpha^{-1})^*\theta_1(\varphi)$ 
by Lemma \ref{lemma_pb_of_theta} we get the following inequality:
\begin{equation*}
 \theta_1(\varphi_+)\cdot [C_+]<0.
\end{equation*}
By the construction the curve $C_+$ lies in the indeterminacy locus $\Ind(\varphi^n)$ for all non-zero integers $n$. Thus, the pseudo-automorphism~$\varphi_+$ 
satisfies all but last properties of Condition \ref{condition}.

 \appendix
 \section{Computations for Lemma \ref{lemma_preimage_of_point}}
 In the proof of Lemma \ref{lemma_preimage_of_point} we need to compute the preimage of the point in the projective space $\p^3_{R[X, Y]}$ 
 described in Notation \ref{notation_point}. Here is the Sage code we used to perform the computations. 
 We define birational involutions $\sigma_j =$\lstinline{Ij} for $j = 1,2$ and $3$ and their composition $F_R^{-1} =$\lstinline{F_inverse}.
 Variables \lstinline{M0, N0, M1, N1, g2, gg2, g3} and \lstinline{gg3} play roles of $M_0, N_0, M_1, N_1, \widetilde{g}_2, g_2, 
 \widetilde{g}_3$ and $g_3$ respectively. Variables \lstinline{p2} and \lstinline{p3} represents elements $\widetilde{g}_2+2g_2$
 and $\widetilde{g}_3 +2g_3$ of the ideal $\I$.
 Then we apply \lstinline{F_inverse} to the point $p =$\lstinline{(M0+2*N0,M1+2*N1,g2+ 2*gg2,g3+2*gg3)} and get set of for polynomials \lstinline{Q1}.
 Also we apply \lstinline{F_inverse} to the point $p =$\lstinline{(M0+2*N0,M1+2*N1,p2,p3)} and get \lstinline{Q2}. 
\begin{lstlisting}[breaklines]
K.<a_2001, a_1200, a_1110, a_1101, a_0210, M0, M1, N0, N1, g2, gg2, g3, gg3, x0, x1, x2, x3, p2,p3>=ZZ[]

f = a_2001*x0^2*x3 + a_1200*x0*x1^2 + a_1110*x0*x1*x2 + a_1101*x0*x1*x3 + a_0210*x1^2*x2  

dfdx1 =f.derivative(x1)
dfdx2 =f.derivative(x2)
dfdx3 =f.derivative(x3)
    
def I1(x0,x1,x2,x3):
    return x0*dfdx1(x0=x0,x1=x1,x2=x2,x3=x3), x1*dfdx1(x0=x0,x1=x1,x2=x2,x3=x3)- 2*f(x0=x0,x1=x1,x2=x2,x3=x3), x2*dfdx1(x0=x0,x1=x1,x2=x2,x3=x3), x3*dfdx1(x0=x0,x1=x1,x2=x2,x3=x3)
def I2(x0,x1,x2,x3):
    return x0*dfdx2(x0=x0,x1=x1,x2=x2,x3=x3), x1*dfdx2(x0=x0,x1=x1,x2=x2,x3=x3), x2*dfdx2(x0=x0,x1=x1,x2=x2,x3=x3) - 2*f(x0=x0,x1=x1,x2=x2,x3=x3), x3*dfdx2(x0=x0,x1=x1,x2=x2,x3=x3)
def I3(x0,x1,x2,x3):
    return x0*dfdx3(x0=x0,x1=x1,x2=x2,x3=x3), x1*dfdx3(x0=x0,x1=x1,x2=x2,x3=x3), x2*dfdx3(x0=x0,x1=x1,x2=x2,x3=x3), x3*dfdx3(x0=x0,x1=x1,x2=x2,x3=x3) - 2*f(x0=x0,x1=x1,x2=x2,x3=x3)

def F_inverse(x0,x1,x2,x3):
    return(I1(*I2(*I3(x0,x1,x2,x3))))

Q1 = F_inverse(M0+2*N0,M1+2*N1,g2+ 2*gg2,g3+2*gg3)
Q2 = F_inverse(M0+2*N0,M1+2*N1,p2,p3)
\end{lstlisting}
Set of polynomials \lstinline{Q} is exactly the set of polynomials $(q_0, q_1, q_2, q_3)$ from the proof of Lemma \ref{lemma_preimage_of_point}.
To get the result we need several additional functions. The function \lstinline{mod_2I} deletes monomials in a polynomial which lie in 
the ideal $2\I$. 
\begin{lstlisting}
def mod_2I(P):
    Q = 0
    for m in P.monomials():
        c= P.monomial_coefficient(m)
        d2 = m.degree(p2)
        d3 = m.degree(p3)
        if d2+d3==2:
            # here we factor mod (pi*pj) \subset 2I
            Q = Q 
        if d2+d3 ==1:
            # here we factor mod (2pi) \subset 2I
            c2 = c % 2
            Q = Q + c2*m
        if d2+d3 ==0:
            # here we factor mod (8) \subset 2I
            c8 = c % 8
            Q = Q+c8*m
    return Q
\end{lstlisting}
The function \lstinline{mod_4} deletes all monomials in a polynomial which lie in 
the ideal $4R[X, Y]$.
\begin{lstlisting}
 def mod_4(P):      
    Q = 0
    for m in P.monomials():
        c= P.monomial_coefficient(m)
        d2 = m.degree(g2)
        d3 = m.degree(g3)
        dd2 = m.degree(gg2)
        dd3 = m.degree(gg3)
        if d2+d3+dd2+dd3 ==1:
            # here we factor mod (2gi) \subset (4)
            c2 = c % 2
            Q = Q = Q+c2*m
        if d2+d3+dd2+dd3 ==0:
            # here we factor mod (4) \subset (4)
            c4 = c % 4
            Q = Q+c4*m
    return Q 
\end{lstlisting}
The function \lstinline{mod_8} deletes all monomials in a polynomial which lie in 
the ideal $8R[X, Y]$.
\begin{lstlisting}
 def mod_8(P):
    Q = 0
    for m in P.monomials():
        c= P.monomial_coefficient(m)
        d2 = m.degree(g2)
        d3 = m.degree(g3)
        dd2 = m.degree(gg2)
        dd3 = m.degree(gg3)
        if d2+d3+dd2+dd3==2:
            # here we factor mod (2*gi*gj) \subset (8)
            c2 = c % 2
            Q = Q +c2*m
        if d2+d3+dd2+dd3 ==1:
            # here we factor mod (4*gi) \subset (8)
            c4 = c % 4
            Q = Q + c4*m
        if d2+d3+dd2+dd3 ==0:
            # here we factor mod (8) \subset (8)
            c8 = c % 8
            Q = Q+c8*m
    return Q    
\end{lstlisting}
The function \lstinline{leading_term_M1} returns the leading term in $M_1$ of the polynomial.
\begin{lstlisting}
 def leading_term_M1(P):
    Q=0
    d = P.degree(M1)
    for m in P.monomials():
        if d == m.degree(M1):
            c = P.monomial_coefficient(m)
            Q = Q + c*m
    return Q
\end{lstlisting}
The last thing we need is a function \lstinline{factorization} which factors polynomials into a product of irreducible polynomials. 
Unfortunately the computation above results in so-called symbolic functions, so to factor them we transform the symbolic function 
into a polynomial and then use the factorization in polynomials.
\begin{lstlisting}[breaklines]
b_2001, b_1200, b_1110, b_1101, b_0210, Z0, Z1, W0, W1,  f0, f1, f2, f3, ff2, ff3, q2,q3 = PolynomialRing(RationalField(), 17, ['a_2001', 'a_1200', 'a_1110', 'a_1101',  'a_0210', 'M0', 'M1', 'N0', 'N1', 'g0', 'g1', 'g2', 'g3', 'gg2', 'gg3', 'p2', 'p3']).gens()
 
def factorization(P):
    Q=0
    for m in P.monomials():
        c= P.monomial_coefficient(m)
        d2001 = m.degree(a_2001)
        d1200 = m.degree(a_1200)
        d1110 = m.degree(a_1110)
        d1101 = m.degree(a_1101)
        d0210 = m.degree(a_0210)
        dx0 = m.degree(M0)
        dx1 = m.degree(M1)
        dy0 = m.degree(N0)
        dy1 = m.degree(N1)
        d2 = m.degree(g2)
        d3 = m.degree(g3)
        dd2 = m.degree(gg2)
        dd3 = m.degree(gg3)
        dp2 = m.degree(p2)
        dp3 = m.degree(p3)
        Q = Q+ c * b_2001^d2001 * b_1200^d1200 * b_1110^d1110 * b_1101^d1101 * b_0210^d0210 * f2^d2 * f3^d3 * ff2^dd2 *ff3^dd3 * Z0^dx0 * Z1^dx1 * W0^dy0 *W1^dy1 * q2^dp2 * q3^dp3
    return Q.factor()
\end{lstlisting}
Now we are ready to get results:
\begin{lstlisting}
M0new = mod_4(Q1[0])
M1new = mod_4(Q1[1])
Q2_new = mod_2I(Q2[2])
Q3_new = mod_2I(Q2[3])
G2new = mod_8(Q1[2])
G3new = mod_8(Q1[3])


print('Leading term of 2*M_0_new = ', factorization(leading_term_M1(M0new)))
print('Leading term of 2*M_1_new = ', factorization(leading_term_M1(M1new)))
print('Q2_new modulo 2*I = ', factorization(Q2_new))
print('Q3_new modulo 2*I = ', factorization(Q3_new))
print('Leading term of 2*G2new = ', factorization(leading_term_M1(G2new)))
print('Leading term of 2*G3new = ', factorization(leading_term_M1(G3new)))
\end{lstlisting}
We get six lines of results. The first line produces the leading term in $M_1$ for $2M_0'$, 
we use this formula in \eqref{eq_M0_new}. The second line produces the leading term in $M_1$ for $2M_1'$, 
we use this formula in \eqref{eq_M1_new}. The third line produces the expression for $q'_2$, we use it in \eqref{eq_q2_in_I}.
The fourth line produces the expression for $q'_3$, we use it in \eqref{eq_q3_in_I}.
The fifth and sixth lines produce the leading terms in $M_1$ of $\widetilde{g}'_2$ and $\widetilde{g}'_3$ respectively,
we use them in \eqref{eq_g2_new} and \eqref{eq_g3_new}.

\bibliographystyle{alpha}
\bibliography{dynamics}

\end{document}